\documentclass[a4paper]{amsart}

\newtheorem{theorem}{Theorem}[section]
\newtheorem{lemma}[theorem]{Lemma}
\newtheorem{proposition}[theorem]{Proposition}

\newtheorem{corollary}[theorem]{Corollary}
\usepackage{dsfont, comment, bm}

\theoremstyle{definition}
\newtheorem{definition}[theorem]{Definition}

\theoremstyle{remark}
\newtheorem{remark}[theorem]{Remark}

\numberwithin{equation}{section}

\newcommand{\I}{{\mathds {1}}}
\newcommand{\M}{\mathcal{M}}
\newcommand{\N}{\mathcal{N}}
\newcommand{\tM}{\widetilde{\mathcal{M}}}
\newcommand{\decra}{\mu}

\begin{document}

\title{A noncommutative transfer principle}

\author{Louis Labuschagne}
\address{DSI-NRF CoE in Math. and Stat. Sci, Pure and Applied Analytics\\ NWU, PVT. BAG X6001, 2520 Potchefstroom, South Africa}
\email{louis.labuschagne@nwu.ac.za}
\author{Claud Steyn}
\address{DSI-NRF CoE in Math. and Stat. Sci, Pure and Applied Analytics\\ Internal Box 209, School of Math. \& Stat. Sci.,
NWU, PVT. BAG X6001, 2520 Potchefstroom, South Africa}
\email{claud.steyn@nwu.ac.za}
\subjclass[2020]{Primary 37A15, 37A30, 46L52, 46L55; Secondary 22F10, 37A46, 37A55, 46E30}
\date{\today}
\thanks{}
\begin{abstract}  We extend the generalised Calderon Transfer Principle as presented in \cite{dBL} to the setting of trace preserving group actions of $\sigma$-compact 
locally compact Hausdorff groups on semifinite von Neumann algebras $\M$. At its essence the transfer principle consists of showing that a large class of regular operators 
in the group context may be translated to operators in the algebra context by means of this group action. The end result is a protocol for proving ergodic convergence 
results for group actions on noncommutative Orlicz space of semifinite von Neumann algebras. We start with proving the existence of the transferred operator in the 
noncommutative context. Two tools are developed for the purpose of actually proving convergence results: (1) a theory of what may be called 2-variable decreasing 
rearrangements for the algebra $L^\infty \overline{\otimes} \M$ (where ($\Gamma,\nu$) is a Radon measure space) and (2) a concepts of maximal operators suited to the present 
context. These tools are then used to lift the ergodic convergence results presented in \cite{dBL} to the noncommutative setting. In closing we present examples illustrating 
the application of the tools 
developed.\end{abstract}

\maketitle

\tableofcontents

\section{Preliminaries}

Calderon first conceived of the transfer principle in his iconic 1968 paper \cite{Cal} dealing with ergodic theorems for actions of the real numbers on $\sigma$-finite 
measure spaces. Since then much has been written about this topic. Amongst these the genesis of the current paper may however be found in the 2016 paper of de Beer and the 
first named author \cite{dBL} in which the transfer principle was formalised and its efficacy for integrable actions of locally compact abelian groups on measure spaces 
demonstrated. This paper in particular demonstrated the efficacy of this principle for obtaining pointwise ergodic theorems for Orlicz spaces. Then in 2024 the first-named 
author reworked the de Beer-Labuschagne \cite{dBL} approach to produce a revised approach more suited to possible future extension to quantum measure theory and Orlicz 
spaces (see \cite{L-QM}). The actual extension of this theory to the quantum setting is then the goal of this paper. In our study we shall be working with locally compact 
groups $G$ equipped with a left-invariant Haar measure and write $e$ for the group unit. For the sake of access to Theorem \ref{Mal-nc}, which in turn relies on 
\cite[Theorem 3]{Mal}, we require our groups to be $\sigma$-compact (ensuring $\sigma$-finiteness of the Haar measure). This is however a very mild and prefectly acceptable 
restriction. Unless otherwise stated we shall assume the groups we are working with to be multiplicative. Given a group action on some measure space and a dynamical process in 
the group context described by some sequence $(T_n)$ of operators, the basic goal of the transfer principle is to use the group action as a tool for transferring the operators 
$(T_n)$ to a matching sequence of operators $(T^\#_n)$ in the measure space context. Once this is a done, a form of the Banach principle is used to extract convergence 
properties of the sequence $(T^\#_n)$ from maximal inequalities inherited from the sequence $(T_n)$, the convergence of which then ultimately enables us to obtain ergodic 
convergence theorems.

The established formalism for the description and study of noncommutative spaces of measurable functions are von Neumann algebras. Von Neumann algebras $\M$ are in essence 
weak operator closed subalgebras of the space $B(H)$ of bounded operators on a Hilbert space which are closed under the Hilbert adjoint. We hasten to add that von Neumann 
algebras can also be defined abstractly as the class the class of $C^*$-algebras that are also Banach dual spaces in which case the term $W^*$-algebras is preferred. 
However ultimately the von Neumann algebras and $W^*$-algebras (abstract von Neumann algebras) turn out to be equivalent.  Each von Neumann algebra can be equipped with an 
affine map $\nu$ from the cone $\M_+$ of positive elements to the extended half-line $[0,\infty]$ which is 
\begin{itemize}
\item \emph{faithful} in the sense that for any $f\in\M_+$ we have $\nu(f)=0$ if and only if $f=0$;
\item \emph{normal} in the sense that if $(f_\alpha)\subset \M_+$ increases monotonically to $f$, then $\nu(f_\alpha)\nearrow\nu(f)$;
\item and \emph{semifinite} if the set $\{a\in \M\colon \nu(a^*a)<\infty\}$ is $\sigma$-weakly dense in $\M$. 
\end{itemize}
These faithful semifinite normal weights (or just fns for short) play the role of a noncommutative integral. Weights satisfying the additional regularity constraint that 
$\nu(a^*a)=\nu(aa^*)$ for each $a\in \M$ are called traces and typically denoted by $\tau$. In fact the class of semifinite von Neumann algebras are characterised by the 
existence of precisely such fns traces. The identification of von Neumann algebras as the appropriate formalism for noncommutative measure and integration theory is based on 
the fact that the category of commutative von Neumann algebras equipped with a faithful normal semifinite trace is equivalent to the category of localizable measure algebras 
(see \cite[\S 1.8]{GLbook}).  We shall here restrict attention to semifinite von Neumann algebras equipped with faithful normal semifinite traces. In the construction of the 
transferred operator $T^\#$ we shall make use of the projective tensor product of $L^1$ spaces with Banach spaces and also tensor products of von Neumann algebras with a 
specific focus on the case when one of the factors is a commutative von Neumann algebra. We shall assume that readers are familiar with the essentials of the projective and 
injective tensor product. As far as von Neumann algebras are concerned we point out that the family of commutative von Neumann algebras equipped with an fns trace are also 
equivalent to the family of $L^\infty(\Gamma,\nu)$ spaces where $(\Gamma,\Sigma,\nu)$ is a Radon measure spaces (see \cite[Theorem 1.54]{GLbook}). On this point we note that 
a locally compact group $G$ equipped with left-invariant Haar measure is in fact a Radon measure space. Takesaki's exposition of tensor products of $W^*$-algebras with such 
$L^\infty(\Gamma,\nu)$ spaces (see \cite[\S IV.7]{Tak} is therefore directly relevant for us. We shall expound the salient details of these tensor products below.

As mentioned our goal is to obtain a theory which is efficacious for not just $L^p$, but also Orlicz spaces. The description of these and more general spaces requires some 
knowledge of the theory of decreasing rearrangements for semifinite von Neumann algebras. We here give a very brief overview. For a fuller introduction please refer to 
\cite[Chapters 5-7]{GLbook}. The first fact one needs to note is that any concrete von Neumann algebra $\M\subset B(H)$ equipped with a faithful semifinite trace 
$\tau$ may be enlarged to the so-called set of closed densely defined affiliated operators. A densely defined closed operator $a$ on a Hilbert space $H$ is said to be 
affiliated to $\M$ (denoted by $a\eta\M$) if it commutes with every unitary in the commutant of $\M$, or equivalently if in the polar decomposition $a=u|a|$, $u$ as well as 
each spectral projection of $|a|$ belongs to $\M$. We are however particularly interested in those affiliated operators $a$ for which the spectral projection 
$\chi_{(\lambda,\infty]}(|a|)$ will for some $\lambda>0$ have finite trace. Under strong sum and product this class turns out to be a complete metrisable algebra known as 
algebra of $\tau$-measurable operators and denoted by $\tM$. 

We will write $\mathbb{R}_+$ for the closed half-line and denote the associated Lebesgue measure by $m$. For each densely defined closed operator $a$ affiliated to $\M$, one 
may define an associated distribution function $\mathbb{R}_+\to [0,\infty]:t\mapsto d_a(t)$ by the prescription $d_a(t)=\tau(\chi_{(t,\infty)}(|a|))$. The algebra $\tM$ then 
consists of all densely defined closed affiliated operators $a$ for which the associated distribution is ultimately finite-valued. This algebra is then equipped with a 
tracial topology of ``convergence in measure'' under which it proves to be a complete metric *-algebra. (See \cite[\S 3.5]{GLbook} for details.) The classical counterpart 
of this algebras is just the space $M^0(\Omega,\Sigma,\nu)$ of all almost everywhere finite measurable functions equipped with the topology of convergence in measure (see 
\cite[Exercise 1.1]{BS}).  

For each element $a$ of $\tM$ one may then define an associated ``decreasing rearrangement'' $\decra(\cdot,a):\mathbb{R}_+\to \mathbb{R}_+:t\mapsto \decra(t,a)$ by the 
prescription $\decra(t,a)=\inf\{s\geq 0\colon d_a(s)\leq t\}$. (Here $a.e$ denotes the strong product.) There are other ways of defining the decreasing rearrangement, but 
this one is most convenient for us as it clearly shows that this definition is a direct translation of the classical decreasing rearrangement $f^*$ of a function 
$f\in L^\infty(\Omega,\Sigma,\nu)$ which is defined by the prescription $f^*(t)= \inf\{s\geq 0\colon \nu(\{w\in\Omega\colon |f(w)|>s\}), \leq t\}$. This noncommutative 
decreasing rearrangement then inherits much of the elegance of its classical counterpart which is what makes the theory of noncommutative rearrangement invariant Banach 
function spaces possible. For details on Banach function spaces and decreasing rearrangements in the classical context, please refer to \cite{BS, KPS}. The approach of 
\cite{KPS} is slightly more general than that of \cite{BS}. We shall for now however follow the approach of \cite{BS}. For our purposes it is enough to restrict ourselves to 
the measure space $(\mathbb{R}_+,m)$. The description below is a truncated and simplified version of a similar description given in \cite{L-QM}. We repeat it for the 
sake of completeness. In the Bennett-Sharpley approach \cite[Definition 1.1.1 \& Theorem 1.1.4]{BS} a (Banach) function norm $\bm{\rho}$ on the cone $M^0_+(\mathbb{R}_+,m)$ 
of non-negative almost everywhere finite measurable functions is defined to be a mapping $\bm{\rho} : M^0_+ \to [0, \infty]$ satisfying
\begin{itemize}
\item $\bm{\rho}(f) = 0$ iff $f = 0$ a.e.  
\item $\bm{\rho}(c f) = c\bm{\rho}(f)$ for all $f \in M^0_+, c > 0$.
\item $\bm{\rho}(f + g) \leq \bm{\rho}(f) + \bm{\rho}(g)$ for all $f$, $g$.
\item $f \leq g$ implies $\bm{\rho}(f) \leq \bm{\rho}(g)$ for all $f, g \in M^0_+$, with $\bm{\rho}(f_n)\nearrow\bm{\rho}(f)$ 
if $f_n \nearrow f$ $m$-a.e.
\item $m(E)<\infty$ implies $\bm{\rho}(\chi_E)<\infty$.
\item For any $E\in \Sigma$ with $m(E)<\infty$ there exists $0<C_{E,\bm{\rho}}<\infty$ such that $\int_Ef\,dm\leq C_{E,\bm{\rho}}\bm{\rho}(f)$.
\end{itemize}
Such a $\bm{\rho}$ may be extended to all of $M^0(\mathbb{R}_+,m)$ by setting $\bm{\rho}(f) = \bm{\rho}(|f|)$, in which case we may then define $L^{\bm{\rho}}(\mathbb{R}_+)$ 
to be the space $L^{\bm{\rho}}(\mathbb{R}_+) = \{f \in M^0(\mathbb{R}_+,m) : \bm{\rho}(f) < \infty\}$ equipped with the norm $\|f\|_{\bm{\rho}}= \bm{\rho}(|f|)$. We shall 
refer to this space as the Banach Function Space corresponding to $\bm{\rho}$. These spaces all satisfy a version of Fatou's lemma, which in turn ensures their completeness 
\cite[Lemma 1.1.5]{BS}. 

Two measurable functions $f,g \in M^0(\mathbb{R}_+,m)$ are said to be equimeasurable if their distribution functions agree. A Banach function norm $\bm{\rho}$ is in turn said 
to be rearrangement invariant if $\bm{\rho}(f)=\bm{\rho}(g)$ whenever $f$ and $g$ are equimeasurable. A Banach function space produced by a rearrangement invariant norm is 
similarly called rearrangement invariant. The Luxemburg representation theorem \cite[Theorem 2.4.10]{BS} informs us that any rearrangement invariant Banach function space 
produced by some rearrangement invariant Banach function norm $\bm{\rho}$ on any $\sigma$-finite measure space $(\Omega,\Sigma,\nu)$ which is either non-atomic or fully 
atomic with all atoms having equal measure, may be reconstructed using decreasing rearrangements of the elements of this space and a matching Banach function norm 
$\widetilde{\bm{\rho}}$ on $M^0(\mathbb{R}_+,m)$. Algebraically the space turns out to be exactly the set $\{f\in M^0(\Omega,\nu)\colon f^*\in 
L^{\widetilde{\bm{\rho}}}(\mathbb{R}_+)\}$ with the norm given by $\bm{\rho}(f)=\widetilde{\bm{\rho}}(f^*)$. In their 1989 paper \cite{DDdP} Dodds, Dodds and de Pagter showed 
that for any rearrangement invariant Banach function norm $\bm{\rho}$ of the above type, the set $\{a\in\tM\colon \decra(\cdot,a)\in L^{\bm{\rho}}\mathbb{R}_+\}$ is a linear 
space for which $\|a\|_{\bm{\rho}} = \bm{\rho}(\decra(\cdot,a))$ is a norm under which this space is in fact a Banach space. On the basis of the intuition gained from the 
Luxemburg representation theorem we may justifiably regard such spaces as noncommutative Banach function spaces. Remarkably, in his 1937 paper on norms on matrix algebras 
\cite{vNA}, the work of John von Neumann already anticipated this theory of noncommutative Banach function spaces.

Every rearrangement Banach function space $L^{\bm{\rho}}(\mathbb{R}_+)$ admits a so-called K\"othe dual ${L^{\bm{\rho}}}'$ (also known as the 
associate space) which is defined to be the space ${L^{\bm{\rho}}}'(\mathbb{R}_+) =\{f\in M^0(\mathbb{R}_+)\colon fh\in L^1(\mathbb{R}_+)\mbox{ for all }g\in 
L^{\bm{\rho}}(\mathbb{R}_+)\}$ and which proves to be a rearrangement Banach function space when equipped with the norm $\bm{\rho}'$ on $M^0(\mathbb{R}_+)$ given by 
$\bm{\rho}'(f)=\sup\{\int |fg|\,dm\colon f\in M^0_+, \bm{\rho}(f)\leq 1\}$. The norm $\bm{\rho}'$ is itself also a rearrangement invariant Banach function norm. We also learn 
from \cite[Theorem 1.2.7]{BS} that Banach function spaces constructed from the fairly regular rearrangement invariant Banach function norms described above are all ``K\"othe 
reflexive''. We refer the reader to \cite[Subsection 6.3.1]{GLbook} for a brief resum\'e of K\"othe duality for the noncommutative variants of the above spaces. As was shown 
by Dodds, Dodds and de Pagter \cite{DDdP-Kothe} all of this theory carries over to the noncommutative context.   

A slightly more general but still very elegant class of rearrangement invariant Banach function spaces is the class produced by the so-called monotone Riesz-Fischer norms 
(see \cite[Definition 5.1.16 \& Theorem 5.1.17]{BS}, \cite[\S 7.2]{GLbook}) for details. These spaces characterise precisely the exact interpolation spaces of the pair 
$(L^\infty, L^1)$ in even the quantum world of semifinite von Neumann algebras. This fact then also shows that for any semifinite von Neumann algebra there is a largest and 
smallest rearrangement invariant Banach function space namely $(L^1+L^\infty)$ and $(L^1\cap L^\infty)$ respectively. We shall make regular use of these spaces and for the 
sake of convenience introduce the shorthand notation $(L^1+L^\infty)=L^{1+\infty}$ and $(L^1\cap L^\infty)=L^{1\cap\infty}$. All the spaces we are interested in are 
of the type defined earlier. The monotone Riesz-Fischer spaces are therefore merely mentioned for the sake of context. 

We proceed with describing the spaces we are primarily interested in. Here we shall again closely follow the corresponding discussion in \cite{L-QM}. Rearrangement invariant 
Banach function spaces $L^{\bm{\rho}}$ have the property that for any two finitely measurable sets $E_1$ and $E_2$ with the same measure, we have that 
$\|\chi_{E_1}\|_{\bm{\rho}}= \|\chi_{E_2}\|_{\bm{\rho}}$. This then enables one to introduce the notion of a \textit{fundamental function} for each such space. Given such a 
space $L^{\bm{\rho}}$, the corresponding fundamental function $\varphi_{\bm{\rho}}$ is given by the formula $$\varphi_{\bm{\rho}}(t)=\|\chi_E\|_X \mbox{ for all } E\in \Sigma 
\mbox{ with } \nu(E)=t$$where the domain of $\varphi_{\bm{\rho}}$ consists of all $t\in (\mathbb{R}_+)$ for which there is some measurable set $E$ with $\nu(E)=t$. Readers 
may find an excellent survey of fundamental functions in \cite[\S 2.5]{BS}. We mention a few of the essentials. One fact worthy of noting is that for any rearrangement 
invariant Banach function space the fundamental functions of the pair $(L^{\bm{\rho}}, (L^{\bm{\rho}})')$ satisfy the equality 
$\varphi_{\bm{\rho}}(t)\varphi_{\bm{\rho}'}(t)=t$ for all admissible values of $t\geq 0$ \cite[Theorem 2.5.2]{BS}. We may think of $\varphi_{\bm{\rho}'}$ as the dual 
fundamental function of $\varphi_{\bm{\rho}}$. It further follows that these functions correspond exactly to the class of so called \textit{quasi-concave} functions, namely 
functions $\varphi$ which are 0 only at 0, non-decreasing, and with 
$t\to \frac{\varphi(t)}{t}$ non-increasing. 

On the basis of the Luxemburg representation theorem and its quantum analogue, we may restrict ourselves to fundamental functions corresponding to Banach function spaces on 
$(\mathbb{R}_+,m)$ and hence assume all fundamental functions to be defined on all of $\mathbb{R}_+$. For future reference we also note that although fundamental 
functions may not actually be concave, any fundamental function is always equivalent to a concave fundamental function \cite[Proposition 2.5.10]{BS}. Any rearrangement 
invariant Banach function space on a $\sigma$-finite measure space which is either non-atomic, or fully atomic with all atoms having equal measure, may correspondingly be 
equivalently renormed in such a way that the fundamental function is in fact concave \cite[Proposition 2.5.10]{BS}. 

There may be several rearrangement invariant Banach function spaces corresponding to any specific quasi-concave function $\varphi$, but in the family of rearrangement 
invariant Banach function spaces described by such a function, there is for the measure space $(\mathbb{R}_+,m)$ always a largest and 
smallest space with $\varphi$ as fundamental function - the so-called maximal and minimal Lorentz spaces corresponding to $\varphi$ (see \cite[Corollary 2.5.14]{BS} and the 
comment following it). Assuming $\varphi$ to actually be concave, these spaces are respectively defined by $$M_\varphi(\mathbb{R}_+,m)= 
\{f\in M^0\colon \sup_{0<t<\infty}\frac{\varphi(t)}{t}\int_0^t f^*(s)\,ds<\infty\}$$and $$\Lambda_\varphi(\mathbb{R}_+,m)=\{f\in M^0\colon 
\int_0^\infty f^*(s)\,d\varphi(s)<\infty\}$$with the norms given by $\|f\|_{M_\varphi}= \sup_{0<t<\infty}\frac{\varphi(t)}{t}\int_0^tf^*(s)\,ds$ and $\|f\|_{\Lambda_\varphi} 
= \int_0^\infty f^*(s)\,d\varphi(s)$. Given a fundamental function $\varphi$ there is in fact a larger space we may construct from this function, namely 
$$M^*_\varphi(\mathbb{R}_+,m)= \{f\in M^0\colon \sup_{0<t<\infty}\varphi(t)f^*(t)<\infty\},$$but this space is in general at best a quasi-normed space (with the quasi-norm 
given by $q_\varphi(f)=\sup_{0<t<\infty}\varphi(t)f^*(t)$).

In the family of rearrangement invariant Banach function spaces we are particularly interested in Orlicz spaces. Each Orlicz space is defined in terms of a \emph{Young 
function} by which term we mean a convex function $A : [0, \infty) \to [0, \infty]$ satisfying $A(0) = 0$ and $\lim_{u \to \infty} A(u) = \infty$, which is neither 
identically zero nor infinite valued on all of $(0, \infty)$, and which is left continuous at $b_A = \sup\{u > 0 : A(u) < \infty\}$. We will hereafter consistently write $A$, 
$B$, $C$, and $X$, $Y$, $Z$ for Young functions. It is worth pointing out that any Young function must also be increasing, and continuous on $[0, b_A]$. The right-continuous 
inverse $A^{-1}: [0, \infty) \to [0, \infty]$ of a Young function is given by the formula $$A^{-1}(t) = \sup\{s : A(s) \leq t\}.$$

There are various growth conditions by means of which Young functions are classified in the literature. One very common growth condition which will be relevant for us, is the 
so-called $\Delta_2$ condition. A Young function $A$ is said to \emph{satisfy $\Delta_2$ globally} if there exists a constant $c>0$ such that $A(2s)\leq cA(s)$ for all 
$s\geq 0$. We further note that each Young function $A$ induces a complementary Young function $A^*$ which is defined by $A^*(u) = \sup_{v > 0}(uv - A(v))$. The pair 
$(A,A^*)$ satisfy the following Hausdorff-Young inequality: $$st \leq A(s) + A^*(t) \qquad s,t \geq 0.$$

The Orlicz space $L^A(\mathbb{R}_+,m)$ associated with $A$ is defined to be the set $$L^{A} = \{f \in M^0(\mathbb{R}_+,m) : A(\lambda |f|) \in L^1 \quad 
\mbox{for some} \quad \lambda = \lambda(f) > 0\}.$$This space turns out to be a linear subspace of $M^0(\mathbb{R}_+,m)$ which proves to be a Banach space when 
equipped with the so-called \emph{Luxemburg-Nakano norm} $$\|f\|_A = \inf\{\lambda > 0 : \|A(|f|/\lambda)\|_1 \leq 1\}.$$An equivalent norm (the \emph{Orlicz norm}) is given 
by the formula $$\|f\|^O_A = \sup\{|\textstyle{\int} fg\, dm| : g\in L^{A^*}, \|g\|_{A^*}\leq 1\}.$$As Banach function spaces the space $(L^{A^*},\|\cdot\|^O_{A^*})$ 
is precisely the K\"othe dual of $(L^A,\|\cdot\|_A)$ \cite[Corollary 4.8.15]{BS}. From our discussion of Banach function spaces it therefore follows that the fundamental 
function of the space $(L^{A^*},\|\cdot\|^O_{A^*})$, is nothing but $\varphi_A'$. The Lebesgue $L^p$ spaces (with $1\leq p<\infty$) are of course just Orlicz spaces 
corresponding to the Young function $\psi(t)=t^p$. Although the notational conventions for Banach function spaces, Orlicz spaces and Lebesgue spaces do differ somewhat, we 
will regardless abide by well established conventions trusting that in each case the context will clarify which spaces are in view.

For Orlicz spaces $L^A$ equipped with the Luxemburg norm, one has access to a very elegant formula for the fundamental function of $L^A$, namely the fact that  
\begin{equation}\label{fundlux}
\varphi_A(t)=\frac{1}{A^{-1}(1/t)}
\end{equation}
(see \cite[Lemma 4.8.17]{BS}.) Thus the fundamental function can be constructed from the given Young function. However more is true. Correspondingly in the case where 
$\varphi$ is a concave fundamental function, it is an exercise to see that $A(t)=\frac{1}{\varphi^{-1}(1/t)}$ then defines a Young function for which 
$\frac{1}{A^{-1}(1/t)}=\varphi(t)$. So up to equivalence with concave fundamental functions, there is an almost bijective correspondence between fundamental functions and 
Young functions. When dealing with an Orlicz space $L^A$, we may therefore justifiably, where convenient, denote the smallest and largest rearrangement invariant spaces 
corresponding to $\varphi_A$ by $\Lambda_A$ and $M_A$.

Given a pair of Young functions $(A,B)$, we shall say that a linear operator $T$ defined on $L^{1+\infty}(\mathbb{R}_+,m)$ is of \emph{strong type $(A,B)$} if it 
restricts to a bounded operator from $L^A$ to $L^B$ and of \emph{weak type $(A,B)$} if instead it restricts to a bounded operator from $L^A$ to $M_B$. Some authors define 
weak type in terms $M^*_B$ instead of $M_B$, but for technical reasons we shall prefer $M_B$. 

In our analysis we shall have occasion to make use of Orlicz-Bochner spaces and also mixed norm spaces. For the sake of the reader we repeate a brief survey of the 
essentials as presented in \cite{L-QM}. Given a measure space $(\Omega,\Sigma,\mu)$ and a Banach space $\mathcal{X}$, a vector valued function $F:\Omega\to \mathcal{X}$ is 
said to be \emph{Borel measurable} if it is measurable with respect to $\Sigma$ and the Borel $\sigma$-algebra of $\mathcal{X}$. The function $F$ is said to be {strongly 
measurable} if it is both Borel measurable and separably valued. It is not difficult to see that if $F$ is strongly measurable, then $t\to \|F_t\|_{\mathcal{X}}$ is a 
$\mu$-measurable real-valued function. (See the introduction to Appendix E of \cite{Cohn}.) For a given Young function $A$, the space $L^A((\Omega,\Sigma,\mu),\mathcal{X})$ 
is defined to be the space of all (equivalence classes of) strongly measurable functions $F$ from $\Omega$ to $\mathcal{X}$ for which the function $t\to 
\|F_t\|_{\mathcal{X}}$ belongs to $L^A(\Omega,\Sigma,\mu)$, with the norm given by $\|F\|_{A,\mathcal{X}} = \|\,\|F(\cdot)\|_{\mathcal{X}}\,\|_A$. It is well-known and very 
useful fact that in the specific case of $L^1$, the tensor product $L^1(\Omega,\Sigma,\mu)\otimes_\pi E$ is for any Banach space $E$ an isometric copy of the Bochner space 
$L^1((\Omega,\Sigma,\mu);E)$.

A closely related concept, is that of a \emph{mixed norm space}. For any two rearrangement invariant Banach function spaces $\mathcal{X}$ and $\mathcal{Y}$ on resonant 
measure spaces $(\Omega_1,\Sigma_1, \mu_1)$ and $(\Omega_2,\Sigma_2, \mu_2)$ respectively, the mixed norm space $\mathcal{X}[\mathcal{Y}]$ (in the sense of Benedek and 
Panzone \cite{BP}) is defined to be the space of all $\mu_1\times\mu_2$-measurable functions $(x,y)\to f(x,y)$ on $\Omega_1\times\Omega_2$ for which 
$x\to \|f_x\|_{\mathcal{Y}}$ belongs to $\mathcal{X}$. (Here $f_x$ is for any fixed $x$ of course the function $y\to f(x,y)$. The canonical norm on such a mixed norm space 
$\mathcal{X}[\mathcal{Y}]$ is just $\|f\|_{\mathcal{X}[\mathcal{Y}]} = \|\,\|f_x\|_{\mathcal{Y}}\,\|_{\mathcal{X}}$. The reader will find a very useful discussion of such 
spaces in \cite{Blo}. In the case where $\mathcal{X}=L^A$ for some Young function, the Bochner space $L^A(\Omega_1,\mathcal{Y})$ may also be regarded as a space of 
$\mu_1\times\mu_2$-measurable functions $(x,y)\to f(x,y)$ where each vector-valued function $F\in L^A(\Omega_1,\mathcal{Y})$ corresponds to the function $f_F: 
\Omega_1\times\Omega_2$ given by $f_F\colon(x,y)\to [F(x)](y)$. With $L^A(\Omega_1,\mathcal{Y})$ represented in this form, the $\mu_1$-measurability of 
$x\to \|F(x)\|_{\mathcal{Y}}$ then ensures that $L^A(\Omega_1,\mathcal{Y})$ is isometrically a subspace of $L^A[\mathcal{Y}]$. 

\section{Introduction}
\subsection{A brief synopsis of the algebra $L^\infty(\Gamma)\overline{\otimes}\M$}\label{vNA_MG}

Given a Radon measure space $(\Gamma,\nu)$, we shall need some insight into the structure of the von Neumann algebra $L^\infty(\Gamma)\overline{\otimes}\M$ and its predual as 
background for the construction of the quantum transfer operator. (We remind the reader that $L^\infty(\Gamma)$ may be represented as a concrete von Neumann algebra by 
identifying it with algebra of multiplication operators on $L^2(\Gamma)$ with symbols in $L^\infty(\Gamma)$.)  We therefore pause to review the theory of the von Neumann 
algebra $L^\infty(\Gamma)\overline{\otimes}\M$ . Regarding this point it should be noted that not only is any commutative von Neumann algebra a *-isomorphic copy of some 
$L^\infty(\Gamma)$, but that any locally compact group equipped with left-invariant Haar measure is itself of course a Radon measure space \cite[Chapter 1]{GLbook}. 

By \cite[Theorem IV.7.17]{Tak} the elements of $L^\infty(\Gamma)\overline{\otimes}\M$ are $\M$-valued bounded 
functions on $\Gamma$ which are measurable in the sense of \cite[Definition IV.7.7]{Tak} and is normed by the norm $\|{a}\|=\sup_{s\in \Gamma}\|a(s)\|$. By 
\cite[Corollary IV.7.8]{Tak} the *-algebra operations are defined in a pointwise fashion, specifically
$$({a}{b})(s)={a}(s){a}(s)\quad({a}+{b})(s)={a}(s)+{a}(s), ({a}^*)(s)={a}(s)^* \mbox{ for all }s\in \Gamma.$$It is clear from the above that projections ${p}$ in 
$L^\infty(\Gamma)\overline{\otimes}\M$ are those $\M$-valued bounded measurable functions on $\Gamma$ for which each ${p}(s)$ ($s\in \Gamma$) is again a projection in $\M$. 

It now follows from the above that we will for any self-adjoint element ${a}\in L^\infty(\Gamma)\overline{\otimes}\M$ and any polynomial $q$ have that $q({a})$ is a 
vector-valued function $\Gamma\to \M$ with $q({a})(s) = q({a}(s))$ for each $s\in \Gamma$. Using the Stone-Weierstrass theorem, one may deduce from this fact that this claim 
still holds true for continuous functions on the spectrum of $a$. 

\subsubsection{The bounded Borel functional calculus of $L^\infty(\Gamma)\overline{\otimes}\M$}

With the above as starting point, we will in turn briefly describe the structure of a projection-valued measure of any given self-adjoint operator affiliated to 
$L^\infty(\Gamma)\overline{\otimes}\M$ and the Borel functional calculus for such operators. For ease of notation we will write $\N$ for 
$L^\infty(\Gamma)\overline{\otimes}\M$ where convenient. Projection-valued measures and the Borel functional calculus is well-known territory for any operator algebraist, and 
hence, rather than provide full details, we shall merely outline how existing proofs of these facts may be adapted to the present context. First notice that for any 
$\alpha>0$ the sequence $(f_n^{(\alpha)})$ of continuous functions on $\mathbb{R}$ given by 
$$f_n^{(\alpha)}(t) =\left\{\begin{array}{ll} 0 &\quad 0\leq t\leq \alpha\\ (t-\alpha)n &\quad \alpha\leq t\leq \alpha+\tfrac{1}{n}\\ 1 &\quad 
t> \alpha+\tfrac{1}{n}\end{array}\right.$$
increases to the characteristic function $\chi_{(\alpha,\infty)}$. It follows from what was noted in the previous paragraph, thet we will for any self-adjoint $a\in \N$ and 
$n$ have that $$f_n^{(\alpha)}(a)(t) = f_n^{(\alpha)}(a(t))\mbox{ for all }t\in \Gamma.$$Letting $n\nearrow\infty$ then yields the fact that 
$$\chi_{(\alpha,\infty)}(a)(t) = \chi_{(\alpha,\infty)}(a(t))\mbox{ for all }t\in \Gamma.$$An obvious modification of \cite[1.6.1.1-1.6.1.3]{Bla} now yields the following 

\begin{proposition} Let $a$ be a self-adjoint element of $L^\infty(\Gamma)\overline{\otimes}\M$. For any Borel set $B\subset \mathbb{R}$ and any $s\in \Gamma$ the operator 
$\chi_B(a)(s)\in L^\infty(\Gamma)\overline{\otimes}\M$ is the spectral projection $\chi_B(a(s))$ of $a(s)\in \M$.
\end{proposition}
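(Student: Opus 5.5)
The plan is a monotone class argument on the collection of Borel sets for which the claim holds. Fix a self-adjoint $a\in\N$ and let $\mathcal{S}$ be the family of all Borel sets $B\subset\mathbb{R}$ such that $\chi_B(a)(s)=\chi_B(a(s))$ for every $s\in\Gamma$. The aim is to show that $\mathcal{S}$ contains a $\pi$-system generating the Borel $\sigma$-algebra and is a Dynkin system. Dynkin's $\pi$-$\lambda$ theorem then gives that $\mathcal{S}$ is all Borel sets. This mirrors \cite[1.6.1.1-1.6.1.3]{Bla}, where the Borel calculus is built from the continuous one by monotone limits.

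\emph{Step 1 (generators).} The discussion preceding the proposition already shows $(\alpha,\infty)\in\mathcal{S}$ for every real $\alpha$. That argument uses the continuous functions $f_n^{(\alpha)}$; for negative $\alpha$ they are modified in the obvious way. Since $\chi_{(\alpha,\beta]}=\chi_{(\alpha,\infty)}-\chi_{(\beta,\infty)}$ for $\alpha<\beta$, and both the functional calculus and pointwise operations are linear, the half-open intervals $(\alpha,\beta]$ also lie in $\mathcal{S}$. Together with $\emptyset$ and $\mathbb{R}$, these intervals form a $\pi$-system generating the Borel sets. Note that $\chi_{\mathbb{R}}(a)=\I$ is the constant function $\I$.

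\emph{Step 2 (Dynkin properties).} Closure under complements is immediate from $\chi_{B^c}=1-\chi_B$ and linearity. For an increasing sequence $B_n\nearrow B$ with $B_n\in\mathcal{S}$, the projections $\chi_{B_n}(a)$ increase to $\chi_B(a)$ strongly in $\N$, by normality of the Borel calculus. Similarly, $\chi_{B_n}(a(s))\nearrow\chi_B(a(s))$ strongly in $\M$ for each $s$. We therefore need that the identification of $\N$ with bounded measurable $\M$-valued functions from \cite[Theorem IV.7.17]{Tak} turns increasing bounded nets in $\N$ into pointwise increasing limits. This is the same fact already used, without comment, in passing from $f_n^{(\alpha)}$ to $\chi_{(\alpha,\infty)}$.

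\emph{Main obstacle.} The expected difficulty is exactly this passage to monotone limits. One must check that if $x_n\nearrow x$ in $\N$, then $x_n(s)\nearrow x(s)$ strongly in $\M$ for all $s$, or at least for almost all $s$. In the latter case "for all $s$" should be read as holding for a suitable representative, since elements of $L^\infty(\Gamma)\overline{\otimes}\M$ are equivalence classes.

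To handle this, I would first use the pointwise description of products and adjoints to see that $s\mapsto\sup_n x_n(s)$ is again a bounded $\M$-valued measurable function in Takesaki's sense, hence an element $y\in\N$ with $y\geq x_n$ for all $n$. Then I would test against normal functionals $\omega\otimes\phi$, with $\omega\in L^1(\Gamma)$ and $\phi\in\M_*$, whose span is dense in $\N_*$. Using monotone convergence in the $\Gamma$ variable, this shows $\sup_n x_n=y$, so $x=y$. With this lemma in hand, Steps 1 and 2 close the argument.
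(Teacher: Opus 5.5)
Your proposal is correct and follows essentially the paper's route. The paper gets the half-lines from the increasing continuous $f_n^{(\alpha)}$ and then invokes an ``obvious modification'' of Blackadar's monotone-limit construction of the Borel calculus; your Dynkin argument makes this explicit. Your lemma that $x_n\nearrow x$ in $\N$ forces $x_n(s)\nearrow x(s)$ for almost every $s$ supplies the step the paper uses silently, and it rightly makes the conclusion an almost-everywhere one, as in Proposition \ref{Borel-bd}.

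One small repair: complements plus increasing unions is not the full set of Dynkin axioms. You should also record closure under finite disjoint unions, which follows from $\chi_{A\cup B}=\chi_A+\chi_B$ and linearity. More simply, closure under intersections follows from the pointwise product $\chi_{A\cap B}(a)=\chi_A(a)\chi_B(a)$, and this makes $\mathcal{S}$ a $\sigma$-algebra outright.
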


We shall ultimately reconfirm the above fact using different methods (see Proposition \ref{ptwise-spectral}). We note from \cite[Proposition IV.7.14]{Tak} that if 
$\M\subset B(H)$, then $L^\infty(\Gamma)\overline{\otimes}\M\subset B(L^2(\Gamma;H))$. With the above proposition as basis, technical modifications of 
Theorems 5.2.4-5.2.6 of \cite{KR} then lead to a ``point-wise'' version of \cite[Corollary 5.2.7]{KR}, namely the fact that for any $f\in C(\{\mathbb{R},\infty\})$ 
vanishing at infinity, any self-adjoint $a\in \N$ and any $x\in L^2(\Gamma;H)$ we have that 
\begin{eqnarray*}
\int_\Gamma \langle f(a(s))x(s),x(s)\rangle\,ds &=& \langle f(a)x,x\rangle\\
&=& \int_\mathbb{R}f(\gamma)\,d\langle e_\gamma x,x\rangle\\ 
&=& \int_\Gamma \int_\mathbb{R}f(\gamma)\,d\langle e_\gamma x(s),x(s)\rangle\,ds
\end{eqnarray*} 
where $e_\gamma$ and $e_\gamma(s)$ respectively denote the spectral projections $\chi_{(-\infty,\gamma]}(a)$ and $\chi_{(-\infty,\gamma]}(a(s))$. (As in \cite{KR}, 
$\{\mathbb{R},\infty\}$ here denotes the one-point compactification of $\mathbb{R}$.) With this technology at our disposal it is now a straightforward exercise to modify the 
discussion preceding \cite[Theorem 5.2.8]{KR} so as to be able to conclude that we have the following 

\begin{proposition}\label{Borel-bd} Let $a$ be a self-adjoint element of $L^\infty(\Gamma)\overline{\otimes}\M$. For any bounded Borel function $f$ on $\mathbb{R}$ and any 
$x\in L^2(\Gamma;H)$ we have that 
\begin{eqnarray*}
\int_\Gamma \langle f(a(s))x(s),x(s)\rangle\,ds &=& \langle f(a)x,x\rangle\\
&=& \int_\mathbb{R}f(\gamma)\,d\langle e_\gamma x,x\rangle\\ 
&=& \int_\Gamma \int_\mathbb{R}f(\gamma)\,d\langle e_\gamma x(s),x(s)\rangle\,ds.
\end{eqnarray*}
We in particular also have that $f(a)(s)=f(a(s))$ for almost every $s\in \Gamma$.
\end{proposition}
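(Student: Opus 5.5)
The plan is a monotone class argument, starting from the identity for continuous $f$ vanishing at infinity that was just recorded. Since $a$ is bounded, its spectrum and the spectra of almost all $a(s)$ lie in a common compact interval $[-\|a\|,\|a\|]$. Modifying a function outside that interval therefore changes none of the three expressions. So the identity holds for every $f\in C_b(\mathbb{R})$, and hence for all $f\in C(\{\mathbb{R},\infty\})$.

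Now fix $x\in L^2(\Gamma;H)$. Let $\mathcal{F}$ be the class of bounded Borel functions $f$ for which all four expressions are defined and agree, and for which $f(a)(s)=f(a(s))$ almost everywhere. The steps are:
\begin{enumerate}
\item $\mathcal{F}$ contains $C(\{\mathbb{R},\infty\})$. The operator identity comes from the pointwise continuous functional calculus recorded earlier, and the integral identities come from the quoted point-wise version of \cite[Corollary 5.2.7]{KR}.
\item $\mathcal{F}$ is a vector space.
\item $\mathcal{F}$ is closed under uniformly bounded pointwise convergence. Suppose $f_n\to f$ pointwise with $\sup_n\|f_n\|_\infty<\infty$. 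The middle expression converges by dominated convergence with respect to the finite measure $d\langle e_\gamma x,x\rangle$. The same holds for each inner integral with respect to $d\langle e_\gamma(s)x(s),x(s)\rangle$, whose total mass is $\|x(s)\|^2$. This gives pointwise convergence $f_n(a(s))\to f(a(s))$ in the weak operator topology of $\M$. The outer integral then converges by dominated convergence, with dominating function $C\|x(s)\|^2\in L^1(\Gamma)$. Finally, $f_n(a)\to f(a)$ weak operator in $B(L^2(\Gamma;H))$ by the Borel functional calculus of $\N$ (again via dominated convergence in the spectral measure).
\end{enumerate}
By the monotone class theorem for functions, $\mathcal{F}$ then contains all bounded Borel functions.

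For the final assertion, polarise the identity $\langle f(a)x,x\rangle=\int_\Gamma\langle f(a(s))x(s),x(s)\rangle\,ds$ to obtain $\langle f(a)x,y\rangle=\int_\Gamma\langle f(a(s))x(s),y(s)\rangle\,ds$. Hence the bounded field $s\mapsto f(a(s))$ defines, as a decomposable operator on $L^2(\Gamma;H)=\int^\oplus H\,ds$, the same operator as $f(a)$. The function $s\mapsto f(a(s))$ is measurable in Takesaki's sense, being a weak operator limit of measurable fields (by step 3). By \cite[Theorem IV.7.17]{Tak}, elements of $L^\infty(\Gamma)\overline{\otimes}\M$ are determined by their representing functions up to null sets. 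Thus $f(a)(s)=f(a(s))$ almost everywhere.

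The main obstacle is the measurability and identification step. Weak operator limits are pointwise in $s$, but one must be sure that the limit field is measurable and that decomposable operators determine their fields only up to null sets. If needed, I would first restrict $x$ to elementary tensors $\chi_E\otimes\xi$ with $\nu(E)<\infty$, use $\sigma$-finiteness on such pieces together with separability considerations, and then appeal to Takesaki's description to pass to general $x$.
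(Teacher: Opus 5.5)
Your argument is correct and follows essentially the route the paper intends: the paper only says to adapt the passage from continuous to bounded Borel functions preceding \cite[Theorem 5.2.8]{KR}, and that passage extends the continuous identity through bounded pointwise limits and dominated convergence, which your monotone class argument makes explicit. Two points of housekeeping: $\mathcal{F}$ should record measurability of $s\mapsto f(a(s))$, which survives sequential weak operator limits, in place of the a.e.\ identity, since step 3 never checks closure of the latter and you derive it afterwards anyway; and the final uniqueness-up-to-null-sets step needs a countable dense subset of $H$, a separability point the paper also passes over.
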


\subsubsection{The Borel functional calculus for positive operators affiliated to $L^\infty(\Gamma)\overline{\otimes}\M$}  It remains to describe the set of self-adjoint 
operators affiliated to $\N$ as well as give some insight into the Borel functional calculus for these operators. For our purposes it will be enough to do so for positive 
densely defined operators affiliated to $\N$. It is well known that a densely defined positive operator $a$ is affiliated to $\N$ if and only if all of the spectral 
projections of $a$ belong to $L^\infty(\Gamma)\overline{\otimes}\M$. Given a densely defined affiliated positive operator $a$ and positive real numbers  
$\gamma_0>\gamma_1\geq 0$, the operator $a\chi_{[0,\gamma_0]}(a)$ is an element of $L^\infty(\Gamma)\overline{\otimes}\M$ for which we have that 
$a\chi_{[\gamma_1,\gamma_0]}(a)= a\chi_{[\gamma_0,0]}(a)\chi_{[\gamma_1,\gamma_0]}(a\chi_{[\gamma_0,0]}(a))$. 

This quickly leads to the conclusion that the sequence of operators $a_n = a\chi_{[0,n]}(a)\in L^\infty(\Gamma)\overline{\otimes}\M$ increase to $a$. For each $n$ and each 
$s\in \Gamma$ we have that $a_n(s) =(a\chi_{[n,0]}(a))(s)$. So on the other hand the sequence of operators $(a_n(s))$ will for each $s$ increase to an operator $a(s)$ 
affiliated to $L^\infty(\Gamma)\overline{\otimes}\M$. The operator $a$ may therefore be regarded as an operator-valued function $s\to a(s)$ on $\Gamma$. It now behoves us to 
clarify the properties of this function. Since for any $x\in \mathrm{dom}(a)\subseteq L^2(\Gamma;H)$ we will then have that $$\langle ax,x\rangle = \sup_n\langle a_nx,x\rangle 
= \sup_n\int_\Gamma \langle a(s)x(s),x(s)\rangle\,ds =\int_\Gamma \langle a(s)x(s),x(s)\rangle\,ds<\infty$$it clearly follows that almost every $a(s)$ must then also be densely 
defined.

As far as the spectral projections of these operators are concerned we note that for any $s\in \Gamma$ and any pair of positive real numbers $\gamma_0>\gamma_1\geq 0$ the 
aforementioned observations and definitions ensure that we will for any $n>\gamma_0$ have that $\chi_{[\gamma_1,\gamma_0]}(a)(s) = \chi_{[\gamma_1,\gamma_0]}(a_n)(s) = 
\chi_{[\gamma_1,\gamma_0]}(a_n(s)) = \chi_{[\gamma_1,\gamma_0]}(a(s))$ and hence that $\chi_{[\gamma_1,\gamma_0]}(a)(s) = \chi_{[\gamma_1,\gamma_0]}(a(s))$. On letting 
$\gamma_0\nearrow\infty$, it is clear that we also have that $\chi_{[\gamma_1,\infty)}(a)(s) = \chi_{[\gamma_1,\infty)}(a(s))$. It now follows from these facts (and an 
argument similar to the one preceding \cite[Theorem 5.2.8]{KR}) that for $a(s)$ as defined above we in fact have that $\chi_B(a)(s)=\chi_B(a(s))$ for any Borel subset 
$B\subset[0,\infty)$. We therefore obtain 

\begin{proposition} Any positive self-adjoint operator $a$ affiliated to $L^\infty(\Gamma)\overline{\otimes}\M$ may be written as a function on $\Gamma$ taking values in the 
set of self-adjoint positive operators affiliated to $\M$ in such a manner that for any Borel subset $B\subset[0,\infty)$ and any $s\in\Gamma$ we have that 
$\chi_B(a)(s)=\chi_B(a(s))$. 
\end{proposition}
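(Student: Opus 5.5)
The plan is to build the operator-valued function by truncation, reducing the problem to the bounded case handled in Proposition \ref{Borel-bd} and the proposition preceding it. For each $n\in\mathbb{N}$ put $a_n=a\chi_{[0,n]}(a)$. Each $a_n$ is a bounded positive element of $\N=L^\infty(\Gamma)\overline{\otimes}\M$, so by \cite[Theorem IV.7.17]{Tak} it is an $\M$-valued bounded measurable function $s\mapsto a_n(s)$. For $m\geq n$ we have $a_n=a_m\chi_{[0,n]}(a_m)$, and the bounded calculus then gives
\[ a_n(s)=a_m(s)\chi_{[0,n]}(a_m(s)) \]
for (almost) every $s$. Moreover, the first proposition of this subsection gives $\chi_{[0,n]}(a_m)(s)=\chi_{[0,n]}(a_m(s))$ for every $s$.

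\textbf{Step 1: define $a(s)$.} For each $s$, the sequence $(a_n(s))$ consists of positive elements of $\M$ which are compatible truncations of one another, and the projections $\chi_{[0,n]}(a_n(s))$ increase. I will define $a(s)$ by spectral gluing. Let $q_n(s)=\chi_{[0,n]}(a_n(s))$, and set $a(s)=a_n(s)$ on $q_n(s)H$, extended to the closure. If $\bigvee_n q_n(s)=\I$, this yields a positive self-adjoint operator affiliated to $\M$. Affiliation holds because all of its spectral projections are sups of spectral projections of the $a_n(s)\in\M$.

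\textbf{Step 2: density of the domain.} We need $\bigvee_n q_n(s)=\I$, at least almost everywhere. Since $a$ is densely defined, hence a.e. finite, $\chi_{[0,n]}(a)\nearrow\I$ strongly in $\N$. Pointwise, $q_n(s)=\chi_{[0,n]}(a)(s)$. The normality computation $\langle ax,x\rangle=\int_\Gamma\langle a(s)x(s),x(s)\rangle\,ds$ shows that $\int_\Gamma\langle(\I-q_n(s))x(s),x(s)\rangle\,ds\to0$ for every $x\in L^2(\Gamma;H)$. From this I expect $\bigvee_n q_n(s)=\I$ off a null set. On that null set one may redefine $a(s)$ arbitrarily, say as $0$, without changing the element of $\N$.

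\textbf{Step 3: spectral projections.} For $0\leq\gamma_1<\gamma_0$ and $n>\gamma_0$, we have $\chi_{[\gamma_1,\gamma_0]}(a)=\chi_{[\gamma_1,\gamma_0]}(a_n)$ in $\N$, and likewise $\chi_{[\gamma_1,\gamma_0]}(a(s))=\chi_{[\gamma_1,\gamma_0]}(a_n(s))$ in $\M$, by construction. The bounded result then gives $\chi_{[\gamma_1,\gamma_0]}(a)(s)=\chi_{[\gamma_1,\gamma_0]}(a(s))$. Taking strong limits $\gamma_0\nearrow\infty$ handles unbounded intervals. To pass to arbitrary Borel $B\subset[0,\infty)$, consider the class of Borel sets for which the pointwise identity holds. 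It contains the intervals and is closed under finite disjoint unions, complements in $[0,\infty)$ and monotone limits, the latter by normality and pointwise strong convergence in $\M$. A monotone class ($\pi$-$\lambda$) argument, mirroring the discussion before \cite[Theorem 5.2.8]{KR}, then finishes the proof.

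The main obstacle is the ``every $s$'' versus ``almost every $s$'' issue. Takesaki's representation identifies elements of $\N$ only up to null sets. Monotone limits of pointwise identities over uncountably many Borel sets $B$ are therefore delicate: each identity holds off a null set depending on $B$. I would handle this by working with a fixed representative of each $a_n$ and generating everything from the countable family of rational intervals. Sups and infs of countable families of projections commute with pointwise evaluation, which should make the identity hold for all $s$ simultaneously, modulo the single null set from Step 2.
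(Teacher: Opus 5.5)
Your route is essentially the paper's. In both, you truncate to $a_n=a\chi_{[0,n]}(a)\in\N$, use the bounded pointwise calculus, build $a(s)$ from the $a_n(s)$, check density almost everywhere, match interval projections via $a_n$ for large $n$, and extend as in the discussion preceding \cite[Theorem 5.2.8]{KR}. The differences are minor. The paper takes $a(s)$ to be the increasing limit of the $a_n(s)$ and argues density from $\langle ax,x\rangle=\int_\Gamma\langle a(s)x(s),x(s)\rangle\,ds$. Your argument via $\chi_{[0,n]}(a)(s)\nearrow\I$ a.e.\ is, if anything, more direct.

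Two slips need repairing, though.

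\emph{The gluing projection.} Since $0\leq a_n\leq n$, the projection $\chi_{[0,n]}(a_n(s))$ is simply $\I$. Step 1 as written would therefore force $a(s)=a_n(s)$ on all of $H$ for every $n$, which is contradictory. The gluing projection must be $q_n(s)=\chi_{[0,n]}(a)(s)$, which is what you actually use in Step 2. It can be recovered from the truncations as $\I-\bigvee_{m>n}\chi_{(n,m]}(a_m(s))$, because $\chi_{(n,m]}(a_m)=\chi_{(n,m]}(a)$. The compatibility $a_m(s)q_n(s)=a_n(s)$ for $m\geq n$ then follows from $a_m\chi_{[0,n]}(a)=a_n$ in $\N$.

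\emph{The case $\gamma_1=0$.} The identity $\chi_{[\gamma_1,\gamma_0]}(a)=\chi_{[\gamma_1,\gamma_0]}(a_n)$ fails when $\gamma_1=0$. Because $a_n$ vanishes on the range of $\chi_{(n,\infty)}(a)$, you get $\chi_{\{0\}}(a_n)=\chi_{\{0\}}(a)+\chi_{(n,\infty)}(a)$. The same defect breaks $\chi_{[0,\gamma_0]}(a(s))=\chi_{[0,\gamma_0]}(a_n(s))$: the left side lies under $q_n(s)$, while the right side dominates $\I-q_n(s)$. The paper makes the same slip. The fix is to work with intervals $(\gamma_1,\gamma_0]$, where $0\leq\gamma_1<\gamma_0\leq n$, for which both identities do hold. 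Then obtain $\{0\}$ via $\chi_{\{0\}}(a)=\I-\bigvee_m\chi_{(0,m]}(a)$ before running your monotone class argument.
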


Having noted the above, we may now use \cite[Lemma 5.6.31]{KR} to expose the essentials of the Borel functional calculus for positive densely defined operators affiliated to 
$\N$. For the sake of simplicity of notation we shall depart from the notation in \cite{KR} and simply write $ae$ for the strong product of such an affiliated operator $a$ 
and one of its spectral projections. Let us firstly assume that $f$ is a bounded complex valued Borel function. Using \cite[Lemma 5.6.31]{KR} and what we already know to be 
the case for positive elements of $\N$, it follows that $$f(a)(s)\chi_{[0,n]}(a)(s) = f(a_n)(s)\chi_{[0,n]}(a_n)(s) = f(a_n(s))\chi_{[0,n]}(a_n(s)) = 
f(a(s))\chi_{[0,n]}(a(s)).$$ Letting $n\nearrow \infty$ now shows that $f(a)\chi_{[0,n]}(a)$ and $f(a(s))\chi_{[0,n]}(a(s))$ ($s\in\Gamma$) respectively increase to 
affiliated operators $f(a)$ and $f(a(s))$ for which we have that $f(a)(s)=f(a(s))$ for almost every $s\in\Gamma$. If now we apply Proposition \ref{Borel-bd} to the operators 
$a_n$ and then invoke the monotone convergence theorem, it follows that we will for any $x\in \mathrm{dom}(a)\subset L^2(\Gamma;H)$ have that 
\begin{eqnarray*}
\int_\Gamma \langle f(a(s))x(s),x(s)\rangle\,ds &=& \langle f(a)x,x\rangle\\
&=& \int_{[0,\infty)}f(\gamma)\,d\langle e_\gamma x,x\rangle\\ 
&=& \int_\Gamma \int_{[0,\infty)}f(\gamma)\,d\langle e_\gamma(s) x(s),x(s)\rangle\,ds.
\end{eqnarray*}

Given a general non-negative almost everywhere finite Borel function $f$ on $\sigma(a)$, the corresponding statement for such functions may then be obtained by writing $f$ as 
the monotone limit of an increasing sequence $(f_n)$ of non-negative Borel functions and once again invoking the monotone concergence theorem. We therefore obtain the 
following

\begin{proposition}\label{Borel-unbd} Let $a$ be a positive self-adjoint operator affiliated to $L^\infty(\Gamma)\overline{\otimes}\M$. For any non-negative Borel function $f$
on $[0,\infty)$ which is bounded on compact sets there is a unique selfadjoint positive operator $f(x)$ with domain
$$\mathrm{dom}(f(a))=\left\{x\in L^2(\Gamma;H)\colon   \int_{0}^{\infty}|f(\gamma)|^2 d\langle e_\gamma x, x\rangle<\infty\right\}$$given by 
$$\langle f(a)x, x \rangle = \int_\Gamma \langle f(a(s))x(s), x(s) \rangle\,ds = \int_\Gamma\int_{[0,\infty)}f(\gamma) d\langle e_\gamma(s)x(s), x(s)\rangle\,ds = 
\int_{[0,\infty)}f(\gamma) d\langle e_\gamma\xi, \xi \rangle.$$We in particular also have that $f(a)(s)=f(a(s))$ for almost every $s\in \Gamma$.
\end{proposition}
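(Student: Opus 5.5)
The plan is to build $f(a)$ by truncation, using the bounded calculus of Proposition \ref{Borel-bd} at each level and monotone convergence to pass to the limit. Throughout, $a_n=a\chi_{[0,n]}(a)\in\N$ and $e_n=\chi_{[0,n]}(a)$.

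\textbf{Bounded truncations.} Since $f$ is bounded on compact sets, each $f_n=f\chi_{[0,n]}$ is a bounded non-negative Borel function. I take $f(a)$ to be the usual spectral-theorem operator for $a$ with respect to the spectral measure $e_\gamma$. Its domain is exactly the one displayed in the statement, and uniqueness follows from the uniqueness part of the spectral theorem for unbounded self-adjoint operators. Its spectral projections lie in $\N$, so $f(a)$ is affiliated to $\N$. We have $f(a)e_n=f_n(a)=f_n(a_n)$, and by \cite[Lemma 5.6.31]{KR} these are bounded elements of $\N$ that increase with $n$.

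\textbf{Pointwise identity.} By Proposition \ref{Borel-bd} applied to the bounded self-adjoint $a_n$, we get $f_n(a_n)(s)=f_n(a_n(s))$ for almost every $s$. The preceding proposition gives $\chi_B(a)(s)=\chi_B(a(s))$, so $a_n(s)=a(s)\chi_{[0,n]}(a(s))$ and hence $f_n(a_n(s))=f(a(s))\chi_{[0,n]}(a(s))$. Discarding a countable union of null sets, one per $n$, we have for a.e.\ $s$ and every $n$
$$f(a)(s)\chi_{[0,n]}(a(s))=f(a(s))\chi_{[0,n]}(a(s)).$$
Here $f(a)(s)$ is defined as the increasing limit of $(f(a)e_n)(s)$, exactly as $a(s)$ was defined in the preceding discussion. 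Letting $n\to\infty$, both sides increase to $f(a)(s)$ and $f(a(s))$ respectively, in the sense of quadratic forms. This gives $f(a)(s)=f(a(s))$ almost everywhere.

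\textbf{Form identities.} For $x\in L^2(\Gamma;H)$, Proposition \ref{Borel-bd} gives
$$\langle f_n(a)x,x\rangle=\int_\Gamma\langle f_n(a(s))x(s),x(s)\rangle\,ds=\int_\Gamma\int_{[0,\infty)}f_n\,d\langle e_\gamma(s)x(s),x(s)\rangle\,ds=\int f_n\,d\langle e_\gamma x,x\rangle.$$
I then apply the monotone convergence theorem twice, once in $\gamma$ and once in $s$, as $f_n\nearrow f$. The limits are the extended-valued quadratic forms, finite precisely when $x$ lies in the form domain of $f(a)$. On $\mathrm{dom}(f(a))$ the left side converges to $\langle f(a)x,x\rangle$ by the spectral theorem.

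\textbf{Main obstacle.} The step I expect to be hardest is making sense of $\langle f(a(s))x(s),x(s)\rangle$ when $x(s)$ need not lie in $\mathrm{dom}(f(a(s)))$. I would handle this by working throughout with the extended quadratic forms $\|f(a(s))^{1/2}x(s)\|^2\in[0,\infty]$, defined as $\sup_n\langle f_n(a(s))x(s),x(s)\rangle$, rather than with operator domains. Monotone convergence then shows that for $x\in\mathrm{dom}(f(a))$ this integrand is finite almost everywhere and integrable, which justifies the first displayed equality in the statement.
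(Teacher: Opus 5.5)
Your argument is essentially the paper's: truncate with $a_n=a\chi_{[0,n]}(a)$, use \cite[Lemma 5.6.31]{KR} and Proposition \ref{Borel-bd} at each level, identify $f(a)(s)$ with $f(a(s))$ through the truncations, and pass to the limit by monotone convergence. You streamline it in two ways. First, you handle the unboundedness of $a$ and of $f$ in one step by truncating to $f_n=f\chi_{[0,n]}$, which is bounded precisely because $f$ is bounded on compact sets; the paper instead treats bounded $f$ first and then approximates a general $f$ monotonically. Second, by defining $f(a)$ through the spectral theorem for $a$ you get the stated domain and the uniqueness directly, which the paper leaves implicit.

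There is, however, a slip in two intermediate identities. Because $a_n$ vanishes on the range of $\chi_{(n,\infty)}(a)$, one has $f_n(a_n)=f_n(a)+f(0)\chi_{(n,\infty)}(a)$. So $f_n(a)=f_n(a_n)$ fails whenever $f(0)\neq 0$: for $f\equiv 1$ you get $f_n(a)=e_n$ but $f_n(a_n)=\chi_{[0,n]}(a_n)=\I$. For the same reason $f_n(a_n(s))$ is in general not $f(a(s))\chi_{[0,n]}(a(s))$.

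The repair is to keep the projection: $f(a)e_n=f_n(a)=f_n(a_n)e_n$. By pointwise multiplicativity in $\N$ and $e_n(s)=\chi_{[0,n]}(a(s))$ you then get $(f(a)e_n)(s)=f_n(a_n(s))\chi_{[0,n]}(a(s))=f(a(s))\chi_{[0,n]}(a(s))$ for almost every $s$. That is exactly the identity you then pass to the limit in as $n\to\infty$. (The paper's displayed chain has the same blemish; its factors $\chi_{[0,n]}(a_n)$ should be $\chi_{[0,n]}(a)$.)

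Likewise, Proposition \ref{Borel-bd} applies only to the bounded $a_n$. For your form identities, either apply it to $a_n$ and the vector $e_nx$, or combine $f_n(a)(s)=f_n(a(s))$ with $\langle bx,x\rangle=\int_\Gamma\langle b(s)x(s),x(s)\rangle\,ds$ for $b\in\N$. With these corrections the argument goes through. Running the same monotone argument with $f^2$ in place of $f$ also shows that $x(s)\in\mathrm{dom}(f(a(s)))$ for almost every $s$ whenever $x\in\mathrm{dom}(f(a))$. So the integrand in the first displayed equality is a genuine inner product almost everywhere, which settles your ``main obstacle'' outright.
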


\subsubsection{The trace on $L^\infty(\Gamma)\overline{\otimes}\M$}
The predual of $L^\infty(\Gamma)\overline{\otimes}\M$ similarly corresponds to the space $L^1(\Gamma;\M_*)\equiv L^1(\Gamma)\otimes_\pi \M_*$ of measurable $\M_*$ valued 
functions ${\omega}$ on $\Gamma$ normed by the norm $\int_\Gamma\|{\omega}(s)\|\,d\nu(s)<\infty$. The dual action is described in \cite[Theorem IV.7.17]{Tak}. We may of 
course without loss of generality rather realise the predual as $L^1(\Gamma)\otimes_\pi L^1(\M,\tau)\equiv L^1(\Gamma;L^1(\M,\tau))$, in which case the dual action will for 
say ${a}\in L^\infty(\Gamma)\overline{\otimes}\M$ and ${x}\in L^1(\Gamma;L^1(\M,\tau))$ then be given 
by $$\langle {a},{x}\rangle = \left(\int_\Gamma\otimes\tau\right)({a}{x})=\int_\Gamma\tau({a}(s){x}(s))\,d\nu(s).$$Given 
${a}\in L^\infty(\Gamma)\overline{\otimes}\M$, we may select a net of projections $(p_\gamma)\subset L^1(\M,\tau)\cap L^\infty(\M,\tau)$ increasing to $\I$ and a net 
$(K_\beta)$ of compact neighbourhoods increasing to $\Gamma$ (in which case $(\chi_{K_\beta})\subset L^1(\Gamma)\cap L^\infty(\Gamma)$ similarly increases to the constant 
function 1). The simple tensors $\chi_{K_\beta}\otimes p_\gamma$, and hence also $(\chi_{K_\beta}\otimes p_\gamma){a}$, will then all belong to the predual 
$L^1(\Gamma;L^1(\M,\tau))$. The dual action formula given above then ensures that $$\left(\int_\Gamma\otimes\tau\right)({a}^*(p_\gamma\otimes \chi_{K_\beta}){a}= 
\int_{K_\beta}\tau({a}(s)^*p_\gamma{a}(s))\,d\nu(s).$$By first letting $p_\gamma\nearrow \I$ and then $K_\beta\nearrow \Gamma$, we obtain that 
$$\left(\int_\Gamma\otimes\tau\right)({a}^*{a})= \int_\Gamma\tau({a}(s)^*{a}(s))\,d\nu(s)\mbox{ for all }{a}\in 
L^\infty(\Gamma)\overline{\otimes}\M.$$

For the above trace the associated $\tau$-measurable operators are of course those densely define affiliated operators $a$ affiliated to  
$L^\infty(\Gamma)\overline{\otimes}\M$ for which there exists some $\lambda>0$ such that $(\int_\Gamma\circ\tau)(\chi_{(\infty,\lambda)}(|a|)) =  
\int_\Gamma\tau(\chi_{(\infty,\lambda)}(|a|(s)))\,ds <\infty$. We then clearly also have that $\tau(\chi_{(\infty,\lambda)}(|a|(s)))<\infty$ almost everywhere, showing that 
for almost every $s$, each $a(s)$ will also be $\tau$-measurable.

\subsection{A review of group actions}\label{vNA-gac}

Let $\M$ be a semifinite von Neumann algebra equipped with a faithful normal semifinite trace $\tau$ and $G$ some mutiplicative locally compact group which acts on $\M$ by 
means of a group of *-automorphisms $\alpha_s$ ($s\in G$) which respect the group action in that $\alpha_s\alpha_t=\alpha_{st}$ and $\alpha_e=\mathrm{Id}_{\M}$. Guided by the 
classical context, one would also require that $\tau_\M\circ\alpha_g =\tau_\M$ for each $g\in G$ and also insist that for each $a\in \M$,  $G\to \M: g\to \alpha_g(a)$ is a 
continuous map when $\M$ is equipped with its $w*$-topology. It easily follows from \cite[Proposition 4.7]{L} and the fact that $\tau_\M\circ\alpha_g =\tau_\M$ for each 
$g\in G$, ensures that each $\alpha_g$ canonically and continuously extends to a *-automorphism on $\widetilde{\M}$. We will retain the notation $\alpha_g$ for this 
extension. Given any $a\in \M$, $b\in L^1(\M)$, we then further have that $$\tau(\alpha_g(a)b)=\tau\circ\alpha_{g^{-1}}(\alpha_g(a)b)=\tau(a\alpha_{g^{-1}}(b)).$$Hence the 
action of each $\alpha_g$ on $\M$ appears as the dual of the action of $\alpha_{g^{-1}}$ on $L^1(\M)\equiv \M_*$. From the discussion on page 281 of \cite{Bla} it is now 
clear that on $L^1(\M)$ the map $g\to \alpha_g(a)$ is point-norm continuous for each $a\in L^1(\M)$, whereas in its action on $\M$, the same map is point-($\sigma$-strong) 
continuous. We summarise the above discussion in the following lemmata. 

\begin{lemma}
Let $G\to \mathrm{Aut}(\M):g\to\alpha_g$ be a representation of the locally compact group $G$ as a group of *-automorphisms on $\M$satisfying $\tau_\M\circ\alpha_g =\tau_\M$ 
for each $g\in G$. Also let $G\to \M: g\to \alpha_g(a)$ be point-w* continuous for each $a\in \M$. Then each $\alpha_g$ canonically extends to a *-automorphism on 
$\widetilde{\M}$ which is homeomorphic with respect to the topology of convergence in measure, and which maps $L^1(\M)$ into $L^1(\M)$. Moreover the action of each $\alpha_g$ 
on $\M$ appears as the dual of the action of $\alpha_{g^{-1}}$ on $L^1(\M)\equiv \M_*$. In addition for any fixed $a\in L^1(\M)$, the map $G\to L^1(\M):g\to\alpha_g(a)$ is 
point-norm continuous.
\end{lemma}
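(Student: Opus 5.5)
The plan is to treat the four assertions in turn, with trace preservation doing most of the work.

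\emph{Extension to $\widetilde{\M}$.} Each $\alpha_g$ is a $*$-automorphism of $\M$ with $\tau\circ\alpha_g=\tau$. It therefore maps projections to projections and preserves their traces. For $a\in\widetilde{\M}$ with polar decomposition $a=u|a|$, I would define $\alpha_g(a)$ as follows. Let the spectral resolution of $|a|$ be $\int\lambda\,de_\lambda$. Set
$$\alpha_g(a)=\alpha_g(u)\int\lambda\,d\alpha_g(e_\lambda).$$
This is a closed densely defined operator affiliated to $\M$. Since
$$d_{\alpha_g(a)}(t)=\tau(\alpha_g(\chi_{(t,\infty)}(|a|)))=d_a(t),$$
it is again $\tau$-measurable, and $\decra(\cdot,\alpha_g(a))=\decra(\cdot,a)$. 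This is essentially the content of \cite[Proposition 4.7]{L}, and I would invoke it directly rather than reprove it. One still has to check that this map is a $*$-automorphism of $\widetilde{\M}$ for the strong sum and product. For this I would use the density of $\M$ in $\widetilde{\M}$ in the measure topology, together with continuity of the extension.

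\emph{Homeomorphism in measure.} Recall that the basic neighbourhoods of $0$ are
$$N(\varepsilon,\delta)=\{a: \exists\, p\in\M \text{ a projection with } \|ap\|\le\varepsilon,\ \tau(\I-p)\le\delta\}.$$
The automorphism $\alpha_g$ maps $N(\varepsilon,\delta)\cap\M$ onto itself, because $\|\alpha_g(a)\alpha_g(p)\|=\|ap\|$ and $\tau(\I-\alpha_g(p))=\tau(\I-p)$. Hence $\alpha_g$ is uniformly continuous on $\M$. It therefore extends uniquely to $\widetilde{\M}$, and this extension agrees with the spectral definition above. Its inverse is the extension of $\alpha_{g^{-1}}$, which is equally continuous, so $\alpha_g$ is a homeomorphism.

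\emph{$L^1$-invariance and duality.} Since $\decra(\cdot,\alpha_g(a))=\decra(\cdot,a)$, or directly since $\tau(|\alpha_g(a)|)=\tau(\alpha_g(|a|))=\tau(|a|)$, the map $\alpha_g$ restricts to an isometry of $L^1(\M)$. The same holds for every rearrangement invariant space. For the duality claim I would take $a\in\M$ and $b\in L^1(\M)$ and compute
$$\tau(\alpha_g(a)b)=\tau(\alpha_{g^{-1}}(\alpha_g(a)b))=\tau(a\alpha_{g^{-1}}(b)).$$
This uses multiplicativity of the extension on $\widetilde{\M}$ and $\tau\circ\alpha_{g^{-1}}=\tau$ on $L^1$. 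The latter follows by normality, approximating $b$ by $b\,\chi_{(1/n,n)}$-type truncations in $\M\cap L^1$.

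\emph{Point-norm continuity on $L^1$.} The duality shows that $g\mapsto\alpha_g(b)$ is weakly continuous in $L^1(\M)=\M_*$. Concretely, for fixed $a\in\M$ the function $g\mapsto\tau(a\alpha_g(b))=\tau(\alpha_{g^{-1}}(a)b)$ is continuous. This uses the w*-continuity hypothesis together with continuity of inversion in $G$.

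To upgrade weak continuity to norm continuity, I would use the standard fact that a weakly continuous representation by isometries is strongly continuous. Fix $b$ and $g_0$. Choose a positive $f\in C_c(G)$ concentrated near $e$ with $\int f=1$, and set $b_f=\int f(h)\alpha_h(b)\,dh$, a weak integral. Then $b_f$ is norm-close to $b$ by a Hahn–Banach/Mazur argument on convex combinations. Moreover $g\mapsto\alpha_g(b_f)=\int f(g^{-1}h)\alpha_h(b)\,dh$ is norm continuous, by uniform continuity of translation of $f$ in $L^1(G)$ and $\|\alpha_h\|=1$. An $\varepsilon/3$ argument then finishes.

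Alternatively, I would simply cite the discussion on p.~281 of \cite{Bla}, as the paper indicates. I expect this last step to be the main obstacle, since it is the only place where a genuine topological upgrade (weak to norm) is required. The alternative route is to prove it first for $b=b^*$ in $L^1\cap\M$ using $\|\alpha_g(b)-b\|_1$ estimates, but the smoothing argument seems cleaner.
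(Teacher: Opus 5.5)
Your proposal is correct and follows the same route as the paper, whose justification is the discussion preceding the lemma. That discussion extends $\alpha_g$ to $\widetilde{\M}$ via \cite[Proposition 4.7]{L} together with trace invariance, uses the identical computation $\tau(\alpha_g(a)b)=\tau(a\alpha_{g^{-1}}(b))$ for the duality claim, and takes point-norm continuity on $L^1(\M)$ from \cite[p.~281]{Bla}. Your smoothing/Mazur argument simply unpacks that last citation, i.e.\ the standard proof that weakly continuous isometric representations are strongly continuous. The one detail you should add is that $b_f$ lies in the non-reflexive space $\M_*$ and not merely in $\M^*$; this follows from the Krein--\v{S}mulian theorem, or from a direct check that $a\mapsto\int f(h)\tau(\alpha_{h^{-1}}(a)b)\,dh$ is normal.
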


\begin{lemma}
The von Neumann algebra $\M$ is complete in each of the $\sigma$-strong and $\sigma$-strong* topologies, and fully complete in the Arens-Mackey topology. Moreover for each 
$a\in M$, the map $G\to M: g\to \alpha_g(a)$ will be continuous with respect to each of these topologies on $M$.
\end{lemma}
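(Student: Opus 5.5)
The statement combines a completeness claim about the topologies on $\M$ with a continuity claim about the orbit maps $g\to\alpha_g(a)$. I would prove the two halves separately.

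\emph{Completeness.} I would use the standard duality description of each topology.
\begin{itemize}
\item The $\sigma$-strong topology is given by the seminorms $a\mapsto\omega(a^*a)^{1/2}$ for $\omega\in(\M_*)_+$. The $\sigma$-strong* topology adds the seminorms $a\mapsto\omega(aa^*)^{1/2}$.
\item The Arens--Mackey topology $\tau(\M,\M_*)$ is the topology of uniform convergence on weakly compact absolutely convex subsets of $\M_*$.
\end{itemize}
For the Mackey topology, full completeness follows from the Krein--\v{S}mulian/Grothendieck completeness theorem for Mackey duals of Banach spaces. Indeed $\M=(\M_*)^*$, and the dual of a Banach space is (fully) complete for the Mackey topology of the pair $(\M,\M_*)$, since a linear functional on $\M_*$ which is weakly continuous on weakly compact absolutely convex sets is norm bounded; I would cite Grothendieck's completeness theorem.

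For $\sigma$-strong completeness, take a $\sigma$-strong Cauchy net $(a_i)$. I would first reduce to bounded nets, then use that the closed balls of $\M$ are $\sigma$-strongly complete, since they are complete in the strong operator topology on $B(H)$ after passing to an amplification where the $\sigma$-topologies become ordinary ones. The main obstacle is an unbounded Cauchy net. Here I would instead invoke Akemann's result that the $\sigma$-strong* and Mackey topologies coincide on bounded sets, together with the fact that a Cauchy net in the finer Mackey topology converges. The cleanest route is to cite Takesaki or Blackadar: a von Neumann algebra is complete in its $\sigma$-strong and $\sigma$-strong* uniform structures, because the completion embeds in the dual of $\M_*$, which is $\M$ itself.

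\emph{Continuity.} Fix $a\in\M$ and $g_i\to g$. Writing $\alpha_{g_i}(a)-\alpha_g(a)=\alpha_g(\alpha_{g^{-1}g_i}(a)-a)$ and using that $\alpha_g$ is normal, hence bicontinuous for all these topologies, I may assume $g=e$.

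For the $\sigma$-strong topology, the plan is to compute, for $\omega\in(\M_*)_+$,
\[
\omega\bigl((\alpha_h(a)-a)^*(\alpha_h(a)-a)\bigr)=\omega(\alpha_h(a^*a))-2\,\mathrm{Re}\,\omega(a^*\alpha_h(a))+\omega(a^*a).
\]
By point-w* continuity $\omega(\alpha_h(a^*a))\to\omega(a^*a)$. Also $\omega(a^*\alpha_h(a))=(\omega a^*)(\alpha_h(a))\to\omega(a^*a)$, since $\omega a^*\in\M_*$. Hence the whole expression tends to $0$, and the same computation with $a^*$ yields $\sigma$-strong* continuity.

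For the Mackey topology, I would work with a weakly compact absolutely convex set $K\subset\M_*$ and show $\sup_{\omega\in K}|\omega(\alpha_h(a)-a)|\to0$. Identify $\M_*$ with $L^1(\M)$ and use the duality $\tau(\alpha_h(a)b)=\tau(a\alpha_{h^{-1}}(b))$ from the preceding lemma. This gives
\[
|\omega(\alpha_h(a)-a)|=|\tau(a(\alpha_{h^{-1}}(b)-b))|\le\|a\|\,\|\alpha_{h^{-1}}(b)-b\|_1 .
\]
By the preceding lemma the map $h\to\alpha_h(b)$ is point-norm continuous on $L^1(\M)$. The convergence $\|\alpha_{h^{-1}}(b)-b\|_1\to0$ is pointwise in $b$, and it is uniform on norm-compact sets, because the family $\{\alpha_h\}$ is isometric and hence equicontinuous. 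A weakly compact $K$ need not be norm compact, so the key step is to upgrade this. I would try Akemann's characterisation: weakly compact $K$ is uniformly $\sigma$-strong* continuous, i.e. for each $\varepsilon>0$ there is a $\varphi\in(\M_*)_+$ with $|\omega(x)|\le\varepsilon\|x\|+\varphi(x^*x+xx^*)^{1/2}$ for all $\omega\in K$. This reduces Mackey continuity to the $\sigma$-strong* continuity already proved, since $\alpha_h(a)-a$ stays bounded by $2\|a\|$.
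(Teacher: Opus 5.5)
Your continuity half is sound, but the completeness half does not establish what the lemma asserts.

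\emph{Full completeness in the Mackey topology.} The reason you give is that a linear form on $\M_*$ which is weakly continuous on weakly compact absolutely convex sets is norm bounded. That is Grothendieck's criterion, and it yields only \emph{completeness} of $(\M,\tau(\M,\M_*))$. Full ($B$-)completeness is strictly stronger and requires Pt\'ak's criterion: every subspace $Q\subseteq\M_*$ whose intersection with each weakly compact absolutely convex set is weakly closed must itself be weakly closed. This is easy here. If $q_n\in Q$ converge in norm to $\psi$, the closed absolutely convex hull of $\{q_n\}\cup\{\psi\}$ is norm compact, hence weakly compact, so $\psi\in Q$; and a norm closed subspace is weakly closed. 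But it has to be said, or cited as the paper does via Robertson--Robertson (the Mackey dual of a Fr\'echet space is fully complete).

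\emph{$\sigma$-strong completeness.} Your intermediate argument fails where you suspect. A $\sigma$-strong Cauchy \emph{net} need not be bounded; only Cauchy sequences are, by uniform boundedness. Nor need it be Cauchy for the finer Mackey topology. So Akemann's theorem, which only compares topologies on bounded sets, cannot handle the unbounded case. Your fallback sentence, that ``the completion embeds in the dual of $\M_*$'', is the entire content of the claim.

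The paper simply cites Blackadar I.3.2.2 here. If you want a proof instead, you can argue as follows. By Grothendieck's theorem, the $\sigma$-strong completion of $\M$ is the space of linear forms $\phi$ on $\M_*$ that are $\sigma(\M_*,\M)$-continuous on each $\sigma$-strongly equicontinuous set. Polar decomposition gives, for each $\psi\in\M_*$, some $\omega_\psi\in(\M_*)_+$ with $\|\omega_\psi\|=\|\psi\|$ and $|\psi(x)|\le\|\psi\|^{1/2}\omega_\psi(x^*x)^{1/2}$. Hence if $\sum_n\|\psi_n\|<\infty$, then with $\omega=\sum_n\omega_{\psi_n}$ the set $\{\psi_n\}$ is $\sigma$-strongly equicontinuous. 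Since $\psi_n\to0$ weakly, this gives $\phi(\psi_n)\to0$. An unbounded $\phi$ would admit $\|\psi_n\|\le2^{-n}$ with $|\phi(\psi_n)|\ge1$, so $\phi$ is bounded and $\phi\in(\M_*)^*=\M$. The $\sigma$-strong* case follows because its equicontinuous sets form a larger family, so its completion sits inside the $\sigma$-strong one.

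\emph{Continuity.} Your expansion $\omega(\alpha_h(a^*a))-2\,\mathrm{Re}\,\omega(a^*\alpha_h(a))+\omega(a^*a)\to0$ is a self-contained substitute for the paper's citations (Blackadar p.~281, and Takesaki II.2.5 and II.4.1 applied to real and imaginary parts). Your Mackey step combines Akemann's uniform $\sigma$-strong* estimate on weakly compact sets with $\|\alpha_h(a)-a\|\le2\|a\|$. That is the same mechanism as the paper's appeal to boundedness of the orbit together with Takesaki, Theorem II.5.7.
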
 

\begin{proof}
The first claim is contained in \cite[I.3.2.2]{Bla}. Next note that any dual of a Frech\'et space is fully complete when equipped with the Arens-Mackey topology (see p. 123 
of \cite{RR}).  

Next let $a\in \M$ be given. The point-($\sigma$-strong) continuity of the map $G\to \M: g\to \alpha_g(a)$ is stated on page 281 of \cite{Bla}. To see that this map is point-(
$\sigma$-strong*) continuous, simply apply \cite[Lemma II.2.5 \& Prop II.4.1]{Tak} to the maps produced by the real and imaginary parts of $a$. For any $a\in \M$, the set 
$\{\alpha_g(a): g\in \M\}$ is of course norm-bounded. Hence the fact that $G\to \M: g\to \alpha_g(a)$ is point-(Arens-Mackey) continuous, now follows from 
\cite[Theorem II.5.7]{Tak}.
\end{proof}

\subsection{Calderon's transfer principle}\label{CTP} 

For any Radon measure space $(\Gamma,\nu)$ the space $L^{loc}(\Gamma)$ of \emph{locally integrable functions} is defined to be the space of measurable functions which are 
integrable on measurable sets of finite measure. With this space as starting point, the following concept of transferable operators was introduced in \cite{dBL}:

\begin{definition}[Transferable operators-$\beta$] Let $T$ be a continuous linear operator having the class of locally integrable functions over $G$ as domain and $C(G)$, the 
space of continuous functions on $G$ given the compact-open topology, as range. It is called \emph{transferable} if it satisfies the following conditions:
\begin{enumerate}
\item $T$ is \emph{semilocal}: there exists an open neighbourhood $U$ of $e \in G$ with compact closure such that if $\mathrm{supp}(f)$ is contained in a set $V$ , then 
$\mathrm{supp}(T(f))$ is contained in $U^{-1}V$.
\item $T$ is \emph{translation invariant}: for all $g \in G$ and $f \in L^{loc}(G)$, $\lambda_g\circ T(f) = T \lambda_g(f)$, where $\lambda_g$ is the left translation 
operator $\lambda_g(f)(s) = f(g^{-1}s)$.
\end{enumerate}
\end{definition} 

Given a $\sigma$-finite measure space $(\Omega,\omega)$ which admits an integrable group action, the first step in Calderon's transfer principle is to use the information 
encoded in the action of a transferable operator $T$, to construct a related operator $T^\#$ in the measure space context. This process proceeds as follows: For any 
$f\in L^{1+\infty}(\Omega)$ and any $h\in L^{loc}(\Omega)$, define $h\otimes_\alpha f$ by $(\omega, t)\to f(\alpha_t(\omega))h(t)$ for all 
$\omega\in \Omega$ and $t\in G$.  
\begin{itemize}
\item The first step is note that the prescription $\iota_\alpha:f\to 1\otimes_\alpha f$ yields a map from $L^{loc}(\Omega)$ to $L^{r-loc}(G\times\Omega)$ 
(the space of all measurable functions on $(G\times \Omega)$ that are integrable on measurable rectangles of finite measure).
\item The space $L^{r-loc}(G\times \Omega)$ can then be shown to be a copy of the Bochner type space $L^{loc}(\Omega, L^{loc}(G))$ with the operator $T$ inducing an 
operator $\widetilde{T}$ from $L^{loc}(\Omega, L^{loc}(G))$ to $L^{loc}(\Omega, C(G))$ by means of composition. 
\item With $\delta_e$ denoting point evaluation at the group unit $e$, $\delta_e\otimes \I$ will now map 
$C(G)\widehat{\otimes}_\pi L^{loc}(\Omega)\equiv L^{loc}(\Omega, C(G))$ to $L^{loc}(\Omega)$.
\item The composition $(\delta_e\otimes\I) \circ \widetilde{T} \circ \iota_{\alpha}$ then yields a linear map $T^\#$ from $L^{1+\infty}(\Omega)$ into $L^{loc}(\Omega)$.
\end{itemize}

With $T^\#$ computed the next task is compute the weak/strong type of $T^\#$ from that of $T$. Once that is a known a version of the Banach principle can be used to extract 
ergodic convergence type theorems for sequences $(T_n^\#)$ of such transferred operators.

In the noncommutative world there is however as yet no satisfying non-commutative analogue of the space $L^{loc}$. Our starting point will therefore be the space 
$L^{1+\infty}(G)$, with the objective being to construct a related operator on $L^{1+\infty}(\M,\tau)$. 
\begin{quote}\emph{So for us the domain of a transferable operator will not be $L^{loc}(G)$, but $L^{1+\infty}(G)$.}\end{quote}
By analogy with the classical setting we formally define a map $\otimes_\alpha$ on $L^{1+\infty}(G)\otimes L^{1+\infty}(\M,\tau)$ by the prescription 
$(f\otimes_\alpha a)(g)= f(g)\alpha_g(a)$. For this to make sense we need to see when terms of the form $(f\otimes_\alpha a)$ even belong to $L^{1+\infty}(\M,\tau)$. A useful 
fact that that will aid us in this regard is the fact that for any semifinite von Neumann algebra $\M$ equipped with an fns trace $\tau$ we have that 
$L^{1+\infty}(\M,\tau)= L^\infty(\M,\tau) + L^1(\M,\tau)$ (see \cite[Proposition 2.5]{DDdP-Kothe}). Thus if we are given $f\in L^{1\cap\infty}(G)$ and 
$a \in L^{1+\infty}(\M,\tau)$, then on writing $a$ as $a=a_\infty+a_1$ where $a_\infty\in L^{\infty}(\M,\tau)$ and $a_1\in L^{1}(\M,\tau)$, it then follows from the 
discussion in subsection \ref{vNA_MG} that $f\otimes a_\infty \in L^\infty(G)\overline{\otimes}\M$ with $f\otimes a_1\in L^1(G)\otimes_\pi L^1(\M,\tau)\subset 
L^1(L^\infty(G)\overline{\otimes}\M)$. That in turn ensures that $f\otimes a =(f\otimes a_\infty)+(f\otimes a_1)\in L^{1+\infty}(L^\infty(G)\overline{\otimes}\M)$. The 
following Lemma now provides the information we need to conclude this analysis of the bilinear map $\otimes_\alpha$. 

\begin{lemma}\label{2.4} For every $f\in L^{1\cap\infty}(G)$ and $a\in L^{1+\infty}(\M)$ the rearrangements of $f\otimes a$ and $f\otimes_\alpha a$ are equimeasurable 
elements of $L^{1+\infty}(L^\infty(G)\overline{\oplus}\M)$.
\end{lemma}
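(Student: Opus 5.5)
To prove Lemma \ref{2.4} I would show directly that $f\otimes a$ and $f\otimes_\alpha a$ have the same distribution function with respect to the trace $\int_G\otimes\tau$ on $\N=L^\infty(G)\overline{\otimes}\M$. Equal distribution functions give equal decreasing rearrangements, since $\decra(t,\cdot)$ is defined purely in terms of $d_{(\cdot)}$. Membership of $f\otimes_\alpha a$ in $L^{1+\infty}(\N)$ would then follow at once. The reason is that $f\otimes a\in L^{1+\infty}(\N)$, as noted just before the lemma, and $L^{1+\infty}$ is rearrangement invariant: it is determined by $\int_0^1\decra(s,\cdot)\,ds<\infty$.

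The first step is to identify $|f\otimes_\alpha a|$ pointwise. Since the $\alpha_g$ are $*$-automorphisms of $\tM$, we have $(f\otimes_\alpha a)(g)=f(g)\alpha_g(a)$, so its modulus at $g$ is $|f(g)|\,\alpha_g(|a|)$. At the same point, $|f\otimes a|(g)=|f(g)|\,|a|$. Applying the pointwise spectral calculus (the propositions on positive affiliated operators, in particular Proposition \ref{Borel-unbd}, which gives $\chi_B(b)(g)=\chi_B(b(g))$ for a.e.\ $g$), we get for each $\lambda>0$
\[
\chi_{(\lambda,\infty)}(|f\otimes_\alpha a|)(g)=\chi_{(\lambda,\infty)}\bigl(|f(g)|\alpha_g(|a|)\bigr)=\alpha_g\bigl(\chi_{(\lambda,\infty)}(|f(g)|\,|a|)\bigr).
\]
The second equality holds because automorphisms commute with the Borel functional calculus. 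The trace formula $(\int_G\otimes\tau)(b)=\int_G\tau(b(g))\,dg$, established for positive elements of $\N$ and extended to projections by normality, now gives
\[
d_{f\otimes_\alpha a}(\lambda)=\int_G\tau\Bigl(\alpha_g\bigl(\chi_{(\lambda,\infty)}(|f(g)||a|)\bigr)\Bigr)\,dg=\int_G\tau\bigl(\chi_{(\lambda,\infty)}(|f(g)||a|)\bigr)\,dg=d_{f\otimes a}(\lambda).
\]
The middle equality uses $\tau\circ\alpha_g=\tau$. This is the required equality of distribution functions.

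The main obstacle is to justify that $f\otimes_\alpha a$ is a genuine element of $\tM$-type affiliated to $\N$. That is, $g\mapsto f(g)\alpha_g(a)$ must define a measurable operator-valued function in the sense of \cite[Definition IV.7.7]{Tak}, so that the pointwise formulas above apply to it. I would handle this by splitting $a=a_\infty+a_1$ via \cite[Proposition 2.5]{DDdP-Kothe}.

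For $a_\infty\in\M$, the function $g\mapsto f(g)\alpha_g(a_\infty)$ is bounded. It is also weak$^*$ measurable, because $g\mapsto\alpha_g(a_\infty)$ is point-$\sigma$-strong$^*$ continuous by the second lemma of Subsection \ref{vNA-gac}. Hence it lies in $L^\infty(G)\overline{\otimes}\M$ by \cite[Theorem IV.7.17]{Tak}.

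For $a_1\in L^1(\M)$, the map $g\mapsto\alpha_g(a_1)$ is norm continuous into $L^1(\M)$ by the first lemma of that subsection. Multiplying by $f\in L^1(G)$ therefore gives a strongly measurable Bochner function in $L^1(G;L^1(\M))\subset L^1(\N)$, whose norm is $\|f\|_1\|a_1\|_1$ by trace invariance.

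Summing the two pieces places $f\otimes_\alpha a$ in $L^{1+\infty}(\N)$. The pointwise description of its modulus and spectral projections then follows from the propositions of Subsection \ref{vNA_MG}, and the computation above completes the proof. One technical point deserves care: the pointwise identity for $|f\otimes_\alpha a|$ for the sum must be checked almost everywhere. I would verify this by working with the strong sum in $\widetilde{\N}$ and evaluating at almost every $g$.
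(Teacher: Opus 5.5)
Your proposal is correct and follows essentially the same route as the paper. Like the paper, you identify the spectral projections of $|f\otimes_\alpha a|$ fibrewise as those of $|f(g)|\,\alpha_g(|a|)$, use $\tau\circ\alpha_g=\tau$ to match their traces with those of $|f(g)|\,|a|$, and integrate over $G$ via the trace formula on $L^\infty(G)\overline{\otimes}\M$. The differences are minor: you commute $\alpha_g$ with the functional calculus rather than rescaling to $\chi_{(\lambda/|f(s)|,\infty)}$, which avoids the case $f(s)=0$, and you spell out the measurability and $L^{1+\infty}$-membership via $a=a_\infty+a_1$, which the paper leaves as an exercise or to the surrounding text.
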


\begin{proof} Given $\lambda>0$ we leave it as an exercise to verify that
$$\chi_{(\lambda,\infty)}(|f\otimes a|)(s)=\chi_{(\lambda,\infty)}(|f(s)a|)\mbox{ and }\chi_{(\lambda,\infty)}(|f\otimes_\alpha a|)(s)= 
\chi_{(\lambda,\infty)}(|f(s)\alpha_s(a)|).$$On passing to the Borel functional calculus, it is clear that we may assume that $f(t)\neq 0$ on the spectral subspaces 
corresponding to $\chi_{(\lambda,\infty)}(|f(s)a|)$ and $\chi_{(\lambda,\infty)}(|f(s)a|)$ respectively. Therefore 
$$\chi_{(\lambda,\infty)}(|f(s)a|)= \chi_{((\lambda/|f(s)|),\infty)}(|a|)\mbox{ and }\chi_{(\lambda,\infty)}(|f(s)\alpha_s(a)|) = 
\chi_{((\lambda/|f(s)|),\infty)}(|\alpha_s(a)|).$$Now observe that the fact $\tau\circ\alpha_s=\tau$, ensures that $a$ and $\alpha_s(a)$ are equimeasurable, which then 
ensures that $$\tau(\chi_{(\lambda,\infty)}(|f(s)a|)) =\tau(\chi_{((\lambda/|f(s)|),\infty)}(|a|))= \tau(\chi_{((\lambda/|f(s)|),\infty)}(|\alpha_s(a)|) = 
\tau(\chi_{((\lambda),\infty)}(|f(s)\alpha_s(a)|)$$ which in turn leads to the conclusion that 
\begin{eqnarray*}
\left(\int_G\circ\tau\right)(\chi_{(\lambda,\infty)}(|f\otimes a|)) &=& \int_G\tau(\chi_{(\lambda,\infty)}(|f(s)a|))\,ds\\
&=& \int_G\tau(\chi_{(\lambda,\infty)}(|f(s)\alpha_s(a)|))\,ds\\
&=& \left(\int_G\circ\tau\right)(\chi_{(\lambda,\infty)}(|f\otimes_\alpha a|)).
\end{eqnarray*}
This then proves the claim.
\end{proof}

Given $f\in L^{1\cap\infty}(G)$ and $a\in L^{1+\infty}(\M)$, it now immediately follows from the basic axioms of rearrangement invariant Banach function spaces that both 
$f\otimes a$ and $f\otimes_\alpha a$ belong to $L^{1+\infty}(L^\infty(G)\overline{\otimes}\M)$ and that in fact $\|f\otimes a\|_{1+\infty}=\|f\otimes_\alpha a\|_{1+\infty}$.

Given any compact neighbourhood $K$ of $e\in G$, the embedding $j_K:L^{1+\infty}(\M,\tau)\to L^{1+\infty}(L^\infty(G)\overline{\otimes}\M):a\to \chi_K\otimes_\alpha a$, where 
$(\chi_K\otimes_\alpha a)(g)= \chi_K(g)\alpha_g(a)$ for each $g\in G$, is therefore well-defined. The next step would then be to show that the map $T\otimes \mathrm{Id}$ 
extends to a map $\widetilde{T}$ on $L^{1+\infty}(L^\infty(G)\overline{\otimes}\M)$ which then gets composed with the above embedding. As before a version of point-evaluation 
will then yield a ``localised'' transferred operator $T^\#_K$. Formally the operator $T^\#_K$ is then of the form 
$$a\to \chi_K\otimes_\alpha a\to \widetilde{T}(\chi_K\otimes_\alpha) \to (\delta_e\otimes \mathrm{Id})\widetilde{T}(\chi_K\otimes_\alpha).$$If in conclusion we can show that 
as $K$ increases to $G$ the operators $T^\#_K$ actually converge to a unique operator, that operator then qualifies to be called the transferred operator $T^\#$ of $T$. Once 
the existence of the transferred operator has been established, the next task would then be to compute the weak/strong type of $T^\#$ from that of $T$. 

In the most important classical cases, transferable operators are of the form $T_k:f\to k*f$ where $k$ is a positive integrable bounded function of the form 
$k=\frac{1}{|U|}\chi_U$ and $U\subset G$ a Borel set of finite measure. Unless otherwise stated we shall therefore restrict ourselves to such operators where we will 
typically assume that $k\in L^{1\cap\infty}(G)_+$ with $\|k\|_\infty\leq 1$ and $\|k\|_1=1$. As we shall shortly see such an operator $T$ will induce a positive identity 
preserving operator on $L^\infty(G)$ for which we have that $\int_G T(f)\,dt\leq \int_Gf\,dt$. Since we are here in a ``commutative'' context, $T$ is known to then in fact be 
completely positive. It then follows from Theorem 5.1 of \cite{HJX} that the map $\widetilde{T}=T\otimes\mathrm{Id}$ is an identity preserving positive operator on 
$L^\infty(G)\overline{\otimes}\M$ for which we have that $(\int_G\circ\tau)\circ (T\otimes \mathrm{Id}) \leq (\int_G\circ\tau)$. Since here $L^\infty(G)\overline{\otimes}\M$ 
is in fact semifinite (with fns trace $(\int_G\otimes \tau)$), we may then use real interpolation \cite[\S 7.2]{GLbook} to extend the claims of \cite[Theorem 5.1]{HJX} to 
the claim that for any monotone Riesz-Fischer norm $\bm{\rho}$ we have that $\widetilde{T}$ extends to a contractive map on $L^{\bm{\rho}}(\M\overline{\otimes}L^\infty(G))$. 
Note that Theorem 5.1 of \cite{HJX} also ensures that $\widetilde{T}$ is in fact normal on $\M\overline{\otimes}L^\infty(G)$! For the specific case where $L^{\bm{\rho}} 
=L^X=L^Y$ for some Young functions $X$ and $Y$, the weak/strong type of $T^\#$ can then be extracted as follows:
\begin{itemize}
\item For Young functions $A$ and $B$, we firstly need to make sense of spaces of the form $L^B(G;L^A(\M))$ and find criteria under which such spaces continuously embed into 
$L^X(L^\infty(G)\overline{\otimes}\M)$. For any fixed compact neighbourhood $K$ of $e\in G$, the prescription $L^A(\M)\to L^B(G;L^A(\M)):f\mapsto \chi_K\otimes f$ can then be 
seen to yield a continuous embedding of $L^A(\M)$ into $L^B(G;L^A(\M))$ and hence also into $L^X(L^\infty(G)\overline{\otimes}\M)$.
\item We next note that the equi-measurability of twisted tensor $\chi_K\otimes_\alpha f$ to $\chi_K\otimes f$, ensures that the map $\chi_K\otimes f \mapsto 
\chi_K\otimes_\alpha f$ is a linear isometry on the subspace of $L^X(\M\overline{\otimes}L^\infty(G))$ generated by these simple tensors. 
\item The final step is to then apply the map $\widetilde{T}$ to this subspace of $L^X(L^\infty(G)\overline{\otimes}\M)$ and to use a noncommutative version of 
\cite[Theorem 4.3]{L-QM} to extract the weak/strong type of $T^\#$.
\end{itemize}

\subsection{Emerging challenges} 
The challenges we face are then as follows: We firstly need to prove the actual existence of the map $T^\#$. As far as the protocol for computing the weak/strong type of 
$T^\#$ is concerned we need to start with checking that maps of the type $T_k:f\to k*f$ where $k\in L^{1\cap\infty}(G)_+$ with $\|k\|_\infty\leq 1$ and $\|k\|_1=1$ are indeed 
completely positive. This we do below. We then next need to make sense of spaces of the type $L^B(G;L^A(\M))$ and investigate conditions under which they embed in spaces of 
the form $L^X(L^\infty(G)\overline{\otimes}\M)$. We finally need to develop the technology that will allow us to prove a noncommutative version of \cite[Theorem 4.3]{L-QM}. 
We start this journey of discovery by verifying the claims made about transferable operators induced by convolution.

\begin{proposition}\label{T-posop} Let $k$ be a non-negative integrable bounded function for which we have that $\|k\|_\infty, \|k\|_1\leq 1$. Then $T_k:f\to k*f$ defines a 
positive operator on on $L^\infty(G)$ for which we have that $T(1)\leq 1$ and $\int_G\circ T\leq \int_G$ on $L^\infty_+$.
\end{proposition}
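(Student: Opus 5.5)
The plan is to write the convolution out explicitly and read off each of the three properties from Fubini--Tonelli and the invariance properties of the Haar measure. On a locally compact group with left Haar measure I take $(k*f)(s)=\int_G k(t)f(t^{-1}s)\,dt$. For $f\in L^\infty(G)$ and $k\in L^1(G)$ this integral converges absolutely for every $s$, and $|k*f(s)|\leq \|k\|_1\|f\|_\infty$. Hence $T_k$ maps $L^\infty(G)$ into $L^\infty(G)$; the standard argument in fact gives bounded left uniformly continuous functions, though that is not needed here. The map is clearly linear.

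Positivity is immediate. If $f\geq 0$ a.e., then for each $s$ the function $t\mapsto f(t^{-1}s)$ is non-negative for almost every $t$, by left invariance and inversion-quasi-invariance of Haar null sets. Since $k\geq 0$, this gives $k*f\geq 0$. For the unit, $T_k(1)(s)=\int_G k(t)\,dt=\|k\|_1\leq 1$. Note that only $\|k\|_1\leq 1$ is used here; the bound $\|k\|_\infty\leq 1$ is only needed to ensure that $k\in L^{1\cap\infty}$, which is used later.

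The trace inequality is the only step needing care, because the modular function $\Delta$ enters. For $f\in L^\infty_+$ with $\int_G f<\infty$ (otherwise there is nothing to prove), Tonelli gives
$$\int_G (k*f)(s)\,ds=\int_G k(t)\int_G f(t^{-1}s)\,ds\,dt=\int_G k(t)\,dt\int_G f(s)\,ds\leq \int_G f,$$
where the inner integral is evaluated using left invariance of $ds$ under $s\mapsto t^{-1}s$.

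With the convention $k*f(s)=\int k(t)f(t^{-1}s)\,dt$, no modular factor appears. This is the main point to get right. Had the convolution been written as $\int f(t)k(t^{-1}s)\,dt$, the same Tonelli computation would again give $\|k\|_1\int f$, since $\int_G k(t^{-1}s)\,ds=\int k$. By contrast, right-translation forms would introduce $\Delta(t^{-1})$. So I fix the left form, consistent with the translation invariance $\lambda_g T=T\lambda_g$ in the definition of transferable operators, which this form indeed satisfies. Joint measurability of $(t,s)\mapsto k(t)f(t^{-1}s)$, needed for Tonelli, follows from the measurability of $(t,s)\mapsto t^{-1}s$, using $\sigma$-compactness of $G$ (hence $\sigma$-finiteness of Haar measure) as assumed in the paper.
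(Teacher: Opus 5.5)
Your proof is correct. Positivity and $T_k(1)=\|k\|_1\le 1$ are obtained exactly as in the paper, but you handle the integral inequality differently. The paper first treats $f\in L^\infty_+$ with $\int_G f<\infty$ via $\|k*f\|_1\le\|k\|_1\|f\|_1$. It then spends most of its proof on general $f\in L^\infty_+$: it approximates $f$ by a net $(f_\alpha)\subset(L^1\cap L^\infty)_+$ increasing to $f$, and shows $\sup_\alpha T(f_\alpha)=T(f)$ by computing the pre-adjoint, $\langle T(h),x\rangle=\int_G h\,(k^{*}*x)$ for $x\in L^1(G)$. You simply note that the case $\int_G f=\infty$ is trivial. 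In fact, since Tonelli works in $[0,\infty]$, your computation gives the identity $\int_G T_k f=\|k\|_1\int_G f$ for every $f\in L^\infty_+$ with no case split at all. For the statement as written, the paper's approximation step is therefore redundant, and your argument is the more economical one. What the paper's detour buys is an extra fact not contained in the proposition: $T_k$ is normal, being the adjoint of $x\mapsto k^{*}*x$ on $L^1(G)$. That is the property relevant to the subsequent appeal to \cite[Theorem 5.1]{HJX} for $T\otimes\mathrm{Id}$, so if you want your argument to serve that later purpose you would need to add this duality computation.

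One aside in your last paragraph is incorrect, although nothing in the proof depends on it. For non-abelian $G$, the operator $f\mapsto k*f=\int_G k(t)\lambda_t f\,dt$ does not in general commute with $\lambda_g$. Indeed $\lambda_g(k*f)=(\lambda_g k)*f$, whereas $k*(\lambda_g f)=\int_G k(t)\lambda_{tg}f\,dt$, and these differ unless $k$ has special symmetry. The operator commutes with right translations instead. The form $f\mapsto f*k$ does commute with left translations, but for it $T(1)=\int_G k(v)\Delta(v^{-1})\,dv$, which can exceed $1$ on non-unimodular groups.
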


\begin{proof} For any $f\in L^\infty_+$ we clearly have that $(k*f)(t)=\int_G k(s)f(s^{-1}t)\,dy$ is a non-negative function. If $f=1$, then $T(1)(t)=(k*1)(t)= 
\int_G k(s)\,ds\leq 1$. Regarding the final claim we first observe that for any $f\in L^\infty_+$ with $\int_Gf\,dt<\infty$ we have that $\int_GT(f)\,dt = 
\int_Gk*f\,dt\leq \infty =\|k*f\|_1\leq \|k\|_1\|f\|_1\leq \int_Gf\,dt$. Given an arbitrary $f\in L^\infty_+$, select a net 
$(f_\alpha)\subset(L^\infty\cap L^1)_+$ increasing to $f$ (in the von Neumann algebraic sense). We know that then $\int_GT(f_\alpha)\,dt \leq \int_Gf_\alpha\,dt \leq 
\int_Gf\,dt$ for each $\alpha$. So if we can show that $\sup_\alpha\int_GT(f_\alpha)\,dt =\int_GT(f)\,dt$, we are done. Let $x\in L^1(G)_+$ be given. Here $x$ corresponds to 
a positive normal functional on $L^\infty$. We will for any $h\in L^\infty$ then have that 
$$\langle T(h), x\rangle =\int_G (k*h)(s)x(s)\,ds = \int_G x(s)\int_Gk(t)h(t^{-1}s)\,dt\,ds = \int_G k(t)\int_Gx(s)h(t^{-1}s)\,ds\,dt.$$Now set $s=tu$ (equivalently 
$t=us^{-1}$ and use the left-invariance of Haar measure to see that we will on first setting $u=t^{-1}s$ and then $v=t^{-1}$, have that 
$$\int_G k(t)\int_Gx(s)h(t^{-1}s)\,ds\,dt= \int_G k(t)\int_Gx(tu)h(u)\,du\,dt =\int_G h(u)\int_Gk(t)x(tu)\,dt\,du =$$ 
$$\int_G h(u)\int_Gk(t)x(tu)\Delta(t)\,d(t^{-1})\,du = \int_G h(u)\int_Gk(v^{-1})x(v^{-1}u)\Delta(v^{-1})\,dv\,du.$$On defining $k^*$ as in \S 8.6.2 of \cite{GLbook} we 
therefore have that 
$$\langle T(h), x\rangle =\int_G k(t)\int_Gx(s)h(t^{-1}s)\,ds\,dt =\int_G h(u)\int_Gk^*(v)x(v^{-1}u)\Delta(v^{-1})\,dv\,du = \int_G h(u)(k^**x)(u)\,du.$$We leave it as an 
exercise to verify that $(f_\alpha(k^**x))$ increases to $f(k^**x)$ from which it then follows that 
$$\sup_\alpha\langle T(f_\alpha), x\rangle =\sup_\alpha\int_G f_\alpha(u)(k^**x)(u)\,du =\int_G f(u)(k^**x)(u)\,du = \langle T(f), x\rangle.$$Since this equality holds for 
all positive normal functionals on $L^\infty$, we have that $\sup_\alpha\langle T(f_\alpha) = T(f)$ which proves the claim.
\end{proof}

\subsection{Strategies for deriving pointwise ergodic convergence theorems} 
We turn to the derivation of pointwise ergodic theorems. Given a sequence $(T^\#_n)$ of transferred operators, our first task will be to define some sort of maximal operator 
$\mathcal{T}^\#_{max}$ induced by this sequence. The objective is to show that if this maximal operator is say weak type $(X,Y)$, the transferred operators $T^{\#}_n$ will  
then satisfy an ergodic convergence theorem with $(T^{\#}_nf(\omega))\subset M_Y(\M)$ converging in the $\sigma(M_Y(\M),\Lambda_{Y^*}(\M))$-topology for every $f\in L^X(\M)$. 
The strategy for achieving this is as follows:
\begin{enumerate}\item We shall under acceptable restrictions firstly identify a dense subset $D$ in the domain of $\mathcal{T}^\#_{max}$ for which the convergence of 
$(T^{\#}_nf)$ can be verified for all $f\in D$.
\item We shall then next use an appropriate Banach type Principle to extend the convergence of step (1) to the full domain of $T^{\#}$.
\end{enumerate}

\part{Existence of the noncommutative transfer operator}

\section{Constructing the quantum transfer operator}\label{deftran}

For the sake of access to real interpolation, the transferred operator $T^{\#}$ should ideally have $L^{1+\infty}(\M)$ as its domain. As noted in subsection \ref{CTP}, this 
requirement forces us to construct the transferred operator on $L^{1+\infty}(\M,\tau)$ in a somewhat indirect way. We start our analysis by first collecting some known facts 
regarding the projective tensor product of complete locally convex spaces, after which we introduce a new locally convex topology on $L^{1+\infty}(\M)$. 

\begin{proposition}
Let $E_0$ and $E_1$ be complete locally convex spaces whose topologies are respectively produced by the classes of seminorms $\{p_\alpha: \alpha\in \Lambda\}$ and 
$\{q_\beta: \beta\in \Gamma\}$. Next let $X_\alpha$ and $Y_\beta$ respectively be the completions of the spaces $(E_0/[p_\alpha^{-1}(0)], \widehat{p}_\alpha)$ and 
$(E_1/[q_\beta^{-1}(0)], \widehat{q}_\beta)$ where $\widehat{p}_\alpha$ and $\widehat{q}_\beta$ are the norms canonically induced by $p_\alpha$ and $q_\beta$. We write 
$E_0\widetilde{\otimes}_\pi E_1$ for the completion of algebraic tensor product $E_0{\otimes} E_1$ equipped with the projective topology. The topology on 
$E_0\widetilde{\otimes}_\pi E_1$ is produced by the seminorms $\pi_{\alpha, \beta} = p_\alpha\otimes q_\beta$, and under this topology, $E_0\widetilde{\otimes}_\pi E_1$ may 
also be realised as the projective limit of the tensor products $X_\alpha\widetilde{\otimes}_\pi Y_\beta$. In the case that $E_0=L^1(\Omega, \Sigma,\nu)$ where 
$(\Omega, \Sigma, \nu)$ is a $\sigma$-finite measure space, the seminorms producing the topology of $E_0\widetilde{\otimes}_\pi E_1$ are just $\pi_{\beta}(\cdot) = 
\int_\Omega q_\beta(\cdot)\, d\nu$ ($\beta\in\Gamma$). 
\end{proposition}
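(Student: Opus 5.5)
The plan is to assemble the proposition from three standard facts about projective tensor products of locally convex spaces (Grothendieck, Köthe, Treves). We then add a reduction to the Banach case for the final $L^1$ claim.

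\emph{Step 1: the generating seminorms.} By definition, the projective topology on $E_0\otimes E_1$ is the finest locally convex topology making the canonical bilinear map $E_0\times E_1\to E_0\otimes E_1$ continuous. A base of continuous seminorms is given by
$$\pi_{\alpha,\beta}(u)=\inf\Big\{\sum_i p_\alpha(x_i)q_\beta(y_i)\colon u=\sum_i x_i\otimes y_i\Big\}.$$
Here we may assume the families $\{p_\alpha\}$ and $\{q_\beta\}$ are directed, replacing them by finite maxima if necessary. Each $\pi_{\alpha,\beta}$ extends continuously to the completion $E_0\widetilde\otimes_\pi E_1$. Since the completion carries the topology of uniform convergence defined by the extensions, this gives the first claim.

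\emph{Step 2: the projective limit description.} Let $q_\alpha\colon E_0\to X_\alpha$ and $r_\beta\colon E_1\to Y_\beta$ be the canonical maps. Their tensor products $q_\alpha\otimes r_\beta$ extend to continuous maps $E_0\widetilde\otimes_\pi E_1\to X_\alpha\widetilde\otimes_\pi Y_\beta$. The key identity to check is
$$\pi_{\alpha,\beta}(u)=\|(q_\alpha\otimes r_\beta)u\|_{X_\alpha\widetilde\otimes_\pi Y_\beta}.$$
The inequality "$\ge$" is clear. For "$\le$", one lifts representations of $(q_\alpha\otimes r_\beta)u$ in the dense subspaces $E_0/p_\alpha^{-1}(0)$ and $E_1/q_\beta^{-1}(0)$. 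One must also use that the projective norm of an element of the algebraic tensor product of dense subspaces may be computed from representations within those subspaces. This is the standard fact that the projective norm is compatible with completion.

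Given this identity, the family of maps $q_\alpha\otimes r_\beta$ is compatible with the connecting maps $X_{\alpha'}\widetilde\otimes_\pi Y_{\beta'}\to X_\alpha\widetilde\otimes_\pi Y_\beta$. It therefore induces a topological embedding of $E_0\widetilde\otimes_\pi E_1$ into the projective limit. Since $E_0\widetilde\otimes_\pi E_1$ is complete and its image is dense, the embedding is onto. I expect this surjectivity/density argument to be the main obstacle. I would handle it via the general theorem that a complete locally convex space equals the projective limit of the completions of its normed quotients, applied to $E_0\widetilde\otimes_\pi E_1$ itself, with Step 1 identifying those quotients.

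\emph{Step 3: the $L^1$ case.} Take $E_0=L^1(\Omega,\Sigma,\nu)$, which is a Banach space, so only one seminorm is needed and $X=L^1$. By Step 2, the space is the projective limit of the spaces $L^1(\Omega)\widetilde\otimes_\pi Y_\beta$. By the known isometry recalled in the Preliminaries, each of these equals the Bochner space $L^1(\Omega;Y_\beta)$, with norm $F\mapsto\int_\Omega\|F(\omega)\|_{Y_\beta}\,d\nu$.

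Composing with $r_\beta$, an element $u$ of the algebraic tensor product, viewed as an $E_1$-valued simple-type function, satisfies
$$\pi_\beta(u)=\int_\Omega q_\beta(u(\omega))\,d\nu(\omega).$$
This identity extends to the completion by continuity and Fatou-type arguments, using $\sigma$-finiteness to ensure strong measurability of the limits. This gives the stated form of the seminorms.
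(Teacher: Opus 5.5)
Your argument is correct and follows the paper's route. The paper simply cites Jarchow for the facts you prove by hand: that a complete space is the reduced projective limit of its local Banach spaces, and that $\widetilde{\otimes}_\pi$ commutes with such limits (\cite[2.9.1, 3.4.7, 15.4.2]{Jar}), which is precisely your identity $\pi_{\alpha,\beta}(u)=\|(q_\alpha\otimes r_\beta)u\|_\pi$. It then cites p.~339 of \cite{Jar} for the $L^1$ formula that you derive from the Banach case $L^1(\Omega)\widetilde{\otimes}_\pi Y_\beta=L^1(\Omega;Y_\beta)$.

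The one loose point is your final sentence. Gluing limits into a strongly measurable $E_1$-valued function by ``Fatou plus $\sigma$-finiteness'' is not justified when $E_1$ is non-metrizable, as with the space $W$ used later, because elements of the completion are then limits of nets rather than sequences. It is also unnecessary. The generating seminorms on the completion are just the continuous extensions of $\int_\Omega q_\beta(\cdot)\,d\nu$ from $L^1\otimes E_1$. For any element represented by an $E_1$-valued function compatible with every level $\beta$, the formula follows directly from the isometry at that level.
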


\begin{proof}
By \cite[2.9.1 \& 3.4.7]{Jar} $E_0$ and $E_1$ are respectively reduced projective limits of the spaces $X_\alpha$ and $Y_\beta$. Hence by \cite[15.4.2]{Jar}, the space 
$E_0\widetilde{\otimes}_\pi E_1$ is a reduced projective limit of the spaces $(E_0/[p_\alpha^{-1}(0)], \widehat{p}_\alpha) 
\widetilde{\otimes}_\pi (E_1/[q_\beta^{-1}(0)],\widehat{q}_\beta)$. Apart from the final claim regarding the case $E_0=L^1(\Omega, \Sigma,\nu)$, the balance of the claims 
follow from the discussion in \cite[\S 15.4]{Jar}. The final claim regarding the seminorms in case $E_0=L^1(\Omega, \Sigma,\nu)$, follows from the discussion on page 339 of 
\cite{Jar}.  
\end{proof}

Given a projection $e\in \M$ with $\tau(e)<\infty$, we will for any $a\in \M$ then have that $|eae|^2\leq \|eae\|^2e\leq \|a\|^2e$. Thus clearly 
$\tau(|eae|)\leq \|a\|\tau(e)<\infty$. In other words $eae\in L^1(\M)$ for any $a\in \M$. In view of this fact we now introduce the topology of (bilateral) local 
integrability on $L^{1+\infty}(\M)$.

\begin{definition} 
With each projection $e\in \M$ with $\tau(e)<\infty$, we associate the seminorm $p_e(a)=\|eae\|_1$ ($a\in L^{1+\infty}(\M)$). It is an exercise to see that the topology 
produced by these seminorms is Hausdorff. We shall call the locally convex topology produced on $L^{1+\infty}(\M)$ by these seminorms, the topology of (bilateral) local 
integrability on $L^{1+\infty}(\M)$, and will where convenient write (\emph{loc}) for this topology. In essence convergence in this topology corresponds to convergence in 
$L^1(e\M e)$ for each projection $e$ with $\tau(e)<\infty$.
\end{definition}

\begin{theorem}
Let $\bm{\rho}$ be a Banach function norm on $M^0(\mathbb{R}_+)$ with the Fatou property. Then the closed unit ball of $L^{\bm{\rho}}(\M,\tau)$ is complete under the topology 
of (bilateral) local integrability.
\end{theorem}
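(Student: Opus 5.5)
Let $(a_\gamma)$ be a (loc)-Cauchy net in the closed unit ball $B$ of $L^{\bm{\rho}}(\M,\tau)$. The plan is to produce a limit $a\in\tM$ by working corner by corner, to show that the corners fit together and that $a_\gamma\to a$ in measure on each corner, and then to use the Fatou property of $\bm{\rho}$ (transferred to $\M$ via \cite{DDdP}) to place $a$ in $B$. At the end we check that $a_\gamma\to a$ in (loc).

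\textbf{Corners and a candidate limit.} Fix a projection $e$ with $\tau(e)<\infty$. Then $(ea_\gamma e)$ is Cauchy in $L^1(e\M e)$, which is complete, so it has a limit $b_e\in L^1(e\M e)$. For $f\leq e$ we have $f(ea_\gamma e)f=fa_\gamma f$, and $x\mapsto fxf$ is $L^1$-contractive. Hence $fb_ef=b_f$, so the family $(b_e)$ is compatible.

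To build $a$, I would first establish a uniform bound. Since $\bm{\rho}(\decra(\cdot,a_\gamma))\leq 1$, the embedding $L^{\bm{\rho}}\subset L^{1+\infty}$ gives $\|a_\gamma\|_{1+\infty}\leq C$. It follows that the $a_\gamma$ form a bounded set in $\tM$ for the measure topology: $\decra(t,a_\gamma)\leq C'/t$ uniformly in $\gamma$.

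The main obstacle is now to pass from the compatible corner limits $b_e$ to a single operator $a\in\tM$. Two routes seem available.

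\emph{Route 1.} Use that $L^{1+\infty}(\M)$ is the Köthe dual of $L^{1\cap\infty}(\M)$. Define a functional $\phi$ on the dense set $\bigcup_e eL^{1\cap\infty}e$ by $\phi(x)=\tau(b_ex)$ for $x=exe$. The bound $\|b_e\|_{L^{1+\infty}(e\M e)}\leq\liminf_\gamma\|ea_\gamma e\|_{1+\infty}\leq C$ then shows that $\phi$ is bounded for the $L^{1\cap\infty}$-norm. Here the liminf inequality comes from $L^1$-convergence together with the Fatou property of the $L^{1+\infty}$ norm. One would then extend $\phi$ and represent it by some $a\in L^{1+\infty}(\M)$ with $eae=b_e$.

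\emph{Route 2.} Argue via $\sigma$-weak compactness. Take a projection $e$ and use the uniform measure bound to extract a subnet converging in the weak topology $\sigma(L^{1+\infty},L^{1\cap\infty})$, whose bounded sets are relatively compact by duality. Any such limit $a$ must satisfy $eae=b_e$, because $L^1$-convergence of corners implies weak convergence of corners. Moreover $a$ is unique, because the elements $exe$ separate $L^{1+\infty}$: if $eae=0$ for all finite $e$, then $a=0$. Uniqueness makes subnet issues disappear.

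I expect Route 2 to be cleaner. Either way, this gluing step is where care is needed.

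\textbf{Membership in the unit ball.} It remains to show $\bm{\rho}(\decra(\cdot,a))\leq1$. For a finite projection $e$, $ea_\gamma e\to eae$ in $L^1$ and hence in measure. By lower semicontinuity of $\decra$ under convergence in measure, $\decra(t,eae)\leq\liminf_\gamma \decra(t,ea_\gamma e)\leq\liminf_\gamma\decra(t,a_\gamma)$.

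This pointwise bound alone is not enough, since $\bm{\rho}$ of a liminf is not obviously controlled. I would therefore use the noncommutative Fatou property instead: for $x\in L^{\bm{\rho}}$, the Köthe reflexivity of \cite{DDdP-Kothe} gives
\[
\bm{\rho}(\decra(\cdot,x))=\sup\{|\tau(xy)|: y\in L^{\bm{\rho}'}\cap L^{1\cap\infty},\ \bm{\rho}'(y)\leq1\}.
\]
By density and the Fatou property, it suffices to take $y=eye$ with $e$ finite. For such $y$ the supremum formula gives $|\tau(a_\gamma y)|\leq 1$. Also $\tau(a_\gamma y)=\tau(ea_\gamma e\,y)\to\tau(eae\,y)=\tau(ay)$, since $ea_\gamma e\to eae$ in $L^1$ and $y$ is bounded. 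Hence $|\tau(ay)|\leq1$. Taking the supremum over $y$ and using Fatou to pass from the corners $eae$ (with $e\nearrow\I$) to $a$ itself gives $\bm{\rho}(\decra(\cdot,a))\leq1$.

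\textbf{Convergence in (loc).} Finally, $p_e(a_\gamma-a)=\|ea_\gamma e-b_e\|_1\to0$ for every finite $e$, so $a_\gamma\to a$ in the (loc) topology, and $B$ is complete.
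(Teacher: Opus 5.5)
\textbf{The gluing step does not go through.} You correctly identify gluing as the crux, but neither route closes it.

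Route 2 assumes that bounded subsets of $L^{1+\infty}(\M)$ are relatively $\sigma(L^{1+\infty},L^{1\cap\infty})$-compact. This is false. $L^{1+\infty}$ is only the K\"othe dual of $L^{1\cap\infty}$; the Banach dual $(L^{1\cap\infty})^*$ is strictly larger and contains singular functionals, so Banach--Alaoglu only yields cluster points in $(L^{1\cap\infty})^*$. For a concrete counterexample, take $\M=L^\infty(\mathbb{R})$ and $f_n=n\chi_{[0,1/n]}$, so $\|f_n\|_{1+\infty}=1$. Any subnet limit $f\in L^{1+\infty}$ would integrate to $0$ against every $\chi_{[a,b]}$ with $0\notin[a,b]$, forcing $f=0$. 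Yet $\int f_n\chi_{[0,1]}=1$ for all $n$, so the weak$^*$ cluster points are singular ``point masses at $0$''.

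Route 1 fails for the same reason. Extending $\phi$ by continuity gives an element of $(L^{1\cap\infty})^*$. Such a functional is given by some $a\in L^{1+\infty}$ only if it is normal, and K\"othe duality does not supply that.

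A smaller slip: the uniform bound should read $\decra(t,a_\gamma)\leq C\max(1,1/t)$, not $C'/t$; this is harmless.

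The paper never confronts this explicitly. After passing from $L^1(e\M e)$-convergence to convergence in measure on corners, it takes the limit from \cite[Corollary 4.5]{DDST}. That result rests on a noncommutative Yosida--Hewitt decomposition, which is exactly the ingredient your proposal is missing.

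\textbf{How to supply it.} You can do this with what you already have.
\begin{enumerate}
\item Let $\Phi\in(L^{1\cap\infty})^*$ be the continuous extension of $\phi$, or a weak$^*$ cluster point of $(a_\gamma)$.
\item Since $x\mapsto(x,x)$ embeds $L^{1\cap\infty}$ isometrically into $L^1(\M)\oplus_\infty\M$, Hahn--Banach gives $\Phi(x)=\tau(cx)+\psi(x)$ with $c\in\M$ and $\psi\in\M^*$. Write $\psi=\omega+\sigma$ with $\omega\in\M_*=L^1(\M)$ and $\sigma$ singular.
\item On each corner $e\M e$ with $\tau(e)<\infty$, the functionals $\Phi=\tau(b_e\,\cdot)$, $\tau(c\,\cdot)$ and $\omega$ are all normal. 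The restriction of $\sigma$ to $e\M e$ is singular. Hence $\sigma$ vanishes on every such corner.
\item The union $\bigcup_e eL^{1\cap\infty}e$ is norm dense in $L^{1\cap\infty}$: compress $x$ by $\chi_{(\epsilon,\infty)}(|x|)\vee\chi_{(\epsilon,\infty)}(|x^*|)$. So $\sigma=0$ on $L^{1\cap\infty}$.
\item Therefore $a=c+\omega\in L^{1+\infty}(\M)$ represents $\Phi$ and satisfies $eae=b_e$ for every finite $e$.
\end{enumerate}

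Your unit-ball argument then goes through, with two provisos. You need the K\"othe-reflexivity identity $\bm{\rho}(\decra(\cdot,x))=\sup\{\tau(|xy|)\colon y\in L^{1\cap\infty},\ \bm{\rho}'(\decra(\cdot,y))\leq 1\}$ for all $x\in L^{1+\infty}$, not only for $x\in L^{\bm{\rho}}$. You also need to justify reducing to $y=eye$; this follows from the same density together with $|\tau(xz)|\leq\|x\|_{1+\infty}\|z\|_{1\cap\infty}$.

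Once repaired, your proof is a self-contained duality argument that avoids local measure topologies. The paper's proof is shorter, but it outsources exactly this step to \cite{DDST}.
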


\begin{proof}
Let $\{f_\lambda\}$ be a Cauchy net in the closed unit ball of $L^{\bm{\rho}}(\M,\tau)$. Then for each projection $e\in \M$ with 
$\tau(e)<\infty$, we have that $\{ef_\lambda e\}$ is a Cauchy net in $L^1(e\M e)$, and hence converges to some element $g_e$ of $L^1(e\M e)$. But then by known results 
$\{ef_\lambda e\}$ will converge to $g_e$ in the topology of convergence in measure on $\widetilde{e\M e}$. However convergence in measure for each $\widetilde{e\M e}$ where 
$\tau(e)<\infty$ corresponds to the topology of local convergence in measure as defined in \cite{DDST}. But by \cite[Corollary 4.5]{DDST} the unit ball of 
$L^{\bm{\rho}}(\M,\tau)$ is complete in this topology. Hence there exists $f_0$ in the unit ball of $L^{\bm{\rho}}(\M,\tau)$ so that $\{ef_\lambda e\}$ converges in measure 
to $ef_0e$ for each $e$ with $\tau(e)<\infty$. This ensures that $ef_0e = g_e$ for each $e$ with $\tau(e)<\infty$, and hence that $\{f_\lambda\}$ converges to $f_0$ in the 
topology of (bilateral) local integrability.
\end{proof}

One more critical ingedient we need is a clear understanding of the continuity properties of the given group action with respect to the topology we have introduced.

\begin{proposition}
For each $a\in L^{1+\infty}(\M)$, the map $G\to L^{1+\infty}(\M):g\to\alpha_g(a)$ is point-(\emph{loc}) continuous.
\end{proposition}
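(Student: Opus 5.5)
Fix $a\in L^{1+\infty}(\M)$ and a projection $e\in\M$ with $\tau(e)<\infty$. We must show that $g\mapsto \|e\alpha_g(a)e-e\alpha_{g_0}(a)e\|_1$ tends to $0$ as $g\to g_0$. Since $\alpha_g(a)-\alpha_{g_0}(a)=\alpha_{g_0}(\alpha_{g_0^{-1}g}(a)-a)$, it suffices to treat continuity at $e_G$, with a finite-trace projection varying with $g_0$. Concretely, put $f=e$ and $e'=\alpha_{g_0^{-1}}(e)$, which has $\tau(e')=\tau(e)<\infty$ by trace invariance. Then
\[
\|e\alpha_{g_0}(b)e\|_1=\|\alpha_{g_0}(e'be')\|_1=\|e'be'\|_1,
\]
so it is enough to prove $p_{e'}(\alpha_h(a)-a)\to0$ as $h\to e_G$, for every finite-trace projection $e'$.

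To do this, use \cite[Proposition 2.5]{DDdP-Kothe} to split $a=a_\infty+a_1$ with $a_\infty\in\M$ and $a_1\in L^1(\M)$. The $L^1$ part is handled by the first lemma of subsection \ref{vNA-gac}: $h\mapsto\alpha_h(a_1)$ is norm continuous into $L^1(\M)$. Since $\|e'xe'\|_1\le\|x\|_1$, we get $p_{e'}(\alpha_h(a_1)-a_1)\to0$.

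The bounded part is the main obstacle, since $h\mapsto\alpha_h(a_\infty)$ is only $\sigma$-strong$^*$ continuous (by the second lemma of that subsection), not norm continuous. Write $x_h=\alpha_h(a_\infty)-a_\infty$. This net is norm bounded, by $2\|a_\infty\|$, and tends to $0$ $\sigma$-strongly$^*$. The plan is to estimate
\[
\|e'x_he'\|_1\le \|e'\|_2\,\|x_he'\|_2=\tau(e')^{1/2}\,\tau(e'x_h^*x_he')^{1/2}
\]
by H\"older's inequality. Then one must show $\tau(e'x_h^*x_he')\to0$. The map $y\mapsto\tau(e'ye')$ is a positive normal functional on $\M$, since $e'\in L^1(\M)$. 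Hence $\omega(y)=\tau(e'y e')$ defines a normal state (up to scaling), and $\omega(x_h^*x_h)\to0$ is precisely $\sigma$-strong convergence of $x_h$ to $0$ tested against $\omega$. This holds because $\sigma$-strong convergence means $\psi(x_h^*x_h)\to0$ for every normal positive functional $\psi$.

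Combining the two parts via the triangle inequality for $p_{e'}$ gives $p_{e'}(\alpha_h(a)-a)\to0$, which completes the argument. No uniformity in $e$ is needed, because the (loc) topology is generated by each seminorm separately.
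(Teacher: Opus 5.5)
Your proof is correct, but it handles the bounded part by a different route from the paper. Both arguments split $a=a_1+a_\infty$ and dispose of $a_1$ by point-norm continuity of the action on $L^1(\M)$.

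For $a_\infty$ the paper avoids any appeal to strong-operator continuity. It uses the algebraic identity $(\alpha_g(a_\infty)-a_\infty)e=\alpha_g(a_\infty)(e-\alpha_g(e))+(\alpha_g(a_\infty e)-a_\infty e)$. This gives $\|(\alpha_g(a_\infty)-a_\infty)e\|_1\le\|a_\infty\|_\infty\|e-\alpha_g(e)\|_1+\|\alpha_g(a_\infty e)-a_\infty e\|_1$, and both terms tend to $0$ by the $L^1$ case applied to $e$ and $a_\infty e$, which lie in $L^1(\M)$. You instead pass through $L^2$ via H\"older and invoke $\sigma$-strong continuity of $h\mapsto\alpha_h(a_\infty)$ (the second lemma of Subsection~\ref{vNA-gac}), tested against the normal functional $\tau(e'\,\cdot\,e')$.

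The paper's route is more self-contained. It needs only $L^1$ continuity, trace invariance and multiplicativity of $\alpha_g$, and it yields an explicit quantitative bound. Yours relies on the extra lemma, though that lemma follows from $\sigma$-weak continuity of a group of $*$-automorphisms, since $\alpha_g(x)^*\alpha_g(x)=\alpha_g(x^*x)$.

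Your version also makes two things explicit that the paper leaves loose. First, you reduce continuity at an arbitrary $g_0$ to continuity at the identity by transporting $e$ to $\alpha_{g_0^{-1}}(e)$; the paper only treats $g\to 1$. Second, you work directly with the seminorm $p_e(x)=\|exe\|_1$ as defined. The paper phrases its argument through $\tau(e|x|e)$, and ultimately bounds the one-sided quantity $\|(\alpha_g(a)-a)e\|_1$, which dominates $p_e$.
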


\begin{proof}
Since algebraically $L^{1+\infty}(\M)=L^1(\M)+\M$, we may separately consider the cases $a\in L^1(\M)$ and $a\in \M$. For the first of these the claim is a trivial 
consequence of the observations made in the introduction. Let $a\in \M$ be given with $e$ a projection with finite trace. It is enough to show that $\tau(e|\alpha_g(a)-a|e) 
\to 0$ as $g\to 1$. Firstly note that 
$$(e|\alpha_g(a)-a|e)^2= e|\alpha_g(a)-a|\,e\,|\alpha_g(a)-a|e \leq e|\alpha_g(a)-a|^2e=|(\alpha_g(a)-a)e|^2.$$Hence $\tau(e|\alpha_g(a)-a|e)\leq \tau(|(\alpha_g(a)-a)e|)$. 
If therefore we can show that $\tau(|(\alpha_g(a)-a)e|)\to 0$ as $g\to 1$, we are done. To see that this is the case, observe that by H\"older's inequality,
\begin{eqnarray*}
\tau(|(\alpha_g(a)-a)e|) &\leq& \tau(|(\alpha_g(a)(e-\alpha(e))|)+\tau(|\alpha_g(ae)-ae|)\\
&\leq& \|\alpha_g(a)\|_\infty.\tau(|e-\alpha(e)|)+\tau(|\alpha_g(ae)-ae|)\\
&=& \|a\|_\infty.\tau(|e-\alpha(e)|)+\tau(|\alpha_g(ae)-ae|)
\end{eqnarray*}
By the first part of the proof $\tau(|e-\alpha(e)|)\to )$ and $\tau(|\alpha_g(ae)-ae|)\to 0$. Hence the claim follows.
\end{proof}

\begin{proposition}
The norm topology on $L^{1+\infty}(\M)$ is finer than the topology ($\emph{loc}$). 
\end{proposition}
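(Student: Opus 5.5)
The plan is to show that each seminorm $p_e$, with $e\in\M$ a projection of finite trace, is continuous with respect to the norm $\|\cdot\|_{1+\infty}$. Concretely, we aim for an estimate of the form
$$p_e(a)=\|eae\|_1\leq C_e\|a\|_{1+\infty}\qquad (a\in L^{1+\infty}(\M)),$$
with $C_e$ depending only on $\tau(e)$. Once this is in place, every (\emph{loc})-neighbourhood of zero contains a norm ball, and the claim follows at once.

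To get the estimate we use the algebraic decomposition $L^{1+\infty}(\M)=L^1(\M)+\M$ from \cite[Proposition 2.5]{DDdP-Kothe}, together with the norm formula
$$\|a\|_{1+\infty}=\inf\{\|a_1\|_1+\|a_\infty\|_\infty\colon a=a_1+a_\infty\}.$$
This formula is the standard $K$-functional description of the norm, equal to $\int_0^1\mu(t,a)\,dt$. Fix any decomposition $a=a_1+a_\infty$. Then
$$\|eae\|_1\leq\|ea_1e\|_1+\|ea_\infty e\|_1.$$
The first term is bounded by H\"older's inequality: $\|ea_1e\|_1\leq\|e\|_\infty\|a_1\|_1\|e\|_\infty\leq\|a_1\|_1$. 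For the second term we use the observation made just before the Definition, namely $\tau(|ea_\infty e|)\leq\|a_\infty\|_\infty\tau(e)$. Combining the two bounds gives
$$p_e(a)\leq\max\{1,\tau(e)\}\bigl(\|a_1\|_1+\|a_\infty\|_\infty\bigr).$$
Taking the infimum over all decompositions yields $p_e(a)\leq\max\{1,\tau(e)\}\|a\|_{1+\infty}$.

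As an alternative route, one can argue directly with rearrangements. The bound $\mu(t,eae)\leq\mu(t,a)\chi_{[0,\tau(e))}(t)$ gives
$$\|eae\|_1\leq\int_0^{\tau(e)}\mu(t,a)\,dt\leq\max\{1,\tau(e)\}\int_0^1\mu(t,a)\,dt.$$
The last step holds because $t\mapsto\int_0^t\mu$ is concave in $t$ and vanishes at $0$.

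The only step needing any care is justifying the norm formula for $\|\cdot\|_{1+\infty}$ as an infimum over decompositions, or equivalently as $\int_0^1\mu(t,a)\,dt$. I will simply cite the standard identification of the $K$-functional at $t=1$ with $\int_0^1\mu(s,a)\,ds$ in the noncommutative setting (e.g.\ \cite{DDdP-Kothe} or \cite[\S 7.2]{GLbook}). If a different equivalent norm is in use, the argument changes only by a constant.
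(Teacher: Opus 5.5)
Your proof is correct and follows essentially the same route as the paper: split $a=a_1+a_\infty$, bound the $L^1$ part by $\|a_1\|_1$ and the bounded part by $\|a_\infty\|_\infty\tau(e)$, then take the infimum over decompositions. The paper passes through $\tau(|fe|)$ and gets the constant $1+\tau(e)$ rather than your $\max\{1,\tau(e)\}$. Your alternative rearrangement argument, via $\mu(t,eae)\leq\mu(t,a)\chi_{[0,\tau(e))}(t)$ and concavity of $t\mapsto\int_0^t\mu(s,a)\,ds$, is also valid.
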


\begin{proof}
A slight modification of the former proof will suffice. Let $e$ be a projection with finite trace. For any $a\in L^1(\M)$ and $b\in \M$, we then have that 
$$\tau(|ae|)\leq \tau(|a|)=\|a\|_1\quad \mbox{and}\quad \tau(|be|) \leq \|b\|_\infty\tau(e).$$Any element $f\in L^{1+\infty}(\M)$ can be written in the form $f=a+b$ where as 
above $a\in L^1(\M)$ and $b\in \M$. From the previous proof, we recall that $e|f|e\leq |fe|$. On using this fact, it now follows from the centred equation that 
$$\tau(e|f|e)\leq \tau(|fe|)\leq\tau(|ae|) + \tau(|be|) \leq (\|a\|_1+\|b\|_\infty)(1+\tau(e)).$$If we now take the infimum over all decomposition of the form $f=a+b$, it 
will follow from the above that $$\tau(e|f|e)\leq \|f\|_{1+\infty}(1+\tau(e)).$$From this inequality it is clear that if $f_n\to f$ in norm in $L^{1+\infty}(\M)$, then we 
will have that $\tau(e|f-f_n|e)\to 0$ for each seminorm $\tau(e|\cdot|e)$.
\end{proof}

\begin{corollary}
Let $K$ and $K_0$ be compact subsets of $G$ with $K\subset K_0$, let $W$ be the completion of $L^{1+\infty}(\M)$ equipped with the topology ($\emph{loc}$). Then for any 
$a\in L^{1+\infty}(\M)$,  the function $\chi_K\otimes_\alpha a: K_0\to L^{1+\infty}(\M):g\to \chi_K(g)\alpha_g(a)$ belongs to $L^1(K)\widetilde{\otimes}_\pi W=L^1(K;W)$. 
\end{corollary}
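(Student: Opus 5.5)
We show that $F:=\chi_K\otimes_\alpha a$, $F(g)=\chi_K(g)\alpha_g(a)$, is a Bochner-integrable $W$-valued function on $K$. The general projective-tensor proposition then identifies it with an element of $L^1(K)\widetilde{\otimes}_\pi W$; its last clause says the topology there is given by the seminorms $\int_K \widehat{p}_e(\cdot)\,d\nu$ for projections $e$ with $\tau(e)<\infty$.

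\textbf{Step 1: continuity.} By the earlier proposition, $g\mapsto \alpha_g(a)$ is point-(\emph{loc}) continuous from $G$ to $L^{1+\infty}(\M)$. Composing with the dense embedding of $L^{1+\infty}(\M)$ into its completion $W$, the restriction $g\mapsto\alpha_g(a)$ is a continuous $W$-valued function on the compact set $K_0$, hence on $K$. So $F$, which vanishes off $K$, is (\emph{loc})-continuous on $K$.

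\textbf{Step 2: boundedness of the seminorms.} Each $\alpha_g$ preserves rearrangements, since $\tau\circ\alpha_g=\tau$ gives equimeasurability of $a$ and $\alpha_g(a)$, as in Lemma \ref{2.4}. Hence $\|\alpha_g(a)\|_{1+\infty}=\|a\|_{1+\infty}$. The inequality $\tau(e|f|e)\le \|f\|_{1+\infty}(1+\tau(e))$ from the proof that the norm topology is finer than (\emph{loc}) then yields
$$p_e(\alpha_g(a))\le \|a\|_{1+\infty}(1+\tau(e))\quad\mbox{for all } g\in K.$$
Alternatively, Step 1 already gives boundedness, since $p_e$ composed with a continuous function on a compact set is bounded. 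Either way, $\int_K p_e(F(g))\,d\nu(g)\le \nu(K)\|a\|_{1+\infty}(1+\tau(e))<\infty$, because $\nu(K)<\infty$ for compact $K$ under Haar (Radon) measure.

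\textbf{Step 3: approximation by simple tensors.} For each finite set of seminorms $p_{e_1},\dots,p_{e_n}$ and each $\epsilon>0$, continuity and compactness give a finite Borel partition $K=\bigcup_i B_i$ and points $g_i\in B_i$ with $p_{e_j}(F(g)-\alpha_{g_i}(a))<\epsilon$ on $B_i$ for every $j$. The simple tensors $\sum_i\chi_{B_i}\otimes\alpha_{g_i}(a)\in L^1(K)\otimes W$ then approximate $F$ within $\epsilon\nu(K)$ in each seminorm $\int_K \widehat p_{e_j}(\cdot)\,d\nu$.

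These simple tensors form a Cauchy net in the complete space $L^1(K)\widetilde{\otimes}_\pi W$. Its limit is identified with $F$ through the projective-limit description, working one seminorm at a time in the Banach quotients $Y_e$, where $L^1(K)\widetilde\otimes_\pi Y_e=L^1(K;Y_e)$ is classical. This gives the claim.

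\textbf{Main obstacle.} The delicate step is this last identification. One must check that the family of classes of $F$ in $L^1(K;Y_e)$ is compatible across $e$, and that the projective limit of the $L^1(K;Y_e)$ really coincides with a space of $W$-valued functions, which is what the notation $L^1(K;W)$ means. I would handle it by invoking the final clause of the projective-tensor proposition directly, and by observing that the continuity of $F$ makes each class in $L^1(K;Y_e)$ canonical.
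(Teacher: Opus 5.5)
Your argument is correct, but it takes a different route from the paper's. The paper's proof is two lines. It asserts that $\chi_K\otimes_\alpha a$ already lies in the Banach-space Bochner space $L^1(K;L^{1+\infty}(\M))\cong L^1(K)\widehat{\otimes}_\pi L^{1+\infty}(\M)$. It then pushes this element into $L^1(K)\widetilde{\otimes}_\pi W$ along $\mathrm{Id}\otimes\iota$, using continuity of $\iota$ from the proposition that the norm topology is finer than (\emph{loc}).

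You never leave $W$. You use the (\emph{loc})-continuity of the orbit map and build the element as the limit of a Cauchy net of Riemann-type simple tensors. You then identify it componentwise in the Banach quotients $Y_e$, where $q_e\circ F$ is continuous on the compact set $K$ and hence classically Bochner integrable.

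The paper's route is shorter and purely functorial. However, its first step needs strong (norm) measurability of $g\mapsto\alpha_g(a)$ in $L^{1+\infty}(\M)$. That holds for $a\in L^1(\M)$, where the orbit map is norm continuous, but it can fail for the $\M$-component. Take translations of $\mathbb{R}$ on $L^\infty(\mathbb{R})$ and $a=\chi_A$ with $A=\bigcup_{n\in\mathbb{Z}}[n,n+\tfrac12)$. Then $|\alpha_g(a)-\alpha_h(a)|=\chi_{(A+g)\triangle(A+h)}$, and this set has infinite measure whenever $g-h\notin\mathbb{Z}$. Hence $\|\alpha_g(a)-\alpha_h(a)\|_{1+\infty}=1$, and the orbit is not essentially separably valued on any set of positive Haar measure.

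Your approach uses only the continuity actually available for every $a\in L^{1+\infty}(\M)$. The price is the bookkeeping in the non-metrizable projective limit, which you flag and handle correctly.

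Three small refinements:
\begin{itemize}
\item The earlier continuity proposition is only proved at $g\to e$. Continuity at an arbitrary $g_0$ follows from $p_e\circ\alpha_{g_0}=p_{\alpha_{g_0^{-1}}(e)}$ together with $\tau(\alpha_{g_0^{-1}}(e))=\tau(e)<\infty$.
\item After disjointifying a finite open subcover in Step 3 you cannot always keep $g_i\in B_i$. You do not need this; you only need $p_{e_j}(F(g)-\alpha_{g_i}(a))<\epsilon$ on $B_i$, which holds because $B_i\subset V_{g_i}$.
\item Since $p_{e_1}\le p_{e_1\vee e_2}$, the seminorms are directed, so you may work with one seminorm at a time.
\end{itemize}
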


\begin{proof} This is a trivial consequence of the dual observations that $\chi_K\otimes_\alpha a\in L^1(K;L^{1+\infty}(\M))$, and that $L^1(K;L^{1+\infty}(\M))\equiv 
L^1(K)\otimes_\pi L^{1+\infty}(\M)$ injects into $L^1(K)\widetilde{\otimes}_\pi W$. 
\end{proof}

With all the preparation done, we are now finally ready to construct the operator $T^\#$. Let $\iota$ denote the canonical continuous embedding 
$L^{1+\infty}(\M)\hookrightarrow (L^{1+\infty}(\M),(loc))\subset W$. Since each of $L^{1+\infty}(\M)$ and $W$ is complete, we also have that 
$C(G)\widetilde{\otimes}_\epsilon F = C(G, F)$ for each of $F\in\{L^{1+\infty}(\M), W\}$. Thus for each of these spaces, point evaluation at $e\in G$ is a well-defined map 
from $C(G)\widetilde{\otimes}_\epsilon F$ to $F$. We will denote this map by $\delta_e^{(F)}$. Finally given compact subsets $K\subset K_0$ of $G$, write $j_K$ for the map 
$a\to \chi_K\otimes_\alpha a$. Given a transferable operator 
$T$, consider the following diagram: (Each arrow represents a linear map.) 

$$\begin{array}{ccccccc}
W & \overset{j_{K_0,K}}{\longrightarrow} & L^1(K_0)\widetilde{\otimes}_\pi W & \overset{T\otimes \mathrm{Id}_W}{\longrightarrow} & C(G)\widetilde{\otimes}_\epsilon W & \overset{\delta_e^{(W)}}{\longrightarrow} & W\\
& & & & & &\\
\iota\uparrow &  & (\mathrm{Id} \otimes \iota)\uparrow & & (\mathrm{Id} \otimes \iota)\uparrow &  & \iota\uparrow\\
& & & & & &\\
L^{1+\infty}(\M) & \overset{j_{K_0,K}}{\longrightarrow} & L^1(K_0)\widetilde{\otimes}_\pi L^{1+\infty} & \overset{T\otimes \mathrm{Id}_{L^{1+\infty}}}{\longrightarrow} & C(G)\widetilde{\otimes}_\epsilon L^{1+\infty} & \overset{\delta_e^{(L^{1+\infty})}}{\longrightarrow} & L^{1+\infty}(\M)
\end{array}$$
It is not difficult to verify that the above diagram commutes. For example if we consider the second ``block'', then starting with a simple tensor 
$f(t)\otimes a \equiv f(t)a$ (where $f\in L^1(K_0)$ and $a\in L^{1+\infty}(\M)$, it is not difficult to see that both of $(T\otimes \mathrm{Id}_W)(\mathrm{Id} \otimes 
\iota_{L^{1+\infty}})(f\otimes a)$ and $(\mathrm{Id} \otimes \iota)(T\otimes \mathrm{Id}_{L^{1+\infty}})(f\otimes a)$ lead to the element $T(f)\otimes a$ of $C(K_0, W)$. On 
passing to linear combinations of simple tensors and extending by continuity, it is therefore clear that $(T\otimes \mathrm{Id}_W)(\mathrm{Id} \otimes \iota_{L^{1+\infty}}) 
= (\mathrm{Id} \otimes \iota_{L^{1+\infty}})(T\otimes \mathrm{Id}_{L^{1+\infty}})$.

The commutation of the above diagram ensures that the range of the sequence of maps 
$$L^{1+\infty}(\M) \overset{j_{K_0,K}}{\longrightarrow} L^1(K_0)\widetilde{\otimes}_\pi W \overset{T\otimes \mathrm{Id}_W}{\longrightarrow}  
C(G)\widetilde{\otimes}_\epsilon W \overset{\delta_e^{(W)}}{\longrightarrow} W,$$is contained in $\iota(L^{1+\infty}(\M))$. We have therefore proved the following 

\begin{proposition}\label{exist} Let $W$ be the completion of $L^{1+\infty}(\M)$ under the topology of bilateral local integrability. Given compact subsets $K\subset K_0$ of 
$G$, the sequence of maps $T^\#_{K_0,K}=\delta_e^{(W)}\circ (T\otimes \mathrm{Id}_W)\circ j_{K_0,K}$ is a well-defined linear operator from $L^{1+\infty}(\M)$ to 
$L^{1+\infty}(\M)$. In the case that $K_0=K$, we will simply write $T^\#_{K_0}$ for $T^\#_{K_0,K_0}$.
\end{proposition}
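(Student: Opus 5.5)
The plan is to verify the three links of the composite separately, and then use the commutative diagram preceding the statement to pull the range back from $W$ into $\iota(L^{1+\infty}(\M))$.

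\textbf{The embedding $j_{K_0,K}$.} For $a\in L^{1+\infty}(\M)$, the Corollary above shows that $g\mapsto \chi_K(g)\alpha_g(a)$ defines an element of $L^1(K_0;L^{1+\infty}(\M))\equiv L^1(K_0)\otimes_\pi L^{1+\infty}(\M)$. Pushing it forward by $\mathrm{Id}\otimes\iota$ gives an element of $L^1(K_0)\widetilde{\otimes}_\pi W$. Linearity of $a\mapsto j_{K_0,K}(a)$ is clear pointwise, since each $\alpha_g$ is linear. For the Bochner measurability I will use the point-(loc) continuity of $g\mapsto\alpha_g(a)$ on $W$, and the point-norm continuity on the $L^1$ part of $a$. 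The $\M$ part of $a$ needs more care. There I will either use the norm bound $\|\chi_K\alpha_g(a)\|_{1+\infty}\le\|a\|_{1+\infty}$ together with $\mathrm{Id}\otimes\iota$, or simply work in $W$ throughout.

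\textbf{The middle arrow.} $T$ is continuous from $L^1(K_0)$ (regarded inside $L^{1+\infty}(G)$ as functions supported on $K_0$) into $C(G)$. So $T\otimes\mathrm{Id}_F$ is continuous on the algebraic tensor product from the projective to the injective topology, because $\epsilon\le\pi$. It therefore extends to the completions $L^1(K_0)\widetilde{\otimes}_\pi F\to C(G)\widetilde{\otimes}_\epsilon F$, for $F=W$ and also for $F=L^{1+\infty}(\M)$. Since $F$ is complete, $C(G)\widetilde{\otimes}_\epsilon F=C(G,F)$, and $\delta_e^{(F)}$ is a well-defined continuous linear map. Hence $T^\#_{K_0,K}$ is a well-defined linear map $L^{1+\infty}(\M)\to W$.

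\textbf{Landing in $L^{1+\infty}(\M)$.} Each square of the diagram commutes. The second square was checked on simple tensors and extended by continuity. The first square holds by construction of $j_{K_0,K}$. The third square holds because $\delta_e^{(W)}\circ(\mathrm{Id}\otimes\iota)=\iota\circ\delta_e^{(L^{1+\infty})}$ on $C(G,L^{1+\infty})$, as both sides are evaluation at $e$. Chasing the diagram gives
$$T^\#_{K_0,K}(a)=\iota\bigl(\delta_e^{(L^{1+\infty})}(T\otimes\mathrm{Id})(j_{K_0,K}a)\bigr),$$
so the range lies in $\iota(L^{1+\infty}(\M))$. Because (loc) is Hausdorff, $\iota$ is injective, and $T^\#_{K_0,K}$ may be identified with a linear operator on $L^{1+\infty}(\M)$.

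\textbf{Main obstacle.} The delicate point is the bottom row. There one needs $j_{K_0,K}(a)$ to lie in the completed projective tensor product with $L^{1+\infty}(\M)$ itself, and not merely with $W$. For $a\in\M$, the map $g\mapsto\alpha_g(a)$ is in general not norm continuous, so it need not be Bochner measurable into $L^{1+\infty}(\M)$.

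I would first try to obtain strong measurability by restricting to a $\sigma$-finite piece, using the $\sigma$-compactness of $G$. If this fails, the fallback is to define the operator by the top row alone. Then $T^\#_{K_0,K}(a)$ can be identified with the element of $W$ given by the $W$-valued Bochner integral $\int_{K_0} k\text{-type kernel}\cdot\chi_K(g)\alpha_g(a)\,dg$. In that case I would show directly that this integral lies in $L^{1+\infty}(\M)$, using the norm bound $\|\chi_K\alpha_g(a)\|_{1+\infty}\le\|a\|_{1+\infty}$ and the completeness of the unit ball in (loc) (the Theorem above).
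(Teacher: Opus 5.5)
Your first three steps are the paper's argument: the paper uses the same two-row diagram, checks the middle square on simple tensors, and concludes that the top row lands in $\iota(L^{1+\infty}(\M))$. You are right to distrust the bottom row. This is a genuine gap, and the paper's proof has it too, because it relies on the preceding corollary's unproved assertion that $\chi_K\otimes_\alpha a\in L^1(K;L^{1+\infty}(\M))$.

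Here is a counterexample to that assertion. Take $\M=L^\infty(\mathbb{R})$ with Lebesgue trace, $G=\mathbb{R}$ acting by translation, and $a=\chi_E$ with $E=\bigcup_{n\in\mathbb{Z}}[n,n+\tfrac12)$.
\begin{itemize}
\item For $0\le g<g'<\tfrac12$, the difference $\alpha_g(a)-\alpha_{g'}(a)$ is a $\pm1$-valued function on $(E+g)\triangle(E+g')$, a set of infinite measure. Hence $\|\alpha_g(a)-\alpha_{g'}(a)\|_{1+\infty}=1$.
\item So $g\mapsto\alpha_g(a)$ is not essentially separably valued on any positive-measure subset of $[0,\tfrac12)$.
\item Therefore $\chi_{[0,1/2]}\otimes_\alpha a\notin L^1(K_0)\widetilde{\otimes}_\pi L^{1+\infty}(\M)$.
\end{itemize}
Your third step therefore does not prove the landing claim as written. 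Your first remedy cannot work either: the obstruction is non-separability on a compact set, not a lack of $\sigma$-finiteness.

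Your fallback is the right route, and you can complete it without the bottom row.
\begin{itemize}
\item Continuity of $T$ makes $\phi(f)=T(f)(e)$ a bounded functional on $L^1(K_0)$, with $\delta_e^{(W)}(T\otimes\mathrm{Id}_W)(\chi_B\otimes x)=\phi(\chi_B)x$.
\item Since $g\mapsto\alpha_g(a)$ is (loc)-continuous, compactness of $K$ gives the following. For finitely many seminorms $p_{e_i}$ and $\varepsilon>0$ there is a Borel partition $K=\bigcup_j B_j$ and points $g_j$ with $p_{e_i}(\alpha_g(a)-\alpha_{g_j}(a))<\varepsilon$ for $g\in B_j$.
\item The step functions $\sum_j\chi_{B_j}\otimes\alpha_{g_j}(a)$ then converge to $j_{K_0,K}(a)$ in every seminorm $\int_{K_0}p_e(\cdot)$. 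This also shows $j_{K_0,K}(a)\in L^1(K_0)\widetilde{\otimes}_\pi W$ without the corollary.
\item Consequently $T^\#_{K_0,K}(a)$ is the (loc)-limit of the net $s=\sum_j\phi(\chi_{B_j})\alpha_{g_j}(a)\in L^{1+\infty}(\M)$.
\end{itemize}

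Next, bound this net uniformly.
\begin{itemize}
\item Choose unimodular $\epsilon_j$ with $\epsilon_j\phi(\chi_{B_j})=|\phi(\chi_{B_j})|$. Then $\sum_j|\phi(\chi_{B_j})|=\phi\bigl(\sum_j\epsilon_j\chi_{B_j}\bigr)\le\|\phi\|\,|K|$.
\item Each $\alpha_g$ is isometric on $L^{1+\infty}(\M)$, so $\|s\|_{1+\infty}\le\|\phi\|\,|K|\,\|a\|_{1+\infty}$.
\end{itemize}
Now apply the theorem that balls of $L^{\bm{\rho}}(\M,\tau)$ are (loc)-complete; it covers $L^{1+\infty}$, which has the Fatou property. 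Together with the Hausdorff property of $W$, this gives $T^\#_{K_0,K}(a)\in\iota(L^{1+\infty}(\M))$, and even $\|T^\#_{K_0,K}(a)\|_{1+\infty}\le\|\phi\|\,|K|\,\|a\|_{1+\infty}$.

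The ``kernel'' you were reaching for is just $\phi$, which exists for every transferable $T$; no convolution structure is needed.
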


\begin{lemma} Let $W$ be as before. Given compact subsets $K_0\subset K_1$ of $G$, it follows that $T^\#_{K_0} = T^\#_{K_1,K_0}$.
\end{lemma}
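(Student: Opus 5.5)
The plan is to compare the two operators directly on an arbitrary $a\in L^{1+\infty}(\M)$. Both $T^\#_{K_0}=T^\#_{K_0,K_0}$ and $T^\#_{K_1,K_0}$ are built from the same element $\chi_{K_0}\otimes_\alpha a$, namely the function $g\mapsto \chi_{K_0}(g)\alpha_g(a)$. The only difference is the ambient space in which it is regarded: $L^1(K_0)\widetilde{\otimes}_\pi W$ in the first case and $L^1(K_1)\widetilde{\otimes}_\pi W$ in the second. So it suffices to show three things. First, that the natural extension-by-zero map $r:L^1(K_0)\widetilde{\otimes}_\pi W\to L^1(K_1)\widetilde{\otimes}_\pi W$ carries $j_{K_0,K_0}(a)$ to $j_{K_1,K_0}(a)$. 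Second, that $T\otimes\mathrm{Id}_W$ is compatible with $r$. Third, that the same point evaluation $\delta_e^{(W)}$ is applied afterwards in both cases.

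For the first step, I will use the Corollary preceding Proposition \ref{exist} together with the identification $L^1(K_i)\widetilde{\otimes}_\pi W=L^1(K_i;W)$. Under this identification the map $r$ is just extension by zero of $W$-valued Bochner integrable functions from $K_0$ to $K_1$. Extension by zero is isometric, since by the earlier Proposition the seminorms are $\int q_\beta(\cdot)\,d\nu$. It therefore lifts continuously to the completed projective tensor products as $\varepsilon\otimes\mathrm{Id}_W$, where $\varepsilon:L^1(K_0)\to L^1(K_1)$ is extension by zero. Since $\chi_{K_0}(g)\alpha_g(a)$ vanishes off $K_0$, we get $r(j_{K_0,K_0}(a))=j_{K_1,K_0}(a)$ as elements of $L^1(K_1;W)$.

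For the second step, I will interpret $T$ acting on $L^1(K_i)$ via extension by zero into $L^{1+\infty}(G)$. Then the scalar operators satisfy $T|_{L^1(K_1)}\circ\varepsilon=T|_{L^1(K_0)}$ as maps into $C(G)$. Tensoring with $\mathrm{Id}_W$, I will check the identity $(T\otimes\mathrm{Id}_W)\circ(\varepsilon\otimes\mathrm{Id}_W)=T\otimes\mathrm{Id}_W$ first on simple tensors $f\otimes w$ with $f\in L^1(K_0)$, where both sides give $T(f)\otimes w$. I will then pass to finite sums and extend by continuity to the completions, exactly as in the diagram-chasing argument preceding Proposition \ref{exist}. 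This requires continuity of $T\otimes\mathrm{Id}_W$ from the projective to the injective completion, which follows from continuity of $T:L^1(K_i)\to C(G)$ and the fact that $\pi$ dominates $\epsilon$. Applying $\delta_e^{(W)}$ to both sides then yields $T^\#_{K_0}(a)=T^\#_{K_1,K_0}(a)$.

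The main obstacle is the bookkeeping in the second step, because the paper never makes precise on which $L^1(K_0)$ the operator $T$ acts. Its domain is $L^{1+\infty}(G)$, so one must confirm that $T$ restricted to $L^1(K)$, viewed inside $L^{1+\infty}(G)$ via extension by zero, is continuous into $C(G)$ with the compact-open topology. Once that is settled, everything else is a density-and-continuity argument on simple tensors. Semilocality of $T$ is not needed for this lemma.
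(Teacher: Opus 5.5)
Your proposal is correct and is essentially the paper's argument: the paper draws the same commuting diagram, with the extension-by-zero map $\iota_{0,1}\otimes\mathrm{Id}_W$ as the vertical arrow, checks commutation block by block on simple tensors, and extends by continuity. That is exactly your three steps. Your bookkeeping is slightly cleaner than the paper's in two places: you take the identity on the $C(G)\widetilde{\otimes}_\epsilon W$ column, where the paper's label $\iota_{0,1}\otimes\mathrm{Id}_W$ is a slip, and you start from $a\in L^{1+\infty}(\M)$ rather than from $W$, where $\alpha_g$ is not defined.
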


\begin{proof} Let $K_0$ and $K_1$ be as in the hypothesis, and let $\iota_{0,1}$ denote the isometric injection $L^1(K_0)\to L^1(K_1)$ defined by extending each $f\in 
L^1(K_0)$ to a function assuming the value $0$ on $K_1\backslash K_0$. Now consider the following diagram:
$$\begin{array}{ccccccc}
W & \overset{j_{K_1,K_0}}{\longrightarrow} & L^1(K_1)\widetilde{\otimes}_\pi W & \overset{T\otimes \mathrm{Id}_W}{\longrightarrow} & C(G)\widetilde{\otimes}_\epsilon W & \overset{\delta_e^{(W)}}{\longrightarrow} & W\\
& & & & & &\\
Id_W\uparrow &  & (\iota_{0,1} \otimes \mathrm{Id}_W)\uparrow & & (\iota_{0,1} \otimes \mathrm{Id}_W)\uparrow &  & \mathrm{Id}_W\uparrow\\
& & & & & &\\
W & \overset{j_{K_0,K_0}}{\longrightarrow} & L^1(K_0)\widetilde{\otimes}_\pi W & \overset{T\otimes \mathrm{Id}_W}{\longrightarrow} & C(G)\widetilde{\otimes}_\epsilon W & \overset{\delta_e^{(W)}}{\longrightarrow} & W
\end{array}$$
As before we can prove that the diagram commutes by for each ``block'' checking the commutation on simple tensors, and then extending by continuity. The commutation of this 
diagram ensures that $T^\#_{K_1,K_0}=T^\#_{K_0}$.
\end{proof}

\begin{proposition}\label{Thashlim} Let $T:L^{loc}(G)\to C(G)$ be a transferable operator, $E$ be a compact neighbourhood of $e\in G$ and $U$ the neighbourhood guaranteed by 
semilocality. For any two compact sets $K_1$ and $K_2$ containing $U^{-1}E$, we have that $T^\#_{K_1}=T^\#_{K_2}$.
\end{proposition}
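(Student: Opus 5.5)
The plan is to reduce to the case $K_1\subset K_2$ and then compare the two localised operators directly. Given arbitrary compacts $K_1,K_2\supseteq U^{-1}E$, put $K_3=K_1\cup K_2$. If we know $T^\#_{K_i}=T^\#_{K_3}$ for $i=1,2$, the claim follows. So assume $K_1\subset K_2$. By the preceding Lemma, $T^\#_{K_1}=T^\#_{K_2,K_1}$. It therefore suffices to show
$$\delta_e^{(W)}\circ(T\otimes\mathrm{Id}_W)\bigl(\chi_{K_2}\otimes_\alpha a-\chi_{K_1}\otimes_\alpha a\bigr)=0$$
for every $a\in L^{1+\infty}(\M)$, where both terms are regarded in $L^1(K_2)\widetilde{\otimes}_\pi W$. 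The difference is $\chi_{K_2\setminus K_1}\otimes_\alpha a$, and it is supported in $K_2\setminus K_1\subseteq G\setminus U^{-1}E$.

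The heart of the argument is a vector-valued version of semilocality. Semilocality says that if $\mathrm{supp}(f)\subseteq V$, then $\mathrm{supp}(Tf)\subseteq U^{-1}V$. Before using it I would first correct the orientation of the hypothesis: what is needed is that the support $V$ of the difference avoids the set of points $v$ with $e\in U^{-1}v$, namely $U$ itself. The hypothesis is meant to ensure this. Assuming $e\in E$, we have $U\subseteq U^{-1}E$ or $U^{-1}\subseteq U^{-1}E$, according to the convention. After replacing $U$ by $U\cup U^{-1}$ (which is still relatively compact) if necessary, $K_2\setminus K_1$ is disjoint from a neighbourhood of the relevant set. Since $Tf$ is continuous, it then vanishes on a neighbourhood of $e$. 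In particular, $\delta_e(Tf)=0$ for every $f\in L^1(K_2)$ with $f=0$ a.e.\ on $K_1$.

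Next I would pass from scalars to $W$-valued functions. Let $Z$ be the closed subspace of $L^1(K_2)\widetilde{\otimes}_\pi W$ consisting of elements vanishing a.e.\ on $K_1$; it is isomorphic to $L^1(K_2\setminus K_1)\widetilde{\otimes}_\pi W$. On simple tensors $f\otimes w$ with $f\in L^1(K_2\setminus K_1)$, the previous paragraph gives
$$\delta_e^{(W)}(T\otimes\mathrm{Id}_W)(f\otimes w)=(Tf)(e)\,w=0.$$
Simple tensors are dense in $L^1(K_2\setminus K_1)\widetilde{\otimes}_\pi W$. The composite $\delta_e^{(W)}\circ(T\otimes\mathrm{Id}_W)$ is continuous, since $T$ is continuous from $L^1(K_2)$ into $C(G)$ and both point evaluation and tensoring with the identity are continuous on the completed projective and injective tensor products. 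Hence the composite vanishes on all of $Z$. Finally, $\chi_{K_2\setminus K_1}\otimes_\alpha a$ lies in $Z$ by the Corollary giving membership in $L^1(K;W)$, applied with $K=K_2\setminus K_1$; this set may need replacing by its closure, or the Bochner description can be used directly. This gives the required vanishing.

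The main obstacle I expect is the continuity step. $T$ is only assumed continuous on $L^{loc}(G)$ with values in $C(G)$ carrying the compact-open topology, so its restriction to $L^1(K_2)$, and then the tensored map into $C(G)\widetilde{\otimes}_\epsilon W$, has to be justified. I would handle this by noting that $L^1(K_2)\hookrightarrow L^{loc}(G)$ is continuous and that $T\otimes\mathrm{Id}$ is defined as in the construction preceding Proposition \ref{exist}, so continuity is inherited. A secondary, bookkeeping-level nuisance is reconciling the $U$ versus $U^{-1}$ convention in the hypothesis.
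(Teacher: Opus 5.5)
Your proposal is correct and follows essentially the paper's argument. Scalar semilocality gives $\delta_e(Tf)=0$ for $f$ vanishing on the smaller compact set; this is lifted to $W$-valued elements by density of simple tensors and continuity of $\delta_e^{(W)}\circ(T\otimes\mathrm{Id}_W)$ (the paper truncates an approximating net $P_n$ by $\chi_K$, where you restrict to the complemented subspace $Z$); and the nested case is combined with the preceding Lemma (the paper via $K_1\cap K_2$, you via $K_1\cup K_2$). Your caution about orientation is justified: the paper's proof silently uses $\mathrm{supp}\,T(\chi_{K^c})\subseteq K^cU$ rather than the $U^{-1}V$ of the definition, and read literally the hypothesis $K_i\supseteq U^{-1}E$ does not force $U\subseteq K_i$, so symmetrising $U$ or switching convention is genuinely needed.
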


\begin{proof}
We start by analysing the effect of semilocality on the action of $T$, and then bootstrap to $T\otimes \mathrm{Id}_W$. Let $E$ be a compact neighbourhood of $e\in G$. For any 
compact set $K\supset U^{-1}E$, the support of $T(\chi_{K^c})$ will be contained in $K^cU$. On arguing as in \cite[Lemma 2.15]{dBL}, it follows that $K^cU\cap E$ is empty, 
and hence that $\delta_e(T(\chi_{K^c}))=0$. Now let $K_0$ be a fixed compact subset of $G$ with $K_0\supset U^{-1}E$, and let $K$ be any other compact set with 
$K_0 \supset K\supset U^{-1}E$. It follows from what we have just proved that $\delta_e(T(\chi_{K})) = \delta_e(T(\chi_{K_0}))$. Now pass to the operator 
$T\otimes \mathrm{Id}_W: L^1(K_0)\widetilde{\otimes}_\pi W \to C(G)\widetilde{\otimes}_\epsilon W$. For any simple tensor $f\otimes a$ (with $f\in L^1(K_0)$ and $a\in W$) 
direct computation therefore shows that $\delta_e^{(W)}(T\otimes \mathrm{Id}_W)(f\chi_K \otimes a)= \delta_e^{(W)}(T\otimes \mathrm{Id}_W)(f \otimes a)$. Next select a net  
$\{P_n\}$ of linear combinations of simple tensors converging to $\chi_{K_0}\otimes_\alpha a$ in $L^1(K_0)\widetilde{\otimes}_\pi W$. If say 
$P_n =\sum_{k=1}^m\gamma_k (f_k \otimes a_k)$ we may make sense of $(\chi_{K}\otimes \mathds{1})P_n$ by identifying it with $\sum_{k=1}^m\gamma_k (\chi_{K}f_k \otimes a_k)$. 
Observe that $\{(\chi_{K}\otimes \mathds{1})P_n\}$ is then still a net of simple tensors. For the sake of simplicity we will in the following simply write $P_n(g)$ for 
$\sum_{k=1}^m\gamma_k (f_k(g)a_k)$. Similary  $[(\chi_{K}\otimes \mathds{1})P_n](g)$ means $\sum_{k=1}^m\gamma_k (\chi_K(g)f_k(g)a_k)$. Now let 
$\widetilde{p}_\alpha=\int_{K_0}p_\alpha(\cdot)\,dh$ be the canonical seminorms determining the topology on $L^1(K_0)\widetilde{\otimes}_\pi W$ (where the $p_\alpha$'s are 
the seminorms determining the topology on $W$). The convergence of the net $\{P_n\}$ to $\chi_{K_0}\otimes_\alpha a\equiv\chi_{K_0}(\cdot)\alpha_{(\cdot)}(a)$, is the same as 
saying that $$\widetilde{p}_\alpha((\chi_{K_0}\otimes_\alpha a)-P_n)= \int_{K_0}p_\alpha(\alpha_g(a)-p_n(g))\,dh(g)\to 0\mbox{ for every }\alpha.$$However it is easy to check 
that 
\begin{eqnarray*}
\widetilde{p}_\alpha((\chi_{K}\otimes_\alpha a)-(\chi_{K}\otimes \mathds{1})P_n) 
&=& \int_{K_0}p_\alpha(\chi_{K}(g)\alpha_g(a)-[(\chi_{K}\otimes \mathds{1})P_n](g))\,dh(g)\\
&=& \int_{K}p_\alpha(\alpha_g(a)-p_n(g))\,dh(g) \\
&\leq& \int_{K_0}p_\alpha(\alpha_g(a)-p_n(g))\,dh(g).
\end{eqnarray*}
Hence the net $\{(\chi_{K}\otimes \mathds{1})P_n\}$ converges to 
$\chi_{K}\otimes_\alpha a$. From what we showed earlier for simple tensors, it is clear that $\delta_e^{(W)}(T\otimes \mathrm{Id}_W)(P_n)= \delta_e^{(W)}(T\otimes 
\mathrm{Id}_W)((\chi_{K}\otimes \mathds{1})P_n)$ for each $n$. Hence for any $a\in L^{1+\infty}(\M)$ we have by continuity that $T^\#_{K_0}(a)=\delta_e^{(W)}(T\otimes 
\mathrm{Id}_W)(\chi_{K_0}\otimes_\alpha a)= \delta_e^{(W)}(T\otimes \mathrm{Id}_W)(\chi_{K}\otimes_\alpha a)=T^\#_{K_0,K}$. On setting $K=K_1\cap K_0$, it now follows from 
the above and the lemma that  $T^\#_{K_0}= T^\#_{K_0,K}=T^\#_K= T^\#_{K_1,K}= T^\#_{K_1}$
\end{proof}

\begin{proposition}\label{2.15} Let $T:L^{loc}(G)\to C(G)$ be a transferable operator, $E$ be a compact neighbourhood of $e\in G$ and $U$ the neighbourhood guaranteed by 
semilocality. For any compact set $K_1$ containing $U^{-1}E$, we have that $(\delta_t\otimes \mathrm{Id})(T\otimes \mathrm{Id})(\chi_{K_1}\otimes_\alpha f) = 
(\delta_t\otimes \mathrm{Id})(T\otimes \mathrm{Id})(\chi_{U^{-1}E}\otimes_\alpha f)$ for all $t\in E$.
\end{proposition}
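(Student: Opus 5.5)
The plan is to run the argument of Proposition \ref{Thashlim} again, with point evaluation at $e$ replaced by point evaluation at an arbitrary $t\in E$. The key scalar fact is
\[
\delta_t(T(\chi_{K_1\backslash U^{-1}E}h))=0
\]
for every $t\in E$ and every $h\in L^1(K_1)$. By semilocality, a function supported in $V=K_1\backslash U^{-1}E\subseteq (U^{-1}E)^c$ is mapped by $T$ to a function supported in $U^{-1}V$. (The paper elsewhere writes this set as $VU$; I will use whichever convention matches the definition of semilocality in force.)

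So the first step is to show that this support set misses $E$. Suppose some $t\in E$ had the form $t=u^{-1}v$ with $u\in U$ and $v\in V$. Then $v=ut$, and I would like to conclude $v\in U^{-1}E$, contradicting $v\in V$. This is the usual set-theoretic bookkeeping of \cite[Lemma 2.15]{dBL}. The expression $ut$ lies in $UE$ rather than $U^{-1}E$, so the side and inversion conventions have to be matched; I would follow the convention already adopted in the proof of Proposition \ref{Thashlim}. Since $T(\chi_V h)$ is continuous and vanishes off a closed set disjoint from $E$, its value at each $t\in E$ is $0$.

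Next I lift this to $W$-valued functions. For a simple tensor $f\otimes a$ with $f\in L^1(K_1)$ and $a\in W$, we have
\[
(\delta_t\otimes \mathrm{Id})(T\otimes\mathrm{Id})(f\otimes a)=T(f)(t)\,a .
\]
Splitting $f=f\chi_{U^{-1}E}+f\chi_{K_1\backslash U^{-1}E}$ and using the first step gives
\[
(\delta_t\otimes\mathrm{Id})(T\otimes \mathrm{Id})(f\otimes a)=(\delta_t\otimes\mathrm{Id})(T\otimes\mathrm{Id})\bigl((\chi_{U^{-1}E}f)\otimes a\bigr).
\]
By linearity, this identity holds for all finite sums of simple tensors.

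For the general element $\chi_{K_1}\otimes_\alpha f$, I choose a net $\{P_n\}$ of finite sums of simple tensors converging to it in $L^1(K_1)\widetilde{\otimes}_\pi W$. As in the proof of Proposition \ref{Thashlim}, the estimate
\[
\widetilde{p}_\alpha\bigl(\chi_{U^{-1}E}\otimes_\alpha f-(\chi_{U^{-1}E}\otimes\I)P_n\bigr)\leq \widetilde{p}_\alpha\bigl(\chi_{K_1}\otimes_\alpha f-P_n\bigr)
\]
shows that the truncated net $(\chi_{U^{-1}E}\otimes \I)P_n$ converges to $\chi_{U^{-1}E}\otimes_\alpha f$. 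The map $\delta_t\otimes\mathrm{Id}$ is continuous from $C(G)\widetilde{\otimes}_\epsilon W=C(G,W)$ to $W$, and $T\otimes \mathrm{Id}_W$ is continuous. Passing to the limit in the identity for $P_n$ therefore yields the claim.

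I expect the main obstacle to be the set inclusion in the first step, namely checking that the semilocality support set is disjoint from $E$ under the paper's left/right and inversion conventions. If the stated convention does not give the inclusion directly, I would first try replacing $U$ by the symmetric neighbourhood $U\cap U^{-1}$, or enlarging $U^{-1}E$ to $U^{-1}UE$, before returning to the definition of semilocality. The approximation step is routine given the proof of Proposition \ref{Thashlim}.
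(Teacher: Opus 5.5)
Your argument is essentially the paper's: it too gets $\delta_t(T(h))=0$ for $t\in E$ whenever $h$ is supported off $K=U^{-1}E$, using semilocality plus the bookkeeping of \cite[Lemma 2.15]{dBL}; it then truncates a net of simple tensors approximating $\chi_{K_1}\otimes_\alpha f$ and passes to the limit by continuity, the only cosmetic difference being that it shows the $\chi_{K_1\setminus K}$ piece is annihilated and concludes by linearity rather than truncating to $U^{-1}E$ directly. The convention mismatch you flag is real and is inherited from the paper, whose proof silently switches to the support set $K^cU$ and leans on the abelian bookkeeping of \cite{dBL}, so deferring to that convention is no fix; the clean repair is to normalise $U$ to be symmetric by replacing it with $U\cup U^{-1}$ (semilocality survives enlarging $U$ but not shrinking it, so $U\cap U^{-1}$ will not do), after which your computation $v=ut\in UE=U^{-1}E$ closes the argument.
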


\begin{proof}Let $K=U^{-1}E$. Let $h$ be a function with support in $K^c$. Arguing as in the proof of the preceding proposition, the support of $T(h)$ will then be contained 
in $K^cU$. Also arguing as in \cite[Lemma 2.15]{dBL}, it follows that $K^cU\cap E$ is empty, and hence that $\delta_t(T(h))=0$ for all $t\in E$. As in the preceding proof 
select a net of linear combinations of simple tensors $\{P_n\}$ such that $(\chi_{K_1}\otimes \I)P_n$ converges to $\chi_{K_1}\otimes_\alpha f$. Similarly 
$(\chi_{K_1-K}\otimes \I)P_n$ converges to $\chi_{K_1-K}\otimes_\alpha f$. From what we noted in the first part of the proof we have that 
$(\delta_t\otimes \I)(T\otimes \mathrm{Id})((\chi_{K_1-K}\otimes \I)P_n)=0$. So by continuity $(\delta_t\otimes \I)(T\otimes \mathrm{Id})(\chi_{K_1-K}\otimes_\alpha f)=0$. 
The claim follows. 
\end{proof}

For any $a\in L^{1+\infty}(\M)$ the formal skew tensor $1\otimes_\alpha a \equiv \alpha_{(\cdot)}(a)$ is a pointwise limit (with respect to elements of $G$) of the net 
$\{\chi_K\otimes_\alpha a: K\subset G, K\mbox{ compact}\}$ where the net is ordered by inclusion of compact subsets of $G$. Hence we may think of the limit of the net 
$\{T^\#_{K}(a): K\subset G, K\mbox{ compact}\}$ (where $T^\#_{K}(a)=\delta_e^{(W)}(T\otimes \mathrm{Id}_W)(\chi_{K}\otimes_\alpha a)$), as representing the required value of 
the formal prescription $$a\to 1\otimes_\alpha a \to (T\otimes \mathrm{Id})(1\otimes_\alpha a) \to \delta_e((T\otimes \mathrm{Id})(1\otimes_\alpha a)).$$As we see from the 
preceding proposition, the net $\{T^\#_{K}(a): K\subset G, K\mbox{ compact}\}$ actually does converge. Hence it makes sense to make the following definition:

\begin{definition}\label{Thashdef}
Let $T:L^{loc}(G)\to C(G)$ be a transferable operator, with $E$ and $U$ as in the preceding proposition and $K$ a compact subset of $G$ containing $U^{-1}E$. The transferred 
operator $T^\#$ on $L^{1+\infty}(\M)$ is then defined to be $T^\#=T^\#_K$.
\end{definition}

\part{Computing the weak/strong type of the transfer operator}

For the sake of clarity we briefly revise the basic strategy for computing the weak/strong type identified earlier. The key cornerstones of our strategy consists of the 
following: 
\begin{itemize}
\item The first step is to show that for a large class of transferable operators $T$ is a completely positive map for which we may use real interpolation to conclude that the 
operator $T\otimes\mathrm{Id}$ on $L^\infty(G)\overline{\otimes}\M$ induces contractive maps on the class of rearrangement invariant monotone Riesz-Fischer spaces 
$L^{\bm{\rho}}(L^\infty(G)\overline{\otimes}\M)$. 
\item The next step would be to show that for a compact neighbourhood $K$ of $e\in G$, the prescription $f\mapsto \chi_K\otimes f$ maps $f\in L^A(M)$ into $L^B(G;L^A(M))$.
\item Then we need to find conditions under which $L^B(G;L^A(M))$ embeds into $L^C(M\otimes L^\infty(G))$. 
\item Once inside $L^C(M\otimes L^\infty(G))$ it then follows from Lemma \ref{2.4} that $f\otimes \chi_K \mapsto f\otimes_\alpha \chi_K$ defines an isometry on the subspace 
of $L^C(M\otimes L^\infty(G))$ spanned by simple tensors of the form $\chi_K\otimes f$.
\item The final step would then be to appeal to a noncommutative version of \cite[Theorem 4.3]{L-QM} to extract the weak/strong type of $T^\#$.
\end{itemize}
 
\section{Noncommutative mixed norm spaces}

Noncommutative mixed norm spaces for $L^p$-spaces were first introduced in \cite{JP}. We here investigate the construction of mixed norm spaces from the algebra 
$L^\infty(\Gamma,\nu) \overline{\otimes}\M$ where $\M$ is a semi-finite von Neumann algebra equipped with a fns trace $\tau$ and $(\Gamma,\Sigma, \nu)$ a Radon measure space. 
In this very specific setting we wish to define mixed norm spaces of Orlicz spaces. In pursuing this objective we depart from the approach of Junge and Parcet \cite{JP} and 
choose to rather follow Blozinski's approach as described in \cite{Blo} which relies on the notion of a double decreasing rearrangement. Given $f\in L^\infty(\Gamma,\nu) 
\overline{\otimes}\M$ we will for any $x\in \Gamma$ have that $f(x)\in \M$, in which case we can then take the decreasing rearrangement of $f(x)$ with respect to $\tau$. If 
we do this for every $x\in \Gamma$, this yields a real-valued function $f_1(s,x) = \decra(s,f(x))$ of two variables . As in \cite{Blo}, we may next for any fixed $s\geq 0$ 
take the decreasing rearrangement of the function $f_1(s,\cdot)$ with respect to $\nu$, which produces the function $f_2(s,t)$ referred to as the double decreasing 
reararngement. For the second decreasing rearrangement to make sense, we must ensure that $f_1(x,t)$ is for each $t\geq 0$ in fact a measurable function with respect to the 
variable $x$. The measurability of $x\mapsto f(\cdot,x)$ will be important later when we define mixed normed spaces. To show that this measurability indeed holds, we need to 
elaborate on the nature of the functions in question. We begin by reviewing some concepts and results from \cite[\S IV.7]{Tak}. 

\begin{definition}\cite[Definition IV.7.1]{Tak}\label{muDefn} Let $E$ be a Banach space. An $E$-valued function $f$ is said to be measurable with respect to $\nu$, or simply 
$\nu$-measurable, if for any compact set $K$ and $\epsilon>0$ there exists a compact subset $K_0$ of $K$ such that $\nu(K - K_0) <\epsilon$ and $f$ is continuous on $K_0$.    
\end{definition}

The following theorem provides an alternative description of the notion of measurability introduced above.

\begin{theorem}\cite[Theorem 4.7.2]{Tak}\label{mu-msbleThm} An $E$-valued function is $\nu$-measurable if and only if the following conditions are met.
\begin{enumerate}
    \item For each $\phi$ in $E^*$ the function $x\in \Gamma \mapsto \langle f(x),\phi\rangle\in\mathbb{C}$ is $\nu$-measurable.
    \item For any compact set $K$ there exists a compact subset $K_\infty$ of $K$ such that $\nu(K - K_\infty) =0$ and $f(K_\infty)$ is separable.
\end{enumerate}    
\end{theorem}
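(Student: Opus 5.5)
The plan is to prove the two implications separately. The direction from Lusin-type measurability (Definition \ref{muDefn}) to conditions (1) and (2) should be routine. The converse is a Pettis-type argument: reduce to a separable target space, produce countably-valued approximants, and then use the inner regularity of the Radon measure $\nu$ to pass to continuity on large compact sets.

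For the forward direction, fix a compact $K$ and $\phi\in E^*$.
\begin{itemize}
\item \textbf{Condition (1).} For each $n$ choose a compact $K_n\subset K$ with $\nu(K-K_n)<1/n$ and $f|_{K_n}$ continuous. Then $x\mapsto\langle f(x),\phi\rangle$ is continuous on every $K_n$, so it is $\nu$-measurable in the scalar sense of Definition \ref{muDefn}.
\item \textbf{Condition (2).} Each $f(K_n)$ is compact, hence separable. Put $K_\infty=\bigcup_n K_n$. Then $\nu(K-K_\infty)=0$ and $f(K_\infty)$ is a countable union of separable sets, hence separable.
\item \textbf{A caveat.} This $K_\infty$ is only $\sigma$-compact. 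I would read the ``compact'' in (2) in this sense, namely a $\sigma$-compact (or measurable) subset of full measure in $K$. A single compact set of full measure cannot be expected in general.
\end{itemize}

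For the converse, fix $K$ and $K_\infty$ as in (2), and let $E_0$ be the closed linear span of $f(K_\infty)$, a separable Banach space.
\begin{itemize}
\item \textbf{Norm distances are measurable.} By Hahn--Banach and separability of $E_0$ there is a sequence $(\phi_j)$ in the unit ball of $E^*$ with $\|v\|=\sup_j|\langle v,\phi_j\rangle|$ for all $v\in E_0$. Hence for any $v\in E_0$,
$$x\mapsto \|f(x)-v\|=\sup_j|\langle f(x),\phi_j\rangle-\langle v,\phi_j\rangle|$$
is $\nu$-measurable on $K_\infty$ by (1).
\item \textbf{Countably-valued approximants.} Take a dense sequence $(v_k)$ in $E_0$. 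For each $m$ define $f_m(x)=v_k$ on $K_\infty$, where $k$ is the least index with $\|f(x)-v_k\|<1/m$. The level sets $A_{m,k}=\{f_m=v_k\}$ are then measurable, and $\|f-f_m\|<1/m$ on $K_\infty$, so $f_m\to f$ uniformly there.
\end{itemize}

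The key step is to realise each $f_m$ as continuous on a large compact set. Given $\varepsilon>0$, I would proceed as follows.
\begin{itemize}
\item \textbf{Truncation.} Choose finitely many $k$ such that $\bigcup_{k\le N}A_{m,k}$ carries all of $\nu(K)$ up to $\varepsilon 2^{-m-1}$.
\item \textbf{Inner regularity.} By inner regularity of the Radon measure $\nu$, choose compact $C_{m,k}\subset A_{m,k}$ with measure deficit at most $\varepsilon 2^{-m-1}/N$. These compacts are pairwise disjoint, so each is relatively open in $L_m=\bigcup_{k\le N}C_{m,k}$. Hence $f_m$ is locally constant, and in particular continuous, on the compact set $L_m$, with $\nu(K-L_m)<\varepsilon 2^{-m}$.
\item \textbf{Passage to the limit.} Put $K_0=\bigcap_m L_m$, which is compact with $\nu(K-K_0)<\varepsilon$. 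On $K_0$ the function $f$ is the uniform limit of the continuous functions $f_m$, hence continuous.
\end{itemize}

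I expect the main obstacle to be the measurability of $x\mapsto\|f(x)-v\|$. Weak measurability alone does not control norms, and this is exactly where both the countable norming sequence and the separability hypothesis (2) are needed. A secondary point is the bookkeeping that keeps the exceptional sets summable, so that $K_0$ stays compact with small complement.
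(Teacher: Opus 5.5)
Your proof is correct. The paper does not prove this statement; it simply quotes Takesaki. So there is no internal argument to compare against, and what you give is the standard Pettis-type proof.

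\textbf{The caveat on condition (2).} Your reading is necessary, not cosmetic: if $K_\infty$ must be compact, the forward implication is false. Take $\Gamma=[0,1]$ with Lebesgue measure, $C$ the Cantor set and $E=\ell^2(C)$. Set $f(x)=e_x$ for $x\in C$ and $f(x)=0$ otherwise. Then $f$ is $\nu$-measurable: for any compact $K$, inner regularity gives compact subsets of $K\setminus C$ of nearly full measure, and $f\equiv 0$ on them. However, the only compact subset of $[0,1]$ of full measure is $[0,1]$ itself, since its complement would be a relatively open null set and hence empty. Its image contains the non-separable set $\{e_x\colon x\in C\}$. So (2) must be read with $K_\infty$ merely $\sigma$-compact or measurable, equivalently with a null set $N\subset K$ such that $f(K\setminus N)$ is separable. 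The word ``compact'' is presumably a transcription slip in the paper. Your converse uses only the measurability of $K_\infty$, so it proves the corrected statement.

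\textbf{A shorter converse.} Once the functions $g_k(x)=\|f(x)-v_k\|$ are known to be measurable, there is a quicker route:
\begin{itemize}
\item Apply the scalar Lusin theorem to each $g_k$ with tolerance $\varepsilon 2^{-k}$.
\item Intersect the resulting compacts with a compact subset of $K_\infty$ of nearly full measure.
\item On this intersection $f$ is continuous, because $\|f(y)-f(x)\|\le g_k(y)+g_k(x)$ and $\inf_k g_k(x)=0$ by density of $(v_k)$.
\end{itemize}
That route needs the full scalar Lusin theorem for Radon measures, to pass from ordinary measurability of the countable suprema $g_k$ back to continuity on large compacts. Your construction with countably-valued approximants, truncation and inner regularity in effect reproves that step by hand. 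It is therefore longer, but it relies only on inner regularity and the easy fact that Lusin-measurable scalar functions are measurable.
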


\begin{theorem}\cite[Theorem IV.7.5]{Tak}\label{OpMu1} If $f(\cdot)$ is a $B(H)$-valued function on $\Gamma$ such that for each $\zeta\in H$ the function 
$x\in \Gamma\mapsto f(x)\zeta\in H$ is $\nu$-measurable, then for any $H$-valued $\nu$ measurable function $\zeta(\cdot)$ the $H$-valued function $x\mapsto f(x)\zeta(x)$ is 
also $\nu$-measurable.    
\end{theorem}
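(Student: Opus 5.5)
The plan is to reduce everything to the definition of $\nu$-measurability (Definition \ref{muDefn}) via Lusin-type approximation on compact sets, using Theorem \ref{mu-msbleThm} to supply separability and weak measurability of $\zeta(\cdot)$. Fix a compact set $K\subset\Gamma$ and $\epsilon>0$. The aim is to produce a compact $K_0\subset K$ with $\nu(K-K_0)<\epsilon$ on which $x\mapsto f(x)\zeta(x)$ is continuous, or at least a uniform limit of continuous functions, which suffices.

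First, since $\zeta(\cdot)$ is $\nu$-measurable, I choose a compact $K_1\subset K$ with $\nu(K-K_1)<\epsilon/2$ on which $\zeta$ is continuous. Then $\zeta(K_1)$ is a compact, hence separable, subset of $H$. Pick a countable dense set $\{\eta_n\}$ in $\zeta(K_1)$. For each $n$, the hypothesis makes $x\mapsto f(x)\eta_n$ $\nu$-measurable, so there is a compact $L_n\subset K_1$ with $\nu(K_1-L_n)<\epsilon 2^{-n-2}$ on which $f(\cdot)\eta_n$ is continuous. Set $K_0=\bigcap_n L_n$. This set is compact, satisfies $\nu(K-K_0)<\epsilon$, and on it every $f(\cdot)\eta_n$ and $\zeta$ are continuous.

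Next I approximate $\zeta$ on $K_0$ uniformly by countably-valued functions built from the $\eta_n$. To keep continuity, I instead use the estimate
\[
\|f(x)\zeta(x)-f(y)\zeta(y)\|\le \|f(x)\|\,\|\zeta(x)-\eta_n\|+\|f(x)\eta_n-f(y)\eta_n\|+\|f(y)\|\,\|\eta_n-\zeta(y)\|.
\]
This requires local uniform boundedness of $\|f(\cdot)\|$. I obtain it by a Baire/uniform boundedness argument. The function $x\mapsto\|f(x)\|=\sup\|f(x)\xi\|$ over a countable dense subset of the unit ball of the separable subspace generated by the $\eta_n$ is measurable. Shrinking $K_0$ further by a set of small measure, I can assume $\sup_{x\in K_0}\|f(x)|_{\overline{\mathrm{span}}\{\eta_n\}}\|\le M<\infty$. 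Here only the restriction of $f(x)$ to $\overline{\mathrm{span}}\{\eta_n\}$ matters, since $\zeta(K_0)$ lies in it. I also need $\|f(x)\|<\infty$ pointwise, which holds because $f(x)\in B(H)$. Given $x\in K_0$ and $\delta>0$, I choose $\eta_n$ with $\|\zeta(x)-\eta_n\|<\delta$. By continuity of $\zeta$ on $K_0$, $\|\zeta(y)-\eta_n\|<2\delta$ for $y$ near $x$, and by continuity of $f(\cdot)\eta_n$ the middle term is $<\delta$ for $y$ near $x$. Together these give continuity of $f(\cdot)\zeta(\cdot)$ at $x$ within $K_0$.

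The main obstacle is the boundedness step, namely obtaining $M$ with only small measure removed. Measurability of $x\mapsto\|f(x)|_{H_0}\|$ as a countable supremum of measurable functions $\|f(x)\xi_k\|$ is clear. Since this function is finite everywhere, the sets $\{x\in K_0:\|f(x)|_{H_0}\|\le M\}$ increase to $K_0$, so inner regularity of the Radon measure gives a compact subset of nearly full measure for large $M$. If this route fails, the fallback is Theorem \ref{mu-msbleThm}. In that case I would verify weak measurability of $x\mapsto\langle f(x)\zeta(x),\phi\rangle$ as a pointwise limit of the measurable functions $\langle f(x)s_k(x),\phi\rangle$, where $s_k$ are countably-valued approximants of $\zeta$ with values in $\{\eta_n\}$. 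Separability of the range of $f(\cdot)\zeta(\cdot)$ on $K_0$ follows because it lies in the closed span of $\{f(x)\eta_n\}$ over $x$ in a separable-range set.
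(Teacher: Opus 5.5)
Your main argument is correct. You need to rearrange the $\epsilon$-budget to leave room for the final shrinking, since you have already spent all of $\epsilon$ before bounding $\|f(\cdot)|_{H_0}\|$. With that done, you remove from $K$ the small sets where $\zeta$, the countably many functions $f(\cdot)\eta_n$, and the scalar function $x\mapsto\|f(x)|_{H_0}\|$ misbehave. On the remaining compact set your three-term estimate gives continuity of $x\mapsto f(x)\zeta(x)$, which is exactly what Definition \ref{muDefn} requires.

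The paper gives no argument of its own here, only the citation to Takesaki. The standard proof of this fact is essentially your fallback. One approximates $\zeta$ by countably valued functions $s_k$ with values among the $\eta_n$, and observes that each $x\mapsto f(x)s_k(x)$ is $\nu$-measurable. One then passes to the pointwise limit, using only $\|f(x)\|<\infty$ for each fixed $x$ and the fact that pointwise limits of $\nu$-measurable functions are $\nu$-measurable. That fact follows from Theorem \ref{mu-msbleThm}: weak measurability passes to pointwise limits, and separability on a full-measure compact set is preserved.

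That route is shorter once Theorem \ref{mu-msbleThm} is available, because the Egorov-type closure property supplies the uniformity implicitly. Yours works directly from the definition and makes the uniformity explicit through the bound $M$, at the cost of the extra shrinking step.

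One expository correction: the bound $M$ does not come from Baire category or the uniform boundedness principle. That principle is unavailable here, because on $K_0$ you only control $\sup_{x}\|f(x)\xi\|$ for $\xi$ in the span of the $\eta_n$, which is not complete. The bound instead comes from the measurability of $x\mapsto\|f(x)|_{H_0}\|$, as a countable supremum, together with continuity of measure and inner regularity. That is exactly how you then carry it out, so the Baire label should simply be dropped.
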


For von Neumann algebras we need a notion of measurability which respects the involutive structure.

\begin{definition}\cite[Definition IV.7.7]{Tak}\label{oppmudefn} A $B(H)$-valued function $f$ on $\Gamma$ is $\nu$-measurable if both the functions $x\mapsto f(x)$ and 
$x\mapsto f(x)^*$ satisfy the assumption in Theorem $\ref{OpMu1}$.     
\end{definition}

Given a concrete von Neumann algebra $\M\subset B(H)$, it then follows that for an $\M$-valued $\nu$-measurable function $f$ the function $x\mapsto \langle 
f(x)\zeta,\xi\rangle$ is $\nu$-measurable for each $\xi\in H$. We use this observation to prove that for any fixed $t>0$, the map $x\mapsto \decra(t,f(x))$ is measurable.

\begin{proposition} Let $(\Gamma,\Sigma,\nu)$ be a Radon measure space and $f\in L^\infty(\Gamma,\nu) \overline{\otimes}\M$. Then the function given by $x\mapsto \decra(s,f(x))$ is measurable.
\end{proposition}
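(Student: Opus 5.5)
The plan is to reduce the measurability of $x\mapsto \decra(s,f(x))$ to the measurability of the distribution functions $x\mapsto d_{f(x)}(\lambda)=\tau(\chi_{(\lambda,\infty)}(|f(x)|))$ for each fixed $\lambda>0$. By definition $\decra(s,f(x))=\inf\{\lambda\geq 0\colon d_{f(x)}(\lambda)\leq s\}$. Since $d_{f(x)}$ is non-increasing and right continuous in $\lambda$, we have $\decra(s,f(x))>r$ if and only if $d_{f(x)}(r)>s$. Hence
$$\{x\colon \decra(s,f(x))>r\}=\{x\colon d_{f(x)}(r)>s\},$$
so it suffices to show that each $x\mapsto d_{f(x)}(r)$ is measurable.

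For this I would first replace $f$ by $|f|$. Indeed $f^*f\in L^\infty(\Gamma)\overline{\otimes}\M$ with $(f^*f)(x)=f(x)^*f(x)$ pointwise, by the pointwise algebra operations of \cite[Corollary IV.7.8]{Tak}. Moreover $\chi_{(r,\infty)}(|f(x)|)=\chi_{(r^2,\infty)}(f(x)^*f(x))$. By Proposition \ref{Borel-bd} applied to the self-adjoint element $a=f^*f$, the projection $p=\chi_{(r^2,\infty)}(a)\in L^\infty(\Gamma)\overline{\otimes}\M$ satisfies $p(x)=\chi_{(r^2,\infty)}(a(x))$ for almost every $x$. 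Thus $p$ is a $\nu$-measurable projection-valued function in the sense of Definition \ref{oppmudefn}.

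It remains to show that $x\mapsto\tau(p(x))$ is measurable. Here I would write $\tau$ as a sum of positive normal functionals, using normality and semifiniteness: $\tau=\sum_i\omega_{\xi_i}$ with $\omega_{\xi_i}(b)=\langle b\xi_i,\xi_i\rangle$, after passing to a standard form or an amplification if needed; alternatively take $\tau=\sup_\gamma\tau(e_\gamma\cdot e_\gamma)$ over finite-trace projections $e_\gamma$. Each map $x\mapsto\langle p(x)\xi_i,\xi_i\rangle$ is $\nu$-measurable, by the observation following Definition \ref{oppmudefn}. A countable sum of non-negative measurable functions is measurable, which gives measurability of $x\mapsto\tau(p(x))$.

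The main obstacle is countability. A general semifinite trace is a sum of possibly uncountably many vector functionals, and a supremum of uncountably many measurable functions need not be measurable. To overcome this I would use condition (2) of Theorem \ref{mu-msbleThm}. On each compact $K$, up to a null set, $p(K_\infty)$ is norm-separable, so only a separable piece of $\M$ is relevant. I expect that this allows the trace on the relevant part to be realised as a countable sum of vector functionals. Concretely, I would pick countably many $\xi_i$ such that $\tau(b)=\sum_i\langle b\xi_i,\xi_i\rangle$ for $b$ in the $\sigma$-weakly closed algebra generated by $p(K_\infty)$. Alternatively, I would restrict to a $\sigma$-finite reduced algebra $e\M e$ containing the supports. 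Measurability on each $K_\infty$ then combines, via inner regularity of the Radon measure $\nu$, to give measurability on $\Gamma$.
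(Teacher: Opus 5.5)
Your reduction is correct, and it takes a different route from the paper. Since $d_{f(x)}$ is non-increasing and right-continuous, $\{x\colon \decra(s,f(x))>r\}=\{x\colon d_{f(x)}(r)>s\}$. Everything therefore comes down to $x\mapsto\tau(p(x))$ with $p(x)=\chi_{(r^2,\infty)}(f(x)^*f(x))$, whereas the paper works with the min-max formula for $\decra(s,f(x))$. It is safer to define $p(x)$ pointwise and check measurability of $x\mapsto\langle p(x)\xi,\xi\rangle$ directly (polynomial approximation on $[0,\|f\|^2]$, then monotone limits) than to rely on the a.e.\ identity of Proposition \ref{Borel-bd}.

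The genuine gap is the final step, which you correctly flag as the obstacle but do not resolve. Condition (2) of Theorem \ref{mu-msbleThm} concerns functions measurable in the norm sense of Definition \ref{muDefn}. Your $p$ is only measurable in the strong$^*$ sense of Definition \ref{oppmudefn}, and it is in general not essentially norm-separably valued. For example, with $\Gamma=[0,1]$, $\M=L^\infty[0,1]$ and $p(x)=\chi_{[0,x]}$, one has $\|p(x)-p(y)\|=1$ whenever $x\neq y$. Likewise, nothing forces $\tau$, restricted to the von Neumann algebra generated by the $p(x)$, to be a countable sum of vector functionals. Nor need the supports of the $p(x)$ lie in a common $\sigma$-finite corner $e\M e$.

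In fact, in this generality the step cannot be closed. Let $\M=\ell^\infty([0,1])$ act on $\ell^2([0,1])$ with the counting trace, let $\Gamma=[0,1]$ carry Lebesgue measure, let $A\subset[0,1]$ be non-measurable, and put $f(x)=\chi_A(x)\chi_{\{x\}}$. For each $\zeta\in\ell^2([0,1])$ the function $x\mapsto f(x)\zeta$ vanishes off the countable support of $\zeta$. So $f$ is a bounded measurable function in the sense of Definition \ref{oppmudefn} (it even represents $0$). Yet $\decra(s,f(x))=\chi_A(x)$ for $0\leq s<1$, which is not measurable.

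The paper's proof has the same defect in disguise. Its $e_k$ and $\xi_{n,k}$ are chosen for one fixed $x$, so the countable family depends on $x$.

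What makes your route work is $\sigma$-finiteness of $\M$ (for instance, $H$ separable). Then there is a countable orthogonal family of finite-trace projections $e_n$ with $\sum_n e_n=\I$, so $\tau(b)=\sum_n\tau(e_nbe_n)$ for $b\geq 0$. Each $\tau(e_n\cdot e_n)$ is a normal positive functional, hence a countable sum of vector functionals. Consequently $x\mapsto\tau(p(x))$ is a countable sum of measurable functions. $\sigma$-finiteness also makes measurable representatives unique a.e., which is what legitimises the a.e.\ identifications you use.
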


\begin{proof} By \cite[Theorem IV.7.17]{Tak} we can consider $f$ to be a $\M$-valued function as defined in Definition \ref{oppmudefn}. Then the functions $x\mapsto f(x)\xi$ 
are $\nu$-measurable for every $\xi$ in $H$. Then from our previous observation, the maps $x \mapsto \langle f(x)\zeta,\xi\rangle$ are measurable $\mathbb{C}$-valued 
functions. 

As is pointed out in \cite{FK}, we can express the non-commutative decreasing rearrangement as
$$\decra(s,f(x)) = \inf_{e\in \mathbb{P}(\M), \tau(1-e)\leq s}\left(\sup_{\xi\in eH, \|\xi\| = 1} \langle |f(x)|\xi,\xi\rangle \right),$$
where we used the fact that $\decra(s,f(x)) = \decra(s,|f(x)|). $

Fix $s>0$. For every $k\in \mathbb{N}$, there exists a projection $e_k$ with $\tau(\I-e_k)\leq s$ such that $0\leq \sup_{\xi\in e_kH, \|\xi\| = 1} 
\langle |f(x)|\xi,\xi\rangle-\decra(t,f(x)) < \frac{1}k.$
Similarly, for every such $e_k$ we can find a $\xi_{k,n}$ with $\xi_{n,k}\in e_kH$ and $\|\xi_{n,k}\| = 1$ such that $\sup_{\xi\in e_kH, \|\xi\| = 1} 
\langle |f(x)|\xi,\xi\rangle -\langle |f(x)|\xi_{n,k},\xi_{n,k}\rangle < \frac{1}{k}  $.

Hence we can write $$\decra(t,f(x)) = \inf_k\left(\sup_{\xi_{n,k}} \langle |f(x)|\xi_{n,k},\xi_{n,k}\rangle \right).$$

The map $x\mapsto \decra(s,f(x))$, can therefore be expressed as the infimum of the supremum of countably many $\nu$-measurable functions, and hence is itself 
$\nu$-measurable.
\end{proof}

We see from the above that in the theory described in \cite[Theorem IV.7.17]{Tak}, the function $f_1:x\mapsto \decra(s,f(x))$ is an element of the tensor product 
$L^\infty(\Gamma,\nu)\overline{\otimes} L^\infty(\mathbb{R}^+,m)$ where $m$ is the Lebesgue measure. Since each non-negative function in this space will be the supremum of a 
sequence of non-negative functions in $L^{\infty}(L^\infty(\Gamma) \overline{\otimes}\M)$, we may (as described in subsection \ref{vNA_MG}) use order structures to widen the 
scope of our enquiry to functions in $\widetilde{(L^\infty(\Gamma) \overline{\otimes}\M)}$ (the ``$\tau$-measurable operators'' associated with 
$L^\infty(\Gamma) \overline{\otimes}\M$).

\bigskip

Let $E_1$ and $E_2$ be rearrangement invariant Banach function spaces on the measure spaces $(\mathbb{R}_+,m)$ and $(\Gamma,\nu)$ respectively. Let $\| f_1^x\|_{E_1}$ be the 
$\nu$-measurable function obtained from the map $x\mapsto \|f_1(\cdot,x)\|_{E_1}$.

\begin{definition} Given a classical mixed norm space $E_1(\Gamma)[E_2(\mathbb{R}_+)]$, we define the mixed norm space $E_1(\Gamma)[E_2(\M)]$ to be the completion of the 
space of operators $f$ in $\widetilde{L^\infty(\Gamma) \overline{\otimes}\M}$ such that $f_1\in E_1(\Gamma)[E_2(\mathbb{R})]$. 
\end{definition}

We show that $\|f\|_{E_1[E_2]} = \| \| f_1^x\|_{E_2} \|_{E_1}$ defines a norm on $E_1(\Gamma)[E_2(\M)]$

\begin{lemma}\label{L2.6} The formula $\|f\|_{E_1[E_2]} = \| \| f_1^x\|_{E_2} \|_{E_1}$ where $f$ is an element of $\widetilde{L^\infty(\Gamma) \overline{\otimes}\M}$ for 
which $f_1\in E_1(\Gamma)[E_2(\mathbb{R})]$, defines a norm on $E_1(\Gamma)[E_2(\M)]$.
\end{lemma}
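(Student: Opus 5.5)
We have to check the norm axioms for $\|f\|_{E_1[E_2]}=\|\,\|f_1^x\|_{E_2}\,\|_{E_1}$, where $f_1(s,x)=\decra(s,f(x))$, on the space of $f\in\widetilde{L^\infty(\Gamma)\overline{\otimes}\M}$ with $f_1\in E_1(\Gamma)[E_2(\mathbb{R}_+)]$. Once this is done, the norm extends to the completion. The strategy is to reduce everything to pointwise (in $x$) properties of the noncommutative decreasing rearrangement on $\M$ and of the rearrangement invariant norm $E_2(\M)$, and then apply the lattice properties of the classical function norm on $E_1$.

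\emph{Positive homogeneity and definiteness.} By the results of subsection \ref{vNA_MG} (Proposition \ref{Borel-unbd}), $|cf|(x)=|c||f|(x)=|c||f(x)|$ for almost every $x$. Hence $\decra(s,(cf)(x))=|c|\decra(s,f(x))$, and homogeneity follows from homogeneity of $\|\cdot\|_{E_2}$ and $\|\cdot\|_{E_1}$. For definiteness, suppose $\|f\|_{E_1[E_2]}=0$. Since $\bm{\rho}_{E_1}$ vanishes only on null functions, $\|f_1^x\|_{E_2}=0$ for $\nu$-a.e. $x$. Then $\decra(\cdot,f(x))=0$ a.e., so $f(x)=0$ for a.e. $x$, because $\tau$ is faithful. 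Finally, using the trace formula $(\int_\Gamma\otimes\tau)(\chi_{(\lambda,\infty)}(|f|))=\int_\Gamma\tau(\chi_{(\lambda,\infty)}(|f(x)|))\,d\nu(x)=0$ for every $\lambda>0$ together with faithfulness of $\int_\Gamma\otimes\tau$, we conclude $f=0$.

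\emph{Triangle inequality; the main step.} The first task is to confirm that $(f+g)(x)=f(x)+g(x)$ almost everywhere for $\tau$-measurable $f,g$ (strong sum). I would obtain this from the pointwise description of affiliated operators in subsection \ref{vNA_MG}, approximating by the bounded truncations $f\chi_{[0,n]}(|f|)$ and using that the strong sum is the closure of the algebraic sum on a common core. Given this, the noncommutative Banach function norm on $E_2(\M)$, which is a norm by the Dodds--Dodds--de Pagter result quoted in the Preliminaries, yields for a.e. $x$
$$\|(f+g)_1^x\|_{E_2}=\|f(x)+g(x)\|_{E_2(\M)}\leq \|f(x)\|_{E_2(\M)}+\|g(x)\|_{E_2(\M)}=\|f_1^x\|_{E_2}+\|g_1^x\|_{E_2}.$$
Note that the rearrangements themselves are not subadditive pointwise; only the norms are, so the argument has to pass through the norm at this point. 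The function $x\mapsto\|f_1^x\|_{E_2}$ is $\nu$-measurable. To see this, write it as an increasing limit of $\|\min(f_1^x,n)\chi_{[0,n]}\|_{E_2}$ via the Fatou property, and use the preceding Proposition on measurability of $x\mapsto\decra(s,f(x))$ together with joint measurability of $f_1$. With measurability in hand, monotonicity and subadditivity of $\bm{\rho}_{E_1}$ give $\|f+g\|_{E_1[E_2]}\leq\|f\|_{E_1[E_2]}+\|g\|_{E_1[E_2]}$. The same inequality shows that the defining set is a linear space, so the norm is well defined on it and extends by continuity to the completion $E_1(\Gamma)[E_2(\M)]$.

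The main obstacle is the almost-everywhere identity $(f+g)(x)=f(x)+g(x)$ for unbounded $\tau$-measurable operators, since the paper has so far developed the fibrewise description only for positive affiliated operators. I would first handle it by decomposing $f=f_\infty+f_0$, where $f_\infty=f\chi_{[0,\lambda]}(|f|)$ is bounded (polar decomposition handled fibrewise), and use the fact that the remaining part has finite-trace support in the tensor algebra.
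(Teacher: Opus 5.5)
Your argument is essentially the paper's: definiteness comes from the a.e.\ vanishing of $\|f_1^x\|_{E_2}$, and the triangle inequality comes from applying the norm of $E_2(\M,\tau)$ fibrewise and then using monotonicity and subadditivity of the norm on $E_1$. The extra points you treat --- homogeneity, the passage from $f(x)=0$ a.e.\ to $f=0$ via the trace formula, the a.e.\ identity $(f+g)(x)=f(x)+g(x)$ for strong sums, and measurability of $x\mapsto\|(f+g)_1^x\|_{E_2}$ --- are used only implicitly in the paper's proof, so your version is, if anything, more complete.
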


\begin{proof}
By continuity it is enough to prove the claim for the space of operators mentioned in hypothesis. If $\| \| f_1^x\|_{E_2} \|_{E_1}=0$, then $\| f_1^x\|_{E_2} = 0$ for almost 
every $x$. But then $f_1^x = 0$ for almost every $x$.

Now let $f$ and $g$ be functions such that $\| \| f_1^x\|_{E_2} \|_{E_1}<\infty$ and $\| \| g_1^x\|_{E_1} \|_{E_2}<\infty$. If we fix $x\in \Gamma$, then we know from the 
theory of non-commutative Banach function spaces, that $\|f(x) + g(x)\|_{E_2} \leq \|f(x)\|_{E_2} + \|g(x)\|_{E_2}$, where the norm is taken to be the norm of the 
non-commutative rearrangement invariant Banach function space $E_2(\M,\tau)$. Notice that for a fixed $x\in \Gamma$, we have $\|f(x)\|_{E_2} = \|f_1^x\|_{E_2}$. We may now 
combine these observations to prove the triangle inequality as shown below.
\begin{align*}
    \|f+g\|_{E_1[E_2]} & = \|\|(f+g)_1^x\|_{E_2}\|_{E_1} \\
                        & \leq \|\|f_1^x\|_{E_2}+\|g_1^x\|_{E_2}\|_{E_1} \\
                        & \leq  \|\|f_1^x\|_{E_2}\|_{E_1}+\|\|g_1^x\|_{E_2}\|_{E_1} \\
                        & = \|f\|_{E_1[E_2]} + \|g\|_{E_1[E_2]}.
\end{align*}
\end{proof}

\bigskip

We now have all the ingredients to prove a non-commutative version of \cite[Theorem 3]{Mal}.

\begin{theorem}\label{Mal-nc} Let $X$, $Y$, and $Z$ be Young functions, $(\Gamma, \Sigma, \nu)$ any $\sigma$-finite measure space and $\M$ any semi-finite von Neumann algebra equiped with 
an fns trace $\tau$. Then
\begin{enumerate}
\item The space $L^Z(L^\infty(\Gamma) \overline{\otimes}\M)$ embeds continuously into $L^X(\Gamma)[L^Y(\M)]$ if and only if $X(u)Y(v) \leq Z(\theta uv)$ for all $u,v\geq 0$.
\item The space $L^X(\Gamma)[L^Y(\M)]$ embeds continuously into $L^Z(L^\infty(\Gamma) \overline{\otimes}\M)$ if and only if $Z(\theta uv) \leq X(u)Y(v)$ for all $u,v\geq 0$.
\end{enumerate}    
\end{theorem}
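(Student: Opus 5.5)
The plan is to reduce Theorem \ref{Mal-nc} to its classical counterpart \cite[Theorem 3]{Mal}, using the first-stage rearrangement $f_1(s,x)=\decra(s,f(x))$. The reduction rests on two observations.

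First, by the definition of the mixed norm space, $\|f\|_{L^X(\Gamma)[L^Y(\M)]}=\|f_1\|_{L^X(\Gamma)[L^Y(\mathbb{R}_+)]}$. The right-hand side is a classical mixed norm of the function $f_1$ on $\Gamma\times\mathbb{R}_+$, which is measurable by the proposition preceding Lemma \ref{L2.6}.

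Second, we need $\|f\|_{L^Z(L^\infty(\Gamma)\overline{\otimes}\M)}=\|f_1\|_{L^Z(\Gamma\times\mathbb{R}_+)}$. To see this, combine the formula $(\int_\Gamma\otimes\tau)(\chi_{(\lambda,\infty)}(|f|))=\int_\Gamma\tau(\chi_{(\lambda,\infty)}(|f(x)|))\,d\nu(x)$ from \S\ref{vNA_MG} with the fact that $\tau(\chi_{(\lambda,\infty)}(|f(x)|))=m(\{s:\decra(s,f(x))>\lambda\})$. Together these show that $f$ and $f_1$ have the same distribution function. Consequently $(\int_\Gamma\otimes\tau)(Z(|f|/\lambda))=\int_\Gamma\int_0^\infty Z(f_1(s,x)/\lambda)\,ds\,d\nu(x)$. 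Here we use Proposition \ref{Borel-unbd}, which gives $Z(|f|/\lambda)(x)=Z(|f(x)|/\lambda)$, together with $\tau(Z(|a|))=\int_0^\infty Z(\decra(s,a))\,ds$. The Luxemburg norms therefore agree.

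With these identities, the "if" directions follow at once. If $X(u)Y(v)\le Z(\theta uv)$, then the classical theorem gives $\|g\|_{L^X[L^Y]}\le C\|g\|_{L^Z}$ for every measurable $g$ on $\Gamma\times\mathbb{R}_+$. Applying this to $g=f_1$ yields $\|f\|_{L^X[L^Y(\M)]}\le C\|f\|_{L^Z}$ on the dense class of $\tau$-measurable $f$. The embedding then extends to the completions by continuity. Part (2) is handled in the same way with the inequality reversed.

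For the "only if" directions, I plan to test on simple tensors $f=\chi_E\otimes p$, where $E\subset\Gamma$ has finite measure and $p\in\M$ is a projection with $\tau(p)<\infty$. For these, $f_1=\chi_E\otimes\chi_{[0,\tau(p))}$. More generally, for $f=\sum c_i\chi_{E_i}\otimes p$ one has $f_1(s,x)=\sum c_i\chi_{E_i}(x)\chi_{[0,\tau(p))}(s)$. The norm inequalities then restrict to inequalities between the corresponding classical norms of such products.

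Malkowsky's necessity argument uses only test functions of this type, namely products of indicator-type functions with prescribed measures. So the necessity of $X(u)Y(v)\le Z(\theta uv)$ follows exactly as in the classical proof, provided $\M$ contains finite-trace projections of suitable traces. When $\M$ is atomic or has restricted trace values, I expect to need to argue with fundamental functions, i.e.\ with (\ref{fundlux}): one compares $\varphi_X(\nu(E))\varphi_Y(\tau(p))$ with $\varphi_Z(\nu(E)\tau(p))$ and inverts to recover the Young function inequality. This is the step I expect to be the main obstacle, since the classical proof may need a non-atomic second factor. If the available traces are too sparse, I would fall back to assuming, or noting as implicit, that $\M$ is such that the range of $\tau$ on projections is rich enough, for instance diffuse or with unbounded trace.

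A secondary technical point is the identity $\tau(Z(|a|))=\int_0^\infty Z(\decra(s,a))\,ds$ for $\tau$-measurable $a$. This is standard, following from \cite[Chapter 5]{GLbook} via the spectral distribution. I also need the measurability of $(s,x)\mapsto f_1(s,x)$ jointly, and not just in $x$ for fixed $s$. This follows since $f_1$ is monotone and right-continuous in $s$ and measurable in $x$.
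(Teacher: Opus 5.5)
Your sufficiency halves are the paper's argument. The paper also computes $\tau_\N(Z(\alpha|f|))=\int_\Gamma\int_0^\infty Z(\alpha f_1(s,g))\,ds\,d\nu(g)$ via the pointwise functional calculus and $\mu(s,Z(|a|))=Z(\mu(s,|a|))$. It then observes that the mixed norm of $f$ is by definition the classical mixed norm of $f_1$, and quotes \cite[Theorem 3]{Mal}. Your distribution-function identity is the proposition the paper proves a little later in that section. Your remark on joint measurability of $f_1$ covers a point the paper passes over.

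The genuine difference is in necessity. The paper never works inside the given $\M$. It reads the hypothesis as ``the embedding holds for every $\sigma$-finite $\Gamma$ and every $\M$'', specialises to $\M=L^\infty(\mathbb{R}_+,m)$ so that everything is commutative, and cites the classical necessity result. Your test on $\chi_E\otimes p$ stays inside a fixed $\M$, and you do not need to know how Malkowsky's proof goes. The test gives $\varphi_X(\nu(E))\varphi_Y(\tau(p))\le C\varphi_Z(\nu(E)\tau(p))$ (or the reverse inequality). By (\ref{fundlux}) this is $Z^{-1}(st)\le CX^{-1}(s)Y^{-1}(t)$. Inverting it, with care where the Young functions vanish or become infinite, yields $X(u)Y(v)\le Z(\theta uv)$. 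This proves more than the paper does: necessity for each individual $\M$ whose finite-trace projections realise every value in $(0,\infty)$, e.g.\ diffuse with $\tau(\I)=\infty$. However, it needs $\Gamma$ to be equally rich (non-atomic with $\nu(\Gamma)=\infty$), a condition you did not state.

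Your worry about sparse trace values is not a defect of your method, and no fundamental-function argument can remove it. For a fixed $\M$ the ``only if'' is simply false. Take $\M=\mathbb{C}$ with $\tau(1)=1$, $Z=X$, $X(u)=u$ and $Y(v)=v^2$. Then $L^X(\Gamma)[L^Y(\mathbb{C})]$ is $L^X(\Gamma)$ with its norm multiplied by $\varphi_Y(1)=1$, so both embeddings hold. Yet neither $uv^2\le\theta uv$ nor $\theta uv\le uv^2$ holds for all $u,v\ge 0$. A one-point $\Gamma$ produces the same failure. So a richness hypothesis on both $\M$ and $\Gamma$ is exactly what is needed, and the paper's universal quantification over all $\Gamma$ and $\M$ is that same restriction in disguise.
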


\begin{proof} 
For the ``only if'' direction, suppose that the embeddings hold and are continuous for any $\sigma$-finite measure space $(\Gamma,\nu)$ and von Neumann algebra $\M$. Then the 
embeddings would in particular hold for the abelian von Neumann algebra $L^\infty(\mathbb{R}_+,m)$, in which case the result reduces to the commutative case as described in 
\cite[Theorem 2]{Mal}, with the stated relationships holding between the Young functions $X$, $Y$, and $Z$.

We pass to the converse implications. For convenience we let $\N = L^\infty(\Gamma) \overline{\otimes}\M$.

\medskip

(1) Suppose that $X(u)Y(v) \leq Z(\theta uv)$ for all $u,v\geq 0$. Then by \cite[Theorem 3]{Mal} the commutative space $L^Z(\Gamma\times \mathbb{R}_+)$ embeds continuously 
into $L^X(\Gamma)[L^Y(\mathbb{R}_+)]$. 

Let $f\in L^Z(\N)$, that is to say there exists an $\alpha>0$ such that $\tau_\N(Z(\alpha |f|)) <\infty$, where $\tau_\N$ is the canonical trace $\tau_\N = \int_G\cdot d\nu 
\otimes \tau$ on $\N$. Then 
\begin{align*}
   \tau_\N(Z(\alpha|f|)) & = \int_G\tau(Z(\alpha|f|(g)))dg \\
                        & = \int_G\int_0^\infty \decra\left(s,Z(\alpha|f|(g))\right)\;ds \; dg \\
                        & =  \int_G\int_0^\infty Z(\decra(s,\alpha|f|(g)))\;ds \; dg \\
                        & =  \int_G\int_0^\infty Z(\alpha\decra(s,|f|(g)))\;ds \; dg \\
                        & = \int_G\int_0^\infty Z(\alpha f_1(s,g))\;ds \; dg.
\end{align*}
So we have $f_1\in L^Z(\Gamma\times \mathbb{R}_+)$, which by the commutative result implies $f_1\in L^X(\Gamma)[L^Y(\mathbb{R}_+)]$, whence $f\in L^X(\Gamma)[L^Y(\M)]$ by 
definition.

\medskip

(2) Now suppose that $Z(\theta uv) \leq X(u)Y(v)$. Then \cite[Theorem 3]{Mal} ensures that $L^X(\Gamma)[L^Y(\mathbb{R}_+)]$ embeds continuously into $L^Z(\Gamma\times 
\mathbb{R}_+)$. 

If $f\in L^X(\M)[L^Y(G)]$, then $f_1\in L^X(G)[L^Y({R}_+)]$, in which case we have $f_1 \in L^Z(\Gamma\times \mathbb{R}_+)$ by the commutative result. Then there exists 
$\alpha>0$ such that $\int_\Gamma\int_0^\infty Z(\alpha f_1(s,g))\;ds\;dg<\infty$. But then 
\begin{align*}
    \int_\Gamma\int_0^\infty Z(\alpha f_1(s,g))\;ds\;d\nu(g) & = \int_\Gamma\int_0^\infty Z(\decra(s,\alpha|f|(g)))\;ds\;d\nu(g) \\
& = \int_\Gamma\int_0^\infty Z(\alpha \decra(s,|f|(g)))\;ds\;d\nu(g) \\                
& = \int_\Gamma\tau(Z(\alpha|f|(g))\;d\nu(g). 
\end{align*}

Hence $\int_G\tau(Z(\alpha|f|(g))\;dg<\infty$ and so $f\in L^Z(\N)$.

The continuity of the embeddings follows from the continuity of the embeddings in the commutative case.
\end{proof}

The above facts yield the following consequence. We actually believe this claim to be true for all spaces of the form $L^X(\Gamma)[L^Y(\M)]$, but are at present not yet 
able to prove this more general claim.

\begin{proposition} Let $X$, $Y$, and $Z$ be Young functions, $(\Gamma, \Sigma, \nu)$ any $\sigma$-finite measure space and $\M$ any semi-finite von Neumann algebra equiped 
with a fns trace $\tau$. If $Z(\theta uv) \leq X(u)Y(v)$ for all $u,v\geq 0$, then every element of $L^X(\Gamma)[L^Y(\M)]$ consists of an operator 
$f\in\widetilde{L^\infty(\Gamma) \overline{\otimes}\M}$ for which $f_1\in L^X(\Gamma)[L^Y(\mathbb{R})]$.
\end{proposition}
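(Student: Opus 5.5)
The space $L^X(\Gamma)[L^Y(\M)]$ was defined as a completion, so the content of the claim is that this completion adds no ideal elements. Every element of it should already be represented by a $\tau_\N$-measurable operator $f$, where $\N=L^\infty(\Gamma)\overline{\otimes}\M$, whose first rearrangement $f_1$ lies in the classical mixed norm space. The plan is to use the continuous embedding of Theorem \ref{Mal-nc}(2) to realise the completion inside the complete space $L^Z(\N)$. We then identify limits there, and use a Fatou-type lower semicontinuity to show that the limit operator has finite mixed norm.

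\textbf{Step 1: realising Cauchy sequences in $L^Z(\N)$.} Let $F$ be an element of the completion, represented by a sequence $(f_n)$ of operators in $\widetilde{\N}$ with $(f_n)_1\in L^X(\Gamma)[L^Y(\mathbb{R}_+)]$, Cauchy for $\|\cdot\|_{X[Y]}$. Passing to a subsequence, we may assume $\|f_{n+1}-f_n\|_{X[Y]}<2^{-n}$. Under the hypothesis $Z(\theta uv)\leq X(u)Y(v)$, Theorem \ref{Mal-nc}(2) gives a constant $C$ with $\|g\|_{L^Z(\N)}\leq C\|g\|_{X[Y]}$ for all such $g$. Hence $(f_n)$ is Cauchy in $L^Z(\N)$, which is a Banach space (a noncommutative Orlicz space, by the Dodds--Dodds--de Pagter theory recalled in the preliminaries). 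It therefore converges in $L^Z(\N)$, and so in measure in $\widetilde{\N}$, to some $f\in\widetilde{\N}$. This $f$ is our candidate representative of $F$.

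\textbf{Step 2: identifying the limit pointwise.} As shown at the end of \S\ref{vNA_MG} for $\tau_\N$-measurable operators, almost every $f(x)$ and $f_n(x)$ is $\tau$-measurable. We aim to show that a further subsequence satisfies $f_n(x)\to f(x)$ in measure in $\widetilde{\M}$ for almost every $x$. Since $\tau_\N(\chi_{(\lambda,\infty)}(|g|))=\int_\Gamma\tau(\chi_{(\lambda,\infty)}(|g|(x)))\,d\nu(x)$, convergence in measure of $f_n-f$ in $\N$ gives, for each $\lambda>0$, that $x\mapsto\tau(\chi_{(\lambda,\infty)}(|f_n-f|(x)))$ tends to zero in $L^1(\Gamma)$. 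By a diagonal argument over $\lambda=1/k$ this yields an a.e.\ convergent subsequence. Here one needs $|f_n-f|(x)=|f_n(x)-f(x)|$, which follows from the pointwise functional calculus (Propositions \ref{Borel-bd} and \ref{Borel-unbd}).

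\textbf{Step 3: the Fatou argument, which I expect to be the main obstacle.} Fix $x$ in the good set. Convergence in measure gives $\decra(s,f_n(x)-f_m(x))\to\decra(s,f(x)-f_m(x))$ at continuity points, and more importantly $\decra(s,f(x))\leq\liminf_n\decra(s,f_n(x))$. The Fatou property of $L^Y(\M)$ (from \cite{DDST}, as used for completeness of unit balls earlier) then gives
\[
\|f(x)-f_m(x)\|_Y\leq\liminf_n\|f_n(x)-f_m(x)\|_Y .
\]
Applying the Fatou lemma for the Banach function space $L^X(\Gamma)$ to the functions $x\mapsto\|f_n(x)-f_m(x)\|_Y=\|(f_n-f_m)_1^x\|_Y$, we obtain
\[
\|\,\|(f-f_m)_1^x\|_Y\,\|_X\leq\liminf_n\|f_n-f_m\|_{X[Y]}\leq 2^{1-m}.
\]
Taking $m=1$ and using the triangle inequality from Lemma \ref{L2.6} at the level of the $x$-fibres gives $f_1\in L^X(\Gamma)[L^Y(\mathbb{R}_+)]$. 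The same estimate shows $f_m\to f$ in $\|\cdot\|_{X[Y]}$, so $f$ represents $F$. The delicate points are the measurability of $x\mapsto\|f(x)-f_m(x)\|_Y$, which I would obtain as an a.e.\ limit of measurable functions using the measurability of $x\mapsto\decra(s,\cdot)$ established above, and the justification of the pointwise Fatou inequality for $\decra$ under convergence in measure. The latter I would take from the standard lower semicontinuity of $\decra$ in the measure topology.
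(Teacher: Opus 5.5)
Your argument is correct, and its engine is the same as the paper's. The paper's entire proof is the one-line observation that, under $Z(\theta uv)\leq X(u)Y(v)$, Theorem \ref{Mal-nc}(2) gives $L^X(\Gamma)[L^Y(\M)]\subset L^Z(L^\infty(\Gamma)\overline{\otimes}\M)\subset\widetilde{L^\infty(\Gamma)\overline{\otimes}\M}$, which is exactly your Step 1.

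Where you differ is that the paper takes this inclusion at face value on the whole completion. Theorem \ref{Mal-nc}(2) was only verified for operators $f$ with $f_1\in L^X(\Gamma)[L^Y(\mathbb{R}_+)]$. Continuity on that dense subspace only yields a map from the completion into $L^Z$. It says neither that this extended map is injective, nor that the limit operator again has its rearrangement in the classical mixed norm space.

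Your Steps 2 and 3 supply exactly these two missing points. The Fatou estimate $\|f-f_m\|_{X[Y]}\leq 2^{1-m}$ shows at once that $f_1\in L^X(\Gamma)[L^Y(\mathbb{R}_+)]$ and that $f$ is the $X[Y]$-limit of $(f_m)$. Hence distinct elements of the completion cannot be sent to the same operator. Your route buys genuine rigour at the cost of length; the paper's buys brevity by leaving this identification implicit.

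One small repair concerns measurability. Convergence in measure of $f_n(x)$ to $f(x)$ does not by itself make $\|f(x)-f_m(x)\|_Y$ the a.e. limit of $\|f_n(x)-f_m(x)\|_Y$. Your normalisation $\|f_{n+1}-f_n\|_{X[Y]}<2^{-n}$ does, however, and it survives the further subsequence of Step 2:
\begin{itemize}
\item The increasing partial sums of $\sum_n\|f_{n+1}(x)-f_n(x)\|_Y$ have $L^X(\Gamma)$-norm at most $1$, so by the Fatou property of $L^X(\Gamma)$ the sum is finite for a.e. $x$.
\item Hence $(f_n(x))$ is Cauchy, and so convergent, in $L^Y(\M)$ for a.e. $x$.
\item By Step 2 and uniqueness of limits in measure, that limit is $f(x)$.
\end{itemize}
So $\|f_n(x)-f_m(x)\|_Y\to\|f(x)-f_m(x)\|_Y$ a.e., which gives the measurability you need. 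It also makes your fibrewise Fatou step in $L^Y(\M)$ superfluous; only the Fatou lemma in $L^X(\Gamma)$ is then required.
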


\begin{proof} This follows from the fact that then $L^X(\Gamma)[L^Y(\M)]\subset L^Z(L^\infty(\Gamma) \overline{\otimes}\M)\subset \widetilde{(L^\infty(\Gamma) 
\overline{\otimes}\M)}$.
\end{proof}

If $\M$ acts on the Hilbert space $H$, then by \cite[Proposition IV.7.14]{Tak} the space $H\overline\otimes L^2(\Gamma)$ can be identified with the Hilbert space 
$L^2(\Gamma;H)$ of $H$ valued functions such that $\|\zeta\|_2 = \left(\int_\Gamma\|\zeta(x)\|_H^2dx \right)^{\frac{1}{2}} <\infty$ with inner product
$$\langle \zeta, \eta\rangle = \int_\Gamma\langle \zeta(x),\eta(x)\rangle dx,$$where $\zeta,\eta:\Gamma \mapsto H$. So the action of say 
$f\in L^{1+\infty}(L^\infty(\Gamma) \overline{\otimes}\M)$ on $L^2_H(\Gamma)$ is given by $(f\zeta)(x) = f(x)\zeta(x)$ for every $\zeta$ in the domain of $f$.

\begin{lemma}
If $f\in \widetilde{(L^\infty(\Gamma) \overline{\otimes}\M)}$ is positive, then $f(x)\geq 0$ for almost every $x$.
\end{lemma}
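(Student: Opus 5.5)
We will reduce the claim to the description of positive affiliated operators already obtained in subsection \ref{vNA_MG}. Let $f\in \widetilde{(L^\infty(\Gamma) \overline{\otimes}\M)}$ be positive. Then $f$ is a densely defined positive self-adjoint operator affiliated to $\N=L^\infty(\Gamma)\overline{\otimes}\M$. By the proposition stating that any positive self-adjoint operator affiliated to $\N$ may be written as a function $x\mapsto f(x)$ on $\Gamma$ taking values in the positive self-adjoint operators affiliated to $\M$, the pointwise values are obtained as increasing limits $f(x)=\sup_n f_n(x)$ with $f_n=f\chi_{[0,n]}(f)\in\N_+$. So the task is to see that each $f_n(x)\geq 0$ for almost every $x$, and that positivity survives the monotone limit.

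For the bounded pieces $f_n\in\N_+$, we plan to argue as follows. Write $f_n=b_n^*b_n$ with $b_n=f_n^{1/2}\in\N$ via the continuous functional calculus. By the pointwise description of the $*$-algebra operations in $\N$ from \cite[Corollary IV.7.8]{Tak}, $f_n(x)=b_n(x)^*b_n(x)\geq 0$ for every $x$, or at least almost every $x$ given that elements of $\N$ are equivalence classes. Alternatively, we can use Proposition \ref{Borel-bd} with $g=\chi_{(-\infty,0)}$: since $f_n\geq 0$ we have $\chi_{(-\infty,0)}(f_n)=0$, and hence $\chi_{(-\infty,0)}(f_n(x))=0$ for almost every $x$, that is, $f_n(x)\geq 0$ a.e. Taking the countable union of the exceptional null sets over $n\in\mathbb{N}$ yields a single null set $N$ off which every $f_n(x)$ is positive.

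For $x\notin N$ the sequence $(f_n(x))$ is then an increasing sequence of positive operators in $\M$, and its strong-resolvent/monotone limit $f(x)$ is positive. Here we also use that, by the remark following the $\tau$-measurability discussion, almost every $f(x)$ is densely defined, indeed $\tau$-measurable. To confirm this, we check that $\chi_{(-\infty,0)}(f)(x)=\chi_{(-\infty,0)}(f(x))=0$ for almost every $x$, using the identity $\chi_B(f)(x)=\chi_B(f(x))$ from that proposition, extended to $B\subset\mathbb{R}$. This identity is the cleanest route, so we will present it as the main argument and the monotone-limit picture as support.

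The main obstacle is a subtlety of bookkeeping rather than substance. The pointwise representation is only determined almost everywhere. In addition, the quadratic-form identity $\langle fx,x\rangle=\int_\Gamma\langle f(s)x(s),x(s)\rangle\,ds$ only gives positivity after integration. To pass from integrated positivity to pointwise positivity we must rely on the spectral identity, or equivalently test against vectors $x=\chi_E\otimes\xi$ with $E$ of finite measure and $\xi$ ranging over a countable dense subset of $H$. If $H$ is not separable, we will instead restrict to the separable subspaces guaranteed by condition (2) of Theorem \ref{mu-msbleThm}. We will take care to choose the exceptional null set independently of $\xi$ in this way.
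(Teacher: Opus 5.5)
Your proof is correct, but it takes a genuinely different route from the paper. The paper argues only with the quadratic form $\langle f\zeta,\zeta\rangle=\int_\Gamma\langle f(x)\zeta(x),\zeta(x)\rangle\,dx$. It localises test vectors to $\zeta\chi_K$ to rule out $\langle f(x)\zeta(x),\zeta(x)\rangle<0$ on a set of positive measure, then specialises to constant vectors $\zeta\in H$, and handles the unbounded case by ``considering dense subspaces''.

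You never test against vectors. Positivity of the bounded truncations $f_n=f\chi_{[0,n]}(f)$ comes algebraically, from $f_n(x)=b_n(x)^*b_n(x)$ via the pointwise $*$-operations (or from Proposition \ref{Borel-bd} with $\chi_{(-\infty,0)}$). The unbounded case then comes from the monotone representation $f(x)=\sup_n f_n(x)$ of subsection \ref{vNA_MG}.

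What this buys you is that the exceptional null set is automatically independent of any test vector, and no separability of $H$ is needed. That is exactly the point the paper's proof passes over when it moves from ``for each $\zeta$, almost every $x$'' to ``almost every $x$, every $\zeta$''.

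What it costs is independence from subsection \ref{vNA_MG}. The proposition you invoke there already asserts that the values $f(x)$ are positive self-adjoint operators affiliated to $\M$, so for unbounded $f$ your argument is essentially a citation of it. The paper's form-based proof deliberately avoids that functional calculus. The lemma, through the corollary that follows it, is what Proposition \ref{ptwise-spectral} uses to \emph{reconfirm} the pointwise spectral identity by different means. With your proof, that reconfirmation would no longer be independent of the facts it is meant to check.

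A few small repairs are needed:
\begin{itemize}
\item The identity $\chi_B(f)(x)=\chi_B(f(x))$ in subsection \ref{vNA_MG} is stated only for Borel $B\subset[0,\infty)$. Using $B=(-\infty,0)$ also presupposes that $f(x)$ is self-adjoint. Rely instead on the positivity built into the monotone construction.
\item To call $(f_n(x))$ increasing off $N$, you must also discard the null sets where $f_{n+1}(x)\geq f_n(x)$ fails. Applying your bounded step to $f_{n+1}-f_n\in\N_+$ handles this.
\item Your non-separable fallback via condition (2) of Theorem \ref{mu-msbleThm} would not work as stated. Elements of $\N$ are only strongly$^*$ measurable, and applying (2) to $x\mapsto f(x)\xi$ gives separability for one $\xi$ at a time. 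Your main route does not need this fallback.
\end{itemize}
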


\begin{proof}
We will show this for a bounded operator. By considering dense subspaces of $L^2_H(\Gamma)$, the proof remains largely unchanged for densely defined closed operators in 
$\widetilde{(L^\infty(\Gamma) \overline{\otimes}\M)}$.

If $f\geq 0$, then $\langle f\zeta,\zeta\rangle \geq 0$ for all $\zeta\in L^2_H(\Gamma)$. That is to say that $\int_\Gamma\langle f(x)\zeta(x),\zeta(x)\rangle dx \geq 0$. 
Suppose there exists a compact neighbourhood $K\subset \Gamma$ such that $\langle f(x)\zeta(x),\zeta(x)\rangle <0$ for all $\zeta\in L^2_H(K)$. But then 
$\zeta \chi_K \in L^2_H(\Gamma)$ with  $0>\int_K\langle f(x)\zeta(x),\zeta(x)\rangle dx =  \int_\Gamma\langle f(x)\zeta(x)\chi_K(x),\zeta(x)\chi_K(x)\rangle dx$, a 
contradiction. Hence $\langle f(x)\zeta(x),\zeta(x)\rangle >0$. Since this is true for any $\zeta \in L^2_H(\Gamma)$, in particular we have that 
$\langle f(x)\zeta,\zeta\rangle>0$ for every $\zeta\in H$. Hence $f(x)>0$ almost everywhere.
    
\end{proof}

\begin{corollary} \label{spectral order}
    If $f\leq g$ in $\widetilde{(L^\infty(\Gamma) \overline{\otimes}\M)}$, then $f(x)\leq g(x)$ almost everywhere.
\end{corollary}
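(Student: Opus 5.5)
The plan is to reduce the claim to the preceding Lemma by passing to the difference $g-f$. Since $\widetilde{(L^\infty(\Gamma)\overline{\otimes}\M)}$ is a $*$-algebra under strong sum and product, $f\leq g$ means that $h=g-f$ (strong difference) is a positive element of this algebra. The preceding Lemma then gives $h(x)\geq 0$ for almost every $x\in\Gamma$.

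The remaining task is to identify $h(x)$ with $g(x)-f(x)$ for almost every $x$. First, I would note that the pointwise representation respects the algebraic operations almost everywhere. For bounded elements this is \cite[Corollary IV.7.8]{Tak}, as recorded in subsection \ref{vNA_MG}. For $\tau$-measurable elements I would argue through the Hilbert space picture on $L^2(\Gamma;H)$, where $(f\zeta)(x)=f(x)\zeta(x)$. For $\zeta$ in a dense core contained in $\mathrm{dom}(f)\cap\mathrm{dom}(g)$, we have
$$(h\zeta)(x)=(g\zeta)(x)-(f\zeta)(x)=g(x)\zeta(x)-f(x)\zeta(x)$$
for almost every $x$. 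Choosing a countable family of such $\zeta$ whose values are dense in the relevant domains for almost every $x$ then shows that $h(x)$ and the strong difference $g(x)-f(x)$ agree on a core. They are therefore equal as closed operators affiliated to $\M$.

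Here I would use the fact, noted earlier, that almost every $f(x)$ and $g(x)$ is $\tau$-measurable. This makes $g(x)-f(x)$ a well-defined element of $\tM$ for almost every $x$.

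Combining the two steps gives $g(x)-f(x)=h(x)\geq 0$, that is, $f(x)\leq g(x)$ almost everywhere. Only countably many null sets are discarded along the way, so the exceptional set is still null. The main obstacle is the almost-everywhere compatibility of strong sums with the pointwise representation in the unbounded case. If the core argument becomes delicate, the fallback is to truncate with spectral projections $\chi_{[0,n]}$, apply the bounded case, and pass to monotone limits as in the construction preceding Proposition \ref{Borel-unbd}.
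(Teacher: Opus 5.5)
Your proposal is correct and follows the paper's route: the paper states this corollary without proof, as an immediate consequence of the preceding Lemma applied to the positive element $g-f$. The paper takes the almost-everywhere identity $(g-f)(x)=g(x)-f(x)$ for granted from the pointwise description of the algebra operations, so your core/truncation argument for that step adds justification rather than departing from the paper's approach.
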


The proposition below reconfirms a claim we noted earlier in the synopsis in subsection \ref{vNA_MG}.

\begin{proposition}\label{ptwise-spectral}
For evere positive element $f$ of $\widetilde{(L^\infty(\Gamma) \overline{\otimes}\M)}$ spectral projections $\chi_{[0,t]}(f)$ are pointwise the spectral projections 0f the 
associated $f(x)$s. That is the spectral projections of $f$ satisfy the property that $\chi_{[0,t]}(f(x)) = \chi_{[0,t]}(f)(x)$ for almost all $x$.
\end{proposition}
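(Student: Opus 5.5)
The plan is to reduce the claim to the bounded case, handled through the continuous functional calculus, and then pass to the unbounded positive case by truncation. The tool throughout is Corollary \ref{spectral order} together with the pointwise description of the algebra operations.

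\emph{Step 1: bounded $f$.} First take $f\geq 0$ in $L^\infty(\Gamma)\overline{\otimes}\M$. As observed in subsection \ref{vNA_MG}, polynomials act pointwise, so $q(f)(x)=q(f(x))$ for every $x$. Since $\|f(x)\|\leq\|f\|$, the spectra of all the $f(x)$ lie in $[0,\|f\|]$, and Weierstrass approximation on this interval gives $h(f)(x)=h(f(x))$ for every continuous $h$.

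Now fix $t\geq 0$ and choose continuous functions $0\leq h_n\leq 1$ decreasing pointwise to $\chi_{[0,t]}$ on $[0,\|f\|]$, for instance $h_n=1-f^{(t)}_n$ with $f^{(t)}_n$ as in subsection \ref{vNA_MG}. Then $h_n(f)\searrow\chi_{[0,t]}(f)$ strongly in $\N$, and for each $x$ we have $h_n(f(x))\searrow\chi_{[0,t]}(f(x))$ strongly in $\M$. The task is to identify the pointwise value of the strong limit of $h_n(f)$ with the limit of the pointwise values $h_n(f)(x)$. For this I would test against vectors $\zeta\in L^2(\Gamma;H)$ and use monotone (dominated) convergence in
\[\langle h_n(f)\zeta,\zeta\rangle=\int_\Gamma\langle h_n(f(x))\zeta(x),\zeta(x)\rangle\,dx.\]
This gives $\langle \chi_{[0,t]}(f)\zeta,\zeta\rangle=\int_\Gamma\langle \chi_{[0,t]}(f(x))\zeta(x),\zeta(x)\rangle\,dx$ for all $\zeta$. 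Taking $\zeta=\chi_E\xi_k$, with $E$ ranging over sets of finite measure and $(\xi_k)$ a countable dense subset of $H$ when $H$ is separable (otherwise working on the separable essential ranges supplied by Theorem \ref{mu-msbleThm}), and using polarisation, we conclude that $\chi_{[0,t]}(f)(x)=\chi_{[0,t]}(f(x))$ for almost every $x$.

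\emph{Step 2: unbounded $f$.} Next let $f\geq 0$ be $\tau$-measurable and put $f_n=f\chi_{[0,n]}(f)\in\N$. For $n>t$ we have $\chi_{[0,t]}(f)=\chi_{[0,t]}(f_n)$. The pointwise description of $f$ as the increasing limit of the $f_n(x)$, recalled in subsection \ref{vNA_MG}, gives $f_n(x)=f(x)\chi_{[0,n]}(f(x))$ almost everywhere, and hence $\chi_{[0,t]}(f_n(x))=\chi_{[0,t]}(f(x))$ for $n>t$. Applying Step 1 to a single $f_n$ with $n>t$ then finishes the argument. Since only countably many $t$ are needed for any subsequent use, the null sets can be united, although the statement only asks for fixed $t$.

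\emph{Main obstacle.} I expect the hardest point to be the passage from equality of the quadratic forms to equality almost everywhere, because the exceptional null set must not depend on the vector. This is where separability is needed, either of $H$ or of the values via Theorem \ref{mu-msbleThm}. Compatibility with Corollary \ref{spectral order} provides an alternative route: $\chi_{[0,t]}(f)\,f\leq t\,\chi_{[0,t]}(f)$, and together with the complementary inequality for $\chi_{(t,\infty)}$ this forces the pointwise projections to be spectral projections of $f(x)$.
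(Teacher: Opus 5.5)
Your Step 1 is fine, at least for separable $H$. Step 2, however, rests on a false identity. Write $f_n=f\chi_{[0,n]}(f)=g_n(f)$ with $g_n(s)=s\chi_{[0,n]}(s)$. Then $g_n$ sends $(n,\infty)$ to $0\in[0,t]$, so $\chi_{[0,t]}\circ g_n=\chi_{[0,t]}+\chi_{(n,\infty)}$ and
\[\chi_{[0,t]}(f_n)=\chi_{[0,t]}(f)+\chi_{(n,\infty)}(f).\]
This differs from $\chi_{[0,t]}(f)$ for every $n$ as soon as $f$ is unbounded. In the same way, $\chi_{[0,t]}(f_n(x))=\chi_{[0,t]}(f(x))+\chi_{(n,\infty)}(f(x))$. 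Applying Step 1 to a single $f_n$ therefore only yields $\chi_{[0,t]}(f)(x)+\chi_{(n,\infty)}(f)(x)=\chi_{[0,t]}(f(x))+\chi_{(n,\infty)}(f(x))$ a.e. Removing the extra terms is the very statement you are proving, for the set $(n,\infty)$. The truncation identity is valid only for intervals bounded away from $0$, which is how subsection \ref{vNA_MG} uses it when $\gamma_1>0$.

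The repair is to work with complements. Since $\chi_{(t,\infty)}\circ g_n=\chi_{(t,n]}$, Step 1 gives $\chi_{(t,n]}(f)(x)=\chi_{(t,n]}(f(x))$ a.e. Now let $n\to\infty$. You have $\chi_{(t,n]}(f)\nearrow\chi_{(t,\infty)}(f)$ in $\N$. You also have $\chi_{(t,n]}(f(x))\nearrow\chi_{(t,\infty)}(f(x))$ wherever $f(x)$ is a densely defined positive self-adjoint operator, which holds a.e. Transfer this monotone limit to almost every $x$ by exactly the vector-testing argument of your Step 1, and then take complements.

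With that fix your argument is correct, but it is not the paper's route; yours is essentially the functional-calculus sketch of subsection \ref{vNA_MG}. The paper uses no truncation and no continuous calculus. It checks that, for a.e. $x$, $t\mapsto E_t(x)=\chi_{[0,t]}(f)(x)$ behaves as a resolution of the identity for $f(x)$:
\begin{itemize}
\item it is $0$ for $t<0$ and increases to $\I$;
\item it is monotone in $t$, via Corollary \ref{spectral order};
\item it is right-continuous;
\item it satisfies $f(x)E_t(x)\le tE_t(x)$ and $f(x)(\I-E_t(x))\ge t(\I-E_t(x))$.
\end{itemize}
It then invokes uniqueness of the spectral resolution. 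That proof is shorter, but its right-continuity step silently passes a monotone strong limit in $\N$ to a.e. pointwise limits, which is precisely what your Step 1 justifies.

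Your closing ``alternative route'' is incomplete as stated. At a single $t$ the two inequalities only give $\chi_{[0,t)}(f(x))\le E_t(x)\le\chi_{[0,t]}(f(x))$; for example, when $f(x)=t\I$ every projection satisfies them. Monotonicity and right-continuity in $t$ are what pin $E_t(x)$ down, which is why the paper verifies them.

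Finally, Theorem \ref{mu-msbleThm} concerns norm measurability of Banach-space-valued maps. Elements of $\N$ are only strongly measurable in the sense of Definition \ref{oppmudefn} and need not be essentially norm-separably valued. So that theorem does not supply the countable determining family you want when $H$ is non-separable. The paper's argument does not settle that case either.
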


\begin{proof}
In order to prove the proposition, we must show that $\chi_{[0,t]}(f)(x)$ satisfies all the properties of the spectral projections of $f(x)$ and exploit their uniqueness to 
reach our conclusion. 

\begin{enumerate}
\item $\chi_{[0,t]}(f) \nearrow \I$ as $t\nearrow \infty$, and hence $\chi_{[0,t]}(f)(x) \nearrow \I$ almost everywhere. Similarly $\chi_{[0,t]}(f) =0$ for all $t<0$, hence 
$\chi_{[0,t]}(f)(x) =0$ almost everywhere for all $t<0$.
\item If $t_1 \leq t_2$, then $\chi_{[0,t_1]}(f) \leq \chi_{[0,t_2]}(f)$, and hence $\chi_{[0,t_1]}(f)(x) \leq \chi_{[0,t_2]}(f)(x)$ almost everywhere by Corollary 
\ref{spectral order}.
\item $\chi_{[0,t+\epsilon]}(f) \searrow \chi_{[0,t]}(f)$ as $\epsilon\searrow 0$ and hence $\chi_{[0,t+\epsilon]}(f)(x) \searrow \chi_{[0,t]}(f)(x)$ almost everywhere.
\item Since $f\chi_{[0,t]}(f) \leq t\chi_{[0,t]}(f)$ and $f\chi_{(t,\infty)}(f)\geq t\chi_{(t,\infty)}(f)$, we have that $f(x)\chi_{[0,t]}(f)(x) \leq t\chi_{[0,t]}(f)(x)$ and 
$f(x)\chi_{(t,\infty)}(f)(x)\geq t\chi_{(t,\infty)}(f)(x)$ almost everywhere.
\end{enumerate}

Hence for almost all $x$, $\chi_{[0,t]}(f)(x)$ are the spectral projections of $f(x)$. 
    
\end{proof}

We can exploit this fact in the following proposition. 

\begin{proposition} For any $f\in \widetilde{(L^\infty(\Gamma) \overline{\otimes}\M)}$ the distribution functions of $f$ and $\decra(\cdot,f(\cdot)):(s,x) \mapsto 
\decra(s,f(x))$ satisfy $d_f(\theta) = d_{\decra(\cdot,f(\cdot))}(\theta)$.

\end{proposition}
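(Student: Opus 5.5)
We reduce everything to a fibrewise computation and then integrate over $\Gamma$ by Fubini. Since $d_f=d_{|f|}$, $\decra(s,f(x))=\decra(s,|f|(x))$, and $|f|(x)=|f(x)|$ for almost every $x$ (the functional calculus statement of Proposition~\ref{Borel-unbd}), we may assume $f\geq 0$. The distribution function of $f$ with respect to the trace $\tau_\N=\int_\Gamma\cdot\,d\nu\otimes\tau$ is
$$d_f(\theta)=\tau_\N(\chi_{(\theta,\infty)}(f))=\int_\Gamma\tau\big(\chi_{(\theta,\infty)}(f)(x)\big)\,d\nu(x).$$
This uses the trace formula of subsection~\ref{vNA_MG}, which extends from $\N$ to projections by normality. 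The function $(s,x)\mapsto\decra(s,f(x))$ on $\mathbb{R}_+\times\Gamma$ has distribution function
$$d_{\decra(\cdot,f(\cdot))}(\theta)=(m\times\nu)\{(s,x)\colon \decra(s,f(x))>\theta\}.$$

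First, by Proposition~\ref{ptwise-spectral} (applied to $\chi_{[0,\theta]}$ and then taking complements, $\chi_{(\theta,\infty)}=\I-\chi_{[0,\theta]}$), we get $\chi_{(\theta,\infty)}(f)(x)=\chi_{(\theta,\infty)}(f(x))$ for almost every $x$. Hence
$$d_f(\theta)=\int_\Gamma \tau(\chi_{(\theta,\infty)}(f(x)))\,d\nu(x)=\int_\Gamma d_{f(x)}(\theta)\,d\nu(x).$$
Second, for each fixed $x$ we use the standard one-variable fact that $f(x)$ and $\decra(\cdot,f(x))$ are equimeasurable. For $f(x)$ $\tau$-measurable, which holds almost everywhere as noted at the end of subsection~\ref{vNA_MG}, this reads
$$m\{s\colon \decra(s,f(x))>\theta\}=d_{f(x)}(\theta).$$
It follows because $\decra(s,f(x))>\theta$ iff $s<d_{f(x)}(\theta)$, by right continuity of $d_{f(x)}$ and the definition $\decra(s,a)=\inf\{r\colon d_a(r)\leq s\}$. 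Third, we apply Tonelli to the indicator of $\{(s,x)\colon\decra(s,f(x))>\theta\}$:
$$(m\times\nu)\{\decra(\cdot,f(\cdot))>\theta\}=\int_\Gamma m\{s\colon\decra(s,f(x))>\theta\}\,d\nu(x)=\int_\Gamma d_{f(x)}(\theta)\,d\nu(x)=d_f(\theta).$$
Tonelli is legitimate because $\Gamma$ is $\sigma$-finite in the setting of Theorem~\ref{Mal-nc}, or can be localised to the $\sigma$-finite support of the integrands.

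The main obstacle is the joint measurability of $(s,x)\mapsto\decra(s,f(x))$, which is needed for the product-measure statement and for Tonelli. Measurability in $x$ for each fixed $s$ was proved above for $f\in\N$. For unbounded positive $f$ we pass to $f_n=f\chi_{[0,n]}(f)\in\N$. Then $\decra(s,f_n(x))\nearrow\decra(s,f(x))$ by normality of $\decra$ under increasing limits, and $f_n(x)=f(x)\chi_{[0,n]}(f(x))$ almost everywhere. For each $x$ the function $s\mapsto\decra(s,f(x))$ is nonincreasing and right continuous. This gives
$$\decra(s,f(x))=\lim_k\sum_j \decra(q_{j+1}^{(k)},f(x))\chi_{[q_j^{(k)},q_{j+1}^{(k)})}(s)$$
over dyadic rational partitions, since right continuity means $\decra(q_{j+1}^{(k)},f(x))\to\decra(s,f(x))$ as the right endpoints decrease to $s$. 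This expresses the function as a pointwise limit of jointly measurable functions, so it is jointly measurable. A secondary point to check is that the trace formula $\tau_\N(p)=\int_\Gamma\tau(p(x))\,d\nu(x)$ holds for projections $p$ in $\N$ of possibly infinite trace; this follows by monotone approximation using normality of both sides.
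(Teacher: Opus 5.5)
Your proposal is correct and follows essentially the paper's route: identify $\chi_{(\theta,\infty)}(f)(x)$ with $\chi_{(\theta,\infty)}(f(x))$ almost everywhere via Proposition \ref{ptwise-spectral}, use the fibred trace formula, use fibrewise equimeasurability of $f(x)$ and $\decra(\cdot,f(x))$, and then integrate over $\Gamma$ by Fubini/Tonelli. The differences are minor. You derive $m\{s\colon\decra(s,f(x))>\theta\}=d_{f(x)}(\theta)$ directly from the definition of $\decra$ and the right continuity of $d_{f(x)}$, whereas the paper uses $\decra(\cdot,\chi_{(t,\infty)}(|a|))=\chi_{(t,\infty)}(\decra(\cdot,|a|))$ together with $\int_0^\infty\decra(s,p)\,ds=\tau(p)$. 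You also explicitly justify the joint measurability and $\sigma$-finiteness that Tonelli needs, which the paper uses without comment.
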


\begin{proof}
We denote the measure on $\Gamma$ by $\nu$ and the Lebesgue measure by $m$. Let $e=\chi_{(\theta,\infty)}(|f|)$ and $E = \{(s,x)\in \mathbb{R}_+\times \Gamma: 
\decra(s,f(x))>\theta\}$. For $x\in \Gamma$ let $e_x = \chi_{(\theta,\infty)}(|f(x)|)$ and $E_x = \{s\in\mathbb{R}_+: (s,x)\in E\}$. Note that in this notation, $e(x) = e_x$ 
almost everywhere.
    
We claim that $d_{\decra(\cdot,f(x))}(\theta) = d_{f(x)}(\theta)$. This is a consequence of the observations that for any $a\in \tM$ and any $t>0$ we have that 
$\decra(\cdot, \chi_{(t,\infty)}(|a|)) = \chi_{(t,\infty)}(\decra(\cdot, |a|)) = \chi_{(t,\infty)}(\decra(\cdot, |a|))$ and that 
$\int_0^\infty\decra(s, \chi_{(t,\infty)}(|a|))\,ds =\tau(\chi_{(t,\infty)}(|a|))$. The first claim in turn follows by first noting that the continuous functions 
$$\gamma_n(s) = \left\{\begin{array}{ll} 0 & 0\leq s \leq t\\ ns-nt & t<s\leq t+\tfrac{1}{n}\\ 1 & t+\tfrac{1}{n} < s\end{array}\right.$$increase to 
$\chi_{(t,\infty)}$ and then applying \cite[Proposition 5.12(iii)(vi)]{GLbook}. Having noted this, it is now clear that 
$m(E_x) = d_{\decra(\cdot,f(x))}(\theta) = d_{f(x)}(\theta) = \tau(e_x)$. Furthermore
\begin{align*}
    d_f(\theta)  & =  \tau(e) \\
                        & = \int_\Omega\tau(e(x))dx \\
                        & = \int_\Omega\tau(e_x)dx \\
                        & =  \int_\Omega d_{f(x)}(\theta)dx \\
                        & =\int_\Omega\lambda(E_x)dx \\
                        & = (m\times\nu)(E) = (m\times\nu)(E) \\
                        & = d_{\decra(\cdot,f(\cdot))}(\theta).
\end{align*}

\end{proof}

The proof of the following theorem is ostensibly identical to the proof of \cite[Theorem 3.12]{Blo}.
\begin{theorem}
If $E_1$ and $E_2$ are rearrangement invariant Banach function spaces of measurable functions on $\mathbb{R}_+$ and $\Gamma$ respectively with K\"othe duals $E_1'$ and 
$E_2'$, then $E_1'(\Gamma)[E_2'(\M)] = (E_1(\Gamma)[E_2(\M)])'$. 
\end{theorem}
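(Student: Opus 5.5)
We prove the two inclusions $E_1'(\Gamma)[E_2'(\M)]\subseteq (E_1(\Gamma)[E_2(\M)])'$ and its converse separately, following the scheme of \cite[Theorem 3.12]{Blo}. Here the K\"othe dual of a noncommutative mixed norm space is understood via the pairing $\langle f,g\rangle=(\int_\Gamma\otimes\tau)(fg)=\int_\Gamma\tau(f(x)g(x))\,d\nu(x)$. The norm is $\|g\|'=\sup\{|\int_\Gamma\tau(f(x)g(x))\,d\nu(x)|\colon \|f\|_{E_1[E_2]}\leq 1\}$. The description of the trace on $L^\infty(\Gamma)\overline{\otimes}\M$ in subsection \ref{vNA_MG}, extended to $\tau$-measurable operators via Proposition \ref{Borel-unbd}, justifies computing the pairing fibrewise.

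\textbf{Easy inclusion.} Let $g\in E_1'(\Gamma)[E_2'(\M)]$ and $f\in E_1(\Gamma)[E_2(\M)]$. For almost every $x$, the noncommutative H\"older inequality for K\"othe duals \cite{DDdP-Kothe} gives
$$|\tau(f(x)g(x))|\leq \|f(x)\|_{E_2}\|g(x)\|_{E_2'}=\|f_1^x\|_{E_2}\|g_1^x\|_{E_2'}.$$
Integrating over $\Gamma$ and applying the classical H\"older inequality for the pair $(E_1,E_1')$ yields $|\langle f,g\rangle|\leq \|f\|_{E_1[E_2]}\|g\|_{E_1'[E_2']}$. The same estimate with $|g|$ shows $fg\in L^1$. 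Hence $g$ lies in the K\"othe dual with $\|g\|'\leq\|g\|_{E_1'[E_2']}$.

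\textbf{Reverse inclusion.} Let $g$ be in the K\"othe dual. We want a near-norming $f$ built fibrewise.

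1. Fix $x$ and use the noncommutative K\"othe duality $E_2(\M)'=E_2'(\M)$ \cite{DDdP-Kothe} to choose $h(x)\in E_2(\M)$ with $\|h(x)\|_{E_2}\leq 1$ and $\tau(h(x)g(x))\geq (1-\epsilon)\|g(x)\|_{E_2'}$.

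2. Use the classical duality $E_1'=E_1$'s K\"othe dual to pick a scalar $\phi\geq 0$ in $E_1(\Gamma)$ with $\|\phi\|_{E_1}\leq 1$ and $\int\phi(x)\|g(x)\|_{E_2'}\,d\nu\geq(1-\epsilon)\|\,\|g_1^x\|_{E_2'}\|_{E_1'}$. Here the measurability of $x\mapsto\|g_1^x\|_{E_2'}$ comes from the measurability proposition for $x\mapsto\decra(s,g(x))$ together with the Fatou property.

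3. Set $f(x)=\phi(x)h(x)$. Then $\|f\|_{E_1[E_2]}\leq 1$, and pairing gives $\|g\|'\geq(1-\epsilon)^2\|g\|_{E_1'[E_2']}$.

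For these steps we first reduce to $g$ bounded and supported on a set of finite measure, of the form $(\chi_F\otimes e)\,g\,(\chi_F\otimes e)$ truncated spectrally, and then pass to general $g$ by the Fatou property of both norms. We also assume, by the second Blozinski-type reduction, that $E_2$ is separable-like enough to be normed by a countable family.

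\textbf{Main obstacle.} The hard step is making the fibrewise choice $x\mapsto h(x)$ measurable, so that $f$ genuinely defines an element of $\widetilde{L^\infty(\Gamma)\overline{\otimes}\M}$. First try the canonical choice available for the dominant cases: take $h(x)=u(x)^*\,\psi(|g(x)|)$ via the polar decomposition and Borel functional calculus. For Orlicz spaces, $\psi$ is essentially the derivative of the complementary Young function, normalised. Proposition \ref{Borel-unbd} and Proposition \ref{ptwise-spectral} then guarantee $\psi(|g|)(x)=\psi(|g(x)|)$ almost everywhere, so measurability is automatic. In general, approximate $g$ by operators taking countably many values on a partition of $\Gamma$, as the simple functions in Takesaki's measurability theory allow, choose $h$ piecewise, and pass to the limit using normality of the pairing.
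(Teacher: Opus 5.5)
Your proposal is correct in outline and follows essentially the same Blozinski-style route as the paper. For the easy inclusion you use fibrewise H\"older followed by the outer H\"older inequality. For the reverse inclusion you truncate to finite-trace pieces, take the product of a near-norming fibre element with a near-norming scalar function, and pass to the limit by Fatou.

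The measurable-selection issue you single out is exactly the step the paper passes over when it simply asserts that for every $\epsilon>0$ one can find suitable $g\in\widetilde{\N}$ and $h$. In the general case your countable-approximation fix needs two adjustments to go through. First, run it in $L^1(\Gamma;L^1(\M))$, which is available after your reduction to bounded, finitely supported $g$. Second, choose the near-norming elements uniformly bounded. Both are needed because Takesaki's measurability gives only strong$^*$-measurability of $\M$-valued functions, not norm-approximability by countably valued ones.
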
    

\begin{proof} We show that the spaces are canonically isometric. Suppose that $f\in E_1'(\Gamma)[E_2'(\M)]$. Then by \cite[Theorem 3.9.10]{DdPS},
\begin{align*}
    \|f\|_{(E_1(\M)[E_2(\Gamma)])'} & = \sup_{\|g\|_{E_1[E_2]}\leq 1} \tau(|fg|)\\
                       & = \sup_{\|g\|_{E_1[E_2]}\leq 1}\int_\Gamma\int_0^\infty\decra(s,f(x)g(x))dsdx \\
                       & \leq \sup_{\|g\|_{E_1[E_2]}\leq 1}\int_\Gamma\int_0^\infty\decra(s,f(x))\decra(s,g(x))dsdx \\
                       & \leq \sup_{\|g\|_{E_1[E_2]}\leq 1}\int_\Gamma\|f(x)\|_{E_1'}\|g(x)\|_{E_1}dx\\
                      & \leq \sup_{\|g\|_{E_1[E_2]}\leq 1}\big\|\|f(x)\|_{E_1'}\big\|_{E_2'}\big\|\|g(x)\|_{E_1}\big\|_{E_2} \\
                      &\leq \|f(x)\|_{E_1'[E_2']}<\infty.
\end{align*}
Hence $f\in (E_1(\M)[E_2(\Gamma)])'$. 

Now let $f\in(E_1(\M)[E_2(\Gamma)])'$. Let $P = p\otimes \chi_I$ for $p\in\mathbb{P}(\M)$ with finite trace and $I\subset \Omega$ with finite measure, then

$$\big\|\|fP\|_{E_1'}\|_{E_2'} = \sup_h\int_I\sup_g\int_0^\infty \decra(s,f(x)P(x))\decra(s,g(x))ds\; h(x)dx$$where $g$ and $h$ are functions such that $g\in P\N P$, the 
support of $h$ is in $I$, $\|g(x)\|_{E1}\leq 1$ for each $x$ and $\|h\|_{E_2}\leq 1$. For every $\epsilon>0$ we can find $g\in\widetilde{\N}$ and an almost everywhere finite 
measurable function $h$ on $\Gamma$ such that the above is majorised by 
$$\int_I\int_0^\infty \decra(s,f(x)P(x))\decra(s,g(x))h(x)\; ds\; dt + \int_I\epsilon h(t) dt+\epsilon.$$However, as $\|g(x)h(x)\|_{E_1[E_2]}\leq 1,$ this is in turn 
majorised by $$\sup_{\|k(x)\|_{E_1[E_2]}\leq 1}\int_\Gamma\int_0^\infty \decra(s,f(x)P(x))\decra(s,k(x))ds \; dx + \epsilon \|h\chi_I\|_1+\epsilon.$$
Now since the restriction of $E_2$ to $I$ is contained in $L^1(I),$ we can find $K>0$ dependent on $E_2$ and the projection $P$ such that the last expression is majorised by 
$$\|fP\|_{(E_1(\M)[E_2(\Gamma)])'} + \epsilon K\|h\|_{E_2} + \epsilon \leq \|fP\|_{(E_1(\M)[E_2(\Gamma)])'} + \epsilon K +\epsilon.$$

If we now first let $\epsilon \rightarrow 0$, after which we then let $P \uparrow \I_\N$ by letting $p\uparrow \I_\M$ and $I\uparrow \Gamma$, we get that 
$\|f\|_{E_1'[E_2']}\leq \|f\|_{(E_1(\M)[E_2(\Gamma)])'}$, completing the proof.
\end{proof}

\section{Comparing $\widetilde{T}$ and $T^\#$}

As far as lifting the strategy of \cite{L-QM} to the present setting is concerned, Corollary \ref{4.6c} is the most crucial result of this section. We pause to note the 
following fact which will be silently used throughout this section. Given $f\in L^{1+\infty}(G)$ Proposition \ref{Thashlim} ensures that for a large enough compact 
neighbourhood $K$ of $e\in G$ we have that $T^\#(f)=\delta_e((T\otimes \mathrm{Id})(\chi_K\otimes_\alpha f))$. We start our analysis by noting an easy corollary to Lemma 
\ref{2.4}.

\begin{corollary} 
Let $X$ be a Young function. For every $h\in L^{1\cap\infty}(G)$ and $f\in L^{1+\infty}(\M, \tau)$ we have that $\|h\otimes_\alpha f\|_X =\|h\otimes f\|_X$. Follows from 
Lemma \ref{2.4}
\end{corollary}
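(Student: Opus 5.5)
The statement reduces to Lemma \ref{2.4} together with the rearrangement invariance of the Luxemburg norm on the noncommutative Orlicz space $L^X(L^\infty(G)\overline{\otimes}\M)$. Fix $h\in L^{1\cap\infty}(G)$ and $f\in L^{1+\infty}(\M,\tau)$, and write $\N=L^\infty(G)\overline{\otimes}\M$ with trace $\tau_\N=\int_G\otimes\tau$. By Lemma \ref{2.4}, both $h\otimes f$ and $h\otimes_\alpha f$ lie in $L^{1+\infty}(\N)$ and have the same distribution function, that is
$$(\textstyle\int_G\circ\tau)\bigl(\chi_{(\lambda,\infty)}(|h\otimes f|)\bigr)=(\textstyle\int_G\circ\tau)\bigl(\chi_{(\lambda,\infty)}(|h\otimes_\alpha f|)\bigr)$$
for every $\lambda>0$. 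Since the decreasing rearrangement $\decra(t,a)=\inf\{s\geq 0\colon d_a(s)\leq t\}$ is defined purely in terms of the distribution function, it follows at once that $\decra(\cdot,h\otimes f)=\decra(\cdot,h\otimes_\alpha f)$ as functions on $\mathbb{R}_+$.

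Next I would use the standard identity $\tau_\N(X(|a|/\lambda))=\int_0^\infty X(\decra(s,a)/\lambda)\,ds$ for $a\in\widetilde{\N}$ and $\lambda>0$. This is the same manipulation used in the proof of Theorem \ref{Mal-nc}. It rests on $\decra(\cdot,X(|a|/\lambda))=X(\decra(\cdot,a)/\lambda)$ for the continuous increasing function $X$, which is \cite[Proposition 5.12]{GLbook}, together with $\tau_\N(b)=\int_0^\infty\decra(s,b)\,ds$ for $b\geq 0$. One small point needs care: if $X$ takes the value $\infty$ beyond $b_X$, the identity still holds in the extended sense, using left continuity at $b_X$. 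Applying this identity to $a=h\otimes f$ and to $a=h\otimes_\alpha f$ gives equal modulars for every $\lambda$.

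Consequently the sets $\{\lambda>0\colon \tau_\N(X(|a|/\lambda))\leq 1\}$ coincide for the two elements. Taking infima yields $\|h\otimes_\alpha f\|_X=\|h\otimes f\|_X$, with both sides finite or infinite simultaneously. Equivalently, one may invoke directly the fact, recalled in the preliminaries, that the noncommutative Orlicz norm is given by $\|a\|_X=\|\decra(\cdot,a)\|_{L^X(\mathbb{R}_+)}$.

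I expect essentially no obstacle here. The only step requiring attention is justifying that equality of distribution functions in $\N$ passes to equality of the Orlicz modulars, and this is handled by the rearrangement identity above.
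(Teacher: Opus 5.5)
Your argument is correct and follows the route the statement itself announces: Lemma \ref{2.4} gives equimeasurability of $h\otimes f$ and $h\otimes_\alpha f$, hence equal decreasing rearrangements, and rearrangement invariance of the Orlicz norm then gives equal norms. This is the same reasoning the paper uses just after Lemma \ref{2.4} for the $L^{1+\infty}$-norm, whereas the paper's formal proof only records the claim as a trivial consequence of Takesaki's Proposition IV.7.16, so your write-up is more explicit than the original.
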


\begin{proof}
This actually is actually a trivial consequence of \cite[Proposition IV.7.16]{Tak}.
\end{proof}

\begin{lemma}[Proposition 7.3; \cite{GLbook}]\label{lem2p22} If $\varphi$ is a fundamental function, then 
$\displaystyle\sup_{t>0}\decra(t,f)\varphi(t)=\sup_{s>0}s\varphi(d_f(s))$.\end{lemma}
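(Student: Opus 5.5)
The plan is to prove the two inequalities separately. Both rest on the duality between the decreasing rearrangement and the distribution function: for $s,t\geq 0$ we have $\decra(t,f)>s$ if and only if $d_f(s)>t$. This follows directly from the definition $\decra(t,f)=\inf\{s\geq 0\colon d_f(s)\leq t\}$ together with the right continuity of $d_f$. Right continuity itself comes from normality of $\tau$: the projections $\chi_{(s+\epsilon,\infty)}(|f|)$ increase to $\chi_{(s,\infty)}(|f|)$ as $\epsilon\searrow 0$. Since everything depends only on $d_f$, we may regard $f$ as any element of $\tM$, or even argue classically on $\mathbb{R}_+$.

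\textbf{Step 1 ($\leq$).} Fix $t>0$ with $\decra(t,f)>0$, and take any $0<s<\decra(t,f)$. By the duality $d_f(s)>t$, so monotonicity of $\varphi$ gives $s\varphi(t)\leq s\varphi(d_f(s))$, which is bounded by the right-hand supremum. Letting $s\nearrow\decra(t,f)$ yields $\decra(t,f)\varphi(t)\leq\sup_{s>0}s\varphi(d_f(s))$. If $d_f(s)=\infty$ for some $s$, we interpret $\varphi(\infty)=\lim_{t\to\infty}\varphi(t)$, and the inequality still holds.

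\textbf{Step 2 ($\geq$).} Fix $s>0$ with $0<d_f(s)$, and take any $0<t<d_f(s)$. Then $\decra(t,f)>s$, so monotonicity of $\varphi$ gives $s\varphi(t)\leq\decra(t,f)\varphi(t)$, which is bounded by the left-hand supremum. We then let $t\nearrow d_f(s)$ and need $\varphi(t)\to\varphi(d_f(s))$. This requires left continuity of $\varphi$ at $d_f(s)$.

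\textbf{Main obstacle.} The delicate point is this continuity of $\varphi$. A quasi-concave function is continuous on $(0,\infty)$. Indeed, for $t<t_0$ we have
\[
\varphi(t)\leq\varphi(t_0)\leq (t_0/t)\varphi(t),
\]
and this sandwich forces $\varphi(t)\to\varphi(t_0)$. I would insert exactly this sandwich to justify the limit in Step 2. The same argument, applied via monotone limits, handles the case $d_f(s)=\infty$. Since all fundamental functions are quasi-concave (as recalled in the preliminaries), this completes the proof, with the remaining edge cases ($\decra(t,f)=0$ or $d_f(s)=0$) contributing zero to the respective suprema.
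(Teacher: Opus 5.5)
Your proof is correct. The paper gives no argument for this lemma; it simply imports it from \cite[Proposition 7.3]{GLbook}, so there is no in-text proof to compare against. Your two-inequality argument is the standard one and makes the statement self-contained: it rests on the equivalence $\mu(t,f)>s\iff d_f(s)>t$ together with the continuity of quasi-concave functions on $(0,\infty)$, which you obtain from the sandwich $\varphi(t)\leq\varphi(t_0)\leq (t_0/t)\varphi(t)$.

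Two small remarks. First, you do not actually need the right continuity of $d_f$, and hence not the normality of $\tau$. Step 1 only uses $s<\mu(t,f)\Rightarrow d_f(s)>t$, which is immediate from the definition of $\mu(t,f)$ as an infimum. Step 2 only needs the non-strict conclusion $t<d_f(s)\Rightarrow\mu(t,f)\geq s$, which follows from the monotonicity of $d_f$ alone. Second, your convention $\varphi(\infty)=\lim_{t\to\infty}\varphi(t)$ is genuinely required to give the right-hand side a meaning when $d_f(s)=\infty$. The statement in the paper leaves this implicit.
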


\begin{lemma}\label{4.6a} Let $T:L^{loc}(G)\to C(G)$ be a transferable operator, $E$ be a compact neighbourhood of $e\in G$ and $U$ the neighbourhood guaranteed by 
semilocality. For any $s\in E$ we have that  $(\delta_s\otimes \mathrm{Id})[(T\otimes \mathrm{Id})(\chi_K\otimes_\alpha f)]=\alpha_s(T^\#(f))$.
\end{lemma}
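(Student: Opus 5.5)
Since $T^\#(f)=\delta_e((T\otimes\mathrm{Id})(\chi_{K'}\otimes_\alpha f))$ for every large compact $K'$, the plan is to compare evaluation at $s$ with evaluation at $e$ by left translation. On the group side this uses the translation invariance of $T$; on the algebra side it uses the automorphism $\alpha_s$. Throughout, $K$ is a compact set containing $U^{-1}E$, so Proposition \ref{2.15} applies at every point of $E$. Moreover, $K$ may be enlarged freely, since by Proposition \ref{2.15} the value of $(\delta_s\otimes\mathrm{Id})(T\otimes\mathrm{Id})(\chi_K\otimes_\alpha f)$ for $s\in E$ does not change when $K$ grows.

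\textbf{Step 1 (a twisted translation identity).} First I would check that
\[
\lambda_s\otimes\alpha_s\colon h\otimes a\mapsto \lambda_s(h)\otimes\alpha_s(a)
\]
extends continuously to $L^1(K_0)\widetilde{\otimes}_\pi W\to L^1(sK_0)\widetilde{\otimes}_\pi W$ and to $C(G)\widetilde{\otimes}_\epsilon W$. The point is that $\alpha_s$ is (loc)-continuous: it maps a finite-trace projection $e$ to a finite-trace projection $\alpha_s(e)$, and $p_e(\alpha_s(a))=p_{\alpha_{s^{-1}}(e)}(a)$. Left translation is isometric on $L^1$ by left invariance of Haar measure.

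Next, on skew tensors I compute
\[
\bigl[(\lambda_s\otimes\alpha_s)(\chi_{K}\otimes_\alpha f)\bigr](g)=\chi_K(s^{-1}g)\,\alpha_s(\alpha_{s^{-1}g}(f))=\chi_{sK}(g)\,\alpha_g(f),
\]
that is,
\[
(\lambda_s\otimes\alpha_s)(\chi_K\otimes_\alpha f)=\chi_{sK}\otimes_\alpha f.
\]

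\textbf{Step 2 (intertwining).} Translation invariance gives $\lambda_s T=T\lambda_s$. Checking on simple tensors and extending by continuity, exactly as in the commuting diagrams before Proposition \ref{exist}, yields
\[
(T\otimes\mathrm{Id})(\lambda_s\otimes\alpha_s)=(\lambda_s\otimes\alpha_s)(T\otimes\mathrm{Id}).
\]
Evaluating at a point also intertwines: $\delta_s\circ\lambda_s=\delta_e$ on $C(G)$. Hence
\[
(\delta_s\otimes\mathrm{Id})(\lambda_s\otimes\alpha_s)=\alpha_s\circ(\delta_e\otimes\mathrm{Id}),
\]
which again I would verify on simple tensors and extend by continuity.

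\textbf{Step 3 (assembly).} Combining Steps 1 and 2 gives
\[
\alpha_s\bigl((\delta_e\otimes\mathrm{Id})(T\otimes\mathrm{Id})(\chi_K\otimes_\alpha f)\bigr)
=(\delta_s\otimes\mathrm{Id})(T\otimes\mathrm{Id})(\chi_{sK}\otimes_\alpha f).
\]
The left-hand side equals $\alpha_s(T^\#(f))$ by Definition \ref{Thashdef}, since $K\supset U^{-1}E$.

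For the right-hand side, choose $K$ large, for instance $K\supset s^{-1}U^{-1}E\cup U^{-1}E$. Then $sK\supset U^{-1}E$. Proposition \ref{2.15} then lets me replace both $\chi_{sK}$ and the original $\chi_K$ by $\chi_{U^{-1}E}$ when evaluating at $s\in E$. This gives the claim.

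\textbf{Main obstacle.} The delicate point is justifying that $T\otimes\mathrm{Id}$ genuinely commutes with $\lambda_s\otimes\alpha_s$ on the completed tensor products. The map $\lambda_s\otimes\alpha_s$ moves the support from $K_0$ to $sK_0$, so the domains $L^1(K_0)\widetilde{\otimes}_\pi W$ change. I would handle this by working inside $L^1(K_1)\widetilde{\otimes}_\pi W$ for a compact $K_1\supset K_0\cup sK_0$, using the injections $\iota_{0,1}$ from the lemma preceding Proposition \ref{Thashlim}.

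I would also need that the approximating nets of simple tensors $P_n$ are mapped to approximating nets. This follows from the continuity of $\lambda_s\otimes\alpha_s$ in each seminorm $\int p_e(\cdot)$, which in turn uses $\tau\circ\alpha_s=\tau$.
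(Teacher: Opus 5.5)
Your argument is correct and is essentially the paper's proof. Both rest on translation invariance of $T$ and on the identity relating a translated skew tensor $\chi_K\otimes_\alpha f$ to a skew tensor over a translated set, twisted by $\alpha_s$; both then enlarge $K$ via Proposition \ref{2.15}. Packaging the translation and the automorphism into the single map $\lambda_s\otimes\alpha_s$, rather than the paper's separate $\lambda_s\otimes\mathrm{Id}$ and $\mathrm{Id}\otimes\alpha_s$, is only cosmetic. It does have the merit of using the stated convention $\lambda_g f(s)=f(g^{-1}s)$ consistently, and of spelling out the (loc)-continuity and support-shifting checks that the paper's proof passes over.
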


\begin{proof} First note that in the required equality, we may use Proposition \ref{2.15} to replace $U^{-1}E$ by an arbitrarily large compact superset $K$ of $U^{-1}E$. With 
this in mind let $s\in E$ and $f\in L^{1+\infty}(\M)$ be given, and select a net of elements of the form $\sum_{j=1}^n(\chi_{B_j}\otimes \alpha_{g_j}(f))$ converging 
pointwise ae to $(\chi_K\otimes_\alpha f)$, where $B_j$ ($1\leq j\leq n$) are disjoint measurable sets such that $\cup_{j=1}^n B_J = K$. Notice that for any $t,s \in G$, 
$st\in K\Leftrightarrow t\in s^{-1}K$. Thus given $s,t\in G$, we have $(\lambda_s\otimes \mathrm{Id})(\sum_{j=1}^n(\chi_{B_j}\otimes 
\alpha_{g_j}(f)))(t)=\sum_{j=1}^n(\chi_{B_j}(st)\alpha_{g_j}(f))\to \chi_K(st)\alpha_{st}(f)=\chi_{s^{-1}K}(t)\alpha_{s}(\alpha_t(f))$. 

Next using a similar approximation, one can show that for any fixed compact neighbourhood $K_0$, we have that $$(\mathrm{Id}\otimes \alpha_s)(\chi_{K_0}\otimes_\alpha f)(t)= 
\chi_{K_0}(t)\alpha_s(\alpha_t(f)).$$Thus by continuity $$(\lambda_s\otimes \mathrm{Id})(\chi_K\otimes_\alpha f)(t)= (\mathrm{Id}\otimes 
\alpha_s)(\chi_{s^{-1}K}\otimes_\alpha f)(t).$$

If $K$ is a large enough neighbourhood of $e\in G$, we will have that $(\delta_e\otimes \mathrm{Id})(T\otimes \mathrm{Id})(\chi_{s^{-1}K}\otimes_\alpha f)=T^\#(f)$. Now 
recall that $T$ and $\lambda_s$ commute. Using this fact and hoping for the best, we then formally have that 
\begin{eqnarray*}(\delta_s\otimes \mathrm{Id})(T\otimes \mathrm{Id})(\chi_K\otimes_\alpha f) &=& (\delta_e\otimes \mathrm{Id}) (\lambda_s\otimes \mathrm{Id})(T\otimes \mathrm{Id})(\chi_K\otimes_\alpha f)\\
&=&(\delta_e\otimes \mathrm{Id}) (T\otimes \mathrm{Id})(\lambda_s\otimes \mathrm{Id})(\chi_K\otimes_\alpha f)\\
&=& (\delta_e\otimes \mathrm{Id}) (T\otimes \mathrm{Id})(\mathrm{Id}\otimes \alpha_s)(\chi_{s^{-1}K}\otimes_\alpha f)\\
&=&(\mathrm{Id}\otimes \alpha_s)(\delta_e\otimes \mathrm{Id}) (T\otimes \mathrm{Id})(\chi_{s^{-1}K}\otimes_\alpha f)\\
&=& \alpha_s(T^\#(f)).
\end{eqnarray*}
Hence the claim follows.
\end{proof}

\begin{proposition}[\cite{dBL}, inequality (4.5)]\label{dBL_4.5} Let $T:L^{loc}(G)\to C(G)$ be a transferable operator, $K$ be a compact neighbourhood of $e\in G$ and $U$ the 
neighbourhood guaranteed by semilocality. Given $f\in L^{1+\infty}(\M)$, let $\overline{p}=\chi_{(\lambda,\infty)}(\widetilde{T}(\chi_{U^{-1}K}\otimes_\alpha f))$ 
and $p = \chi_{(\lambda,\infty)}(T^\#(f))$. Then $(\int_G\otimes\tau)(\overline{e})\geq \tau(p)|K|$.
\end{proposition}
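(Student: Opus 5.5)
We prove this by comparing the projection $\overline{p}$ in $L^\infty(G)\overline{\otimes}\M$ with the ``twisted'' copy of $p$ spread over $K$. (We read the $\overline{e}$ in the statement as $\overline{p}$.) Write $F=\widetilde{T}(\chi_{U^{-1}K}\otimes_\alpha f)$, regarded as an operator valued function $s\mapsto F(s)$ on $G$. By Proposition \ref{ptwise-spectral}, and more generally the pointwise spectral calculus of subsection \ref{vNA_MG}, we have $\overline{p}(s)=\chi_{(\lambda,\infty)}(|F(s)|)$ for almost every $s$. Here we take the spectral projection of $|F|$, as in the definition of $d_a$. The trace formula for $\int_G\otimes\tau$ then gives
$$\left(\int_G\otimes\tau\right)(\overline{p})=\int_G\tau(\overline{p}(s))\,ds\geq \int_K\tau\big(\chi_{(\lambda,\infty)}(|F(s)|)\big)\,ds.$$

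The key step is to identify $F(s)$ for $s\in K$. Lemma \ref{4.6a} is applied with $E=K$, together with Proposition \ref{2.15}, which allows $U^{-1}K$ to be used as the cutting set. It yields $F(s)=(\delta_s\otimes\mathrm{Id})[(T\otimes\mathrm{Id})(\chi_{U^{-1}K}\otimes_\alpha f)]=\alpha_s(T^\#(f))$ for every $s\in K$. Since $\alpha_s$ is a trace preserving $*$-automorphism of $\tM$, it commutes with the Borel functional calculus, so $|\alpha_s(b)|=\alpha_s(|b|)$. Hence
$$\chi_{(\lambda,\infty)}(|F(s)|)=\alpha_s\big(\chi_{(\lambda,\infty)}(|T^\#(f)|)\big)=\alpha_s(p),\qquad \tau(\alpha_s(p))=\tau(p).$$
Integrating over $K$ gives $\int_K\tau(p)\,ds=\tau(p)|K|$, which is the claimed inequality.

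The main obstacle is reconciling two descriptions of $F$. The first is $F$ as an element of $C(G)\widetilde{\otimes}_\epsilon W$, where point evaluations $\delta_s$ make sense for every $s$. The second is $F$ as an element of $L^{1+\infty}(L^\infty(G)\overline{\otimes}\M)$, the operator $\widetilde{T}(\cdot)$ whose spectral projection $\overline{p}$ is taken in the von Neumann algebra. I would first verify that these agree almost everywhere on $K$. The route is to check the agreement on linear combinations of simple tensors $h\otimes a$, where $(T\otimes\mathrm{Id})(h\otimes a)=T(h)\otimes a$ in both pictures. One then passes to limits: approximants converging in (\emph{loc}) in the $W$-picture should converge in measure in the $\widetilde{\N}$-picture, at least after restriction to $\chi_K\otimes e$ with $\tau(e)<\infty$. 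This uses the fact that the norm topology is finer than (\emph{loc}), and that $\widetilde{T}$ is normal and contractive on $L^{1+\infty}$.

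Once the two descriptions agree almost everywhere on $K$, the pointwise identification $\overline{p}(s)=\alpha_s(p)$ holds for almost every $s\in K$. Since only an integral over $K$ is involved, almost everywhere agreement suffices for the inequality.
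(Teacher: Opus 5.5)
Your argument is correct and is essentially the paper's own proof: you identify $\overline{p}(s)$ with $\alpha_s(p)$ for $s\in K$ via Lemma \ref{4.6a} and Proposition \ref{2.15}, use $\tau\circ\alpha_s=\tau$, and discard the contribution of $G\setminus K$; you are also right that $\overline{e}$ is a typo for $\overline{p}$. The paper simply assumes, citing the pointwise calculus of subsection \ref{vNA_MG}, that the $C(G)\widetilde{\otimes}_\epsilon W$ and $L^\infty(G)\overline{\otimes}\M$ descriptions of $\widetilde{T}(\chi_{U^{-1}K}\otimes_\alpha f)$ agree, so your third paragraph fills a step it leaves implicit; if you carry it out for the $\M$-summand of $f$, you cannot rely on $L^{1+\infty}$-norm approximation and contractivity, because $g\mapsto\alpha_g(f)$ need not be norm continuous, and you should instead take uniformly bounded $\sigma$-strongly convergent approximants (which converge in (\emph{loc}) on $K_0$) and use the normality of $\widetilde{T}$.
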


\begin{proof} Taking into account the comments in subsection \ref{vNA_MG} regarding the Borel functional calculus and projections in $\M\overline{\otimes}L^\infty(G)$, we 
will for each $s\in K$ have that, 
\begin{eqnarray*}
\overline{p}(s) &=& \chi_{(\lambda,\infty)}(|(\delta_s\otimes \mathrm{Id})[(T\otimes \mathrm{Id})(\chi_{U^{-1}K}\otimes_\alpha f)]|)\\
&=& \chi_{(\lambda,\infty)}(|\alpha_s(T^\#(f))|)\\
&=& \alpha_s(\chi_{(\lambda,\infty)}(|T^\#(f)|))\\
&=& \alpha_s(p).
\end{eqnarray*}
But then $$\left(\int_G\otimes\tau\right)(\overline{p})= \int_G\tau(\overline{p}(s))\,ds\geq \int_K\tau(\overline{p}(s))\,ds =\int_K\tau(\alpha_s(p))\,ds = \int_K\tau(p)\,ds = |K|\tau(p).$$
\end{proof}

The above proposition now yields following corollary which we may use to estimate the norm of $T^\#(f)$ in terms of that of $\widetilde{T}(\chi_{U^{-1}K}\otimes_\alpha f)$.

\begin{corollary}[dBL Lemma 4.6]\label{4.6c}
Let $T:L^{1+\infty}(G)\to C(G)$ be a transferable operator, $K$ a compact neighbourhood of $e\in G$ and $U$ the neighbourhood guaranteed 
by semilocality. Given $f\in L^{1+\infty}(\M,\tau)$, we have that $\decra(t,\widetilde{T}(\chi_{U^{-1}K}\otimes_\alpha f))\geq 
\decra(t/|K|,T^\#(f))$ for each $t\geq 0$.
\end{corollary}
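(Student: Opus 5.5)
The plan is to pass from the projection inequality of Proposition \ref{dBL_4.5} to distribution functions, and from there to decreasing rearrangements via the defining formula $\decra(t,a)=\inf\{s\geq 0\colon d_a(s)\leq t\}$. Write $F=\widetilde{T}(\chi_{U^{-1}K}\otimes_\alpha f)$ and $a=T^\#(f)$. First, for each $\lambda>0$, Proposition \ref{dBL_4.5} (read with $\overline{p}$ in place of the misprinted $\overline{e}$) gives
$$d_F(\lambda)=\Big(\int_G\otimes\tau\Big)\big(\chi_{(\lambda,\infty)}(|F|)\big)\geq |K|\,\tau\big(\chi_{(\lambda,\infty)}(|a|)\big)=|K|\,d_a(\lambda).$$
One point needs checking here. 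That proposition was phrased with $\chi_{(\lambda,\infty)}$ applied to $\widetilde{T}(\cdots)$ and $T^\#(f)$ themselves, but its proof passes through the pointwise identity $\overline{p}(s)=\chi_{(\lambda,\infty)}(|\alpha_s(T^\#(f))|)$. I would rerun it with absolute values inserted throughout, using three facts:
\begin{itemize}
\item Proposition \ref{ptwise-spectral} (applied to $|F|$), so that $\chi_{(\lambda,\infty)}(|F|)(s)=\chi_{(\lambda,\infty)}(|F(s)|)$ for almost every $s$;
\item Lemma \ref{4.6a}, which identifies $F(s)$ with $\alpha_s(a)$ for $s\in K$;
\item the fact that the $*$-automorphism $\alpha_s$ commutes with the absolute value and the Borel functional calculus, and is trace preserving.
\end{itemize}
This yields $d_F(\lambda)\geq |K|\,d_a(\lambda)$ for every $\lambda>0$.

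Second, I convert this to rearrangements. Fix $t\geq 0$ and let $s$ be any value with $d_F(s)\leq t$. Then $|K|\,d_a(s)\leq d_F(s)\leq t$, so $d_a(s)\leq t/|K|$. Hence
$$\{s\geq 0\colon d_F(s)\leq t\}\subseteq\{s\geq 0\colon d_a(s)\leq t/|K|\},$$
and taking infima gives $\decra(t,F)\geq\decra(t/|K|,a)$, which is the claim. The case $|K|=0$ cannot arise, since $K$ is a neighbourhood of $e$ and so has positive Haar measure. At $s=0$ the inequality between distribution functions is needed only as $d_a(0)\leq d_F(0)/|K|$, which follows from the case $\lambda>0$ by right-continuity of distribution functions.

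The main obstacle is the first step, specifically the justification of Lemma \ref{4.6a}, whose proof is only formal ("hoping for the best"). To make the pointwise identification $F(s)=\alpha_s(T^\#(f))$ rigorous, I would:
\begin{itemize}
\item approximate $\chi_K\otimes_\alpha f$ by simple tensors in $L^1(K_0)\widetilde{\otimes}_\pi W$;
\item use the translation invariance $\lambda_s T=T\lambda_s$ on each simple tensor;
\item pass to the limit using continuity of $\delta_s\otimes\mathrm{Id}$ on $C(G)\widetilde{\otimes}_\epsilon W$ together with the (loc)-continuity of $\alpha_s$;
\item note that Proposition \ref{2.15} allows $U^{-1}K$ to be replaced by any larger compact set, which absorbs the shift from $K$ to $s^{-1}K$.
\end{itemize}
Once that identity holds for almost every $s\in K$, the remaining argument is the elementary monotonicity computation above.
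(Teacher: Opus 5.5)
Your proposal is correct and follows the paper's own route. Proposition~\ref{dBL_4.5} (with absolute values inserted, as its proof actually uses) gives $d_F(\lambda)\geq |K|\,d_{T^\#(f)}(\lambda)$, and the passage to rearrangements, which the paper delegates to Proposition 2.2 of Fack--Kosaki, is exactly your set-inclusion argument with the defining formula $\decra(t,a)=\inf\{s\geq 0\colon d_a(s)\leq t\}$; your extra remarks on the misprint, the case $s=0$, and how to make the formal Lemma~\ref{4.6a} rigorous go beyond what the paper supplies but do not change the argument.
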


\begin{proof} The proof follows fairly directly from the preceding result and \cite[Proposition 2.2]{FK}.
\end{proof}

\section{Norm estimates for $T^\#$}

In addition to the estimates afforded by Corollary \ref{4.6c}, we shall also need a noncommutative version version of the following theorem of O'Neil.

\begin{theorem}[\cite{ONeil}, Theorem 8.15]\label{8.15} Let $(X,\nu_X)$ and $(Y,\nu_y)$ be general measurable space and $f$ and $g$ measurable functions on $X$ and $Y$ 
respectively, with $h=f\otimes g$. Let $A$, $B$, $C$ be Young functions for which there is some $\theta>0$ satisfying for which $A^{-1}(u)B^{-1}(v)\leq \theta C^{-1}(uv)$ for 
all $u,v\geq 0$  Then 
\begin{enumerate} 
\item $f\in M_A, g\in L^B \Rightarrow h\in M_C(X\times Y)$ with $\|h\|_{M_C}\leq 3\theta\|f\|_{M_A}\|g\|_{L_B}$.
\item $f\in L_A, g\in L^B \Rightarrow h\in L_C(X\times Y)$ with $\|h\|_{L_C}\leq \theta\|f\|_{L_A}\|g\|_{L_B}$ (T And\^o).
\item $f\in \Lambda_A, g\in \Lambda_B \Rightarrow h\in \Lambda_C(X\times Y)$ with $\|h\|_{\Lambda_C}\leq 2e^3\theta\|f\|_{\Lambda_A}\|g\|_{\Lambda_B}$.
\end{enumerate}
\end{theorem}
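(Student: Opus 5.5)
The plan is to reduce all three claims to the behaviour of the distribution function of the product $h=f\otimes g$, then work in each part with the norm formula that uses distribution functions or rearrangements most directly.

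\textbf{Basic identity.} By Fubini, for $\lambda>0$,
$$d_h(\lambda)=(\nu_X\times\nu_Y)\{|f(x)g(y)|>\lambda\}=\int_Y d_f\bigl(\lambda/|g(y)|\bigr)\,d\nu_Y(y),$$
with the integrand read as $0$ where $g(y)=0$. Everything should follow from this. One would also use the fundamental function formula (\ref{fundlux}), $\varphi_A(t)=1/A^{-1}(1/t)$. With it, the hypothesis $A^{-1}(u)B^{-1}(v)\leq\theta C^{-1}(uv)$ becomes the submultiplicativity
$$\varphi_C(st)\leq\theta\,\varphi_A(s)\varphi_B(t).$$

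\textbf{Part (2) (And\^o).} Normalise $\|f\|_A=\|g\|_B=1$. The hypothesis, read as an inequality between Young functions, gives $C(|fg|/\theta)\leq$ something of the form $A(|f|)+B(|g|)$ after a suitable rescaling. The natural route is to substitute $u=A(|f|)$ and $v=B(|g|)$, which gives $|f|\leq A^{-1}(u)$ and $|g|\leq B^{-1}(v)$, and hence
$$C\bigl(|f(x)g(y)|/\theta\bigr)\leq uv=A(|f(x)|)\,B(|g(y)|).$$
Integrating over $X\times Y$ and applying Tonelli gives $\int C(|h|/\theta)\leq 1$, so $\|h\|_C\leq\theta$. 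This part is routine.

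\textbf{Part (1) ($M$ spaces).} Use the quasi-norm side. By Lemma \ref{lem2p22},
$$\sup_t \decra(t,h)\varphi(t)=\sup_\lambda \lambda\,\varphi(d_h(\lambda)).$$
Bound $d_f(\lambda/|g|)\leq\varphi_A^{-1}$-type quantities using $q_{\varphi_A}(f)\leq\|f\|_{M_A}$. Then integrate against $g$ using the dual pairing of $L^B$ with $L^{B^*}$, whose fundamental function is $t/\varphi_B(t)$. To pass from the weak quasi-norm back to the $M_C$ norm, one would use the standard equivalence of $M_\varphi$ with $M^*_\varphi$ for $\varphi_C$ with nontrivial dilation indices. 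Failing that, apply the argument to $\frac1t\int_0^t\decra(s,h)\,ds$ directly, via the maximal-function representation
$$\frac{\varphi(t)}{t}\int_0^t\decra(s,h)\,ds=\sup_{\nu(E)\leq t}\frac{\varphi(t)}{t}\int_E|h|.$$
Then estimate $\int_E|f\otimes g|$ by slicing $E$ in $y$. Each slice $E_y$ contributes at most $\|f\|_{M_A}\,\nu(E_y)/\varphi_A(\nu(E_y))$, and the resulting $y$-integral is handled by Hölder in $L^B$ together with submultiplicativity. The constant $3$ should arise from splitting $E$ according to whether $\nu(E_y)$ is large or small relative to $t$.

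\textbf{Part (3) ($\Lambda$ spaces).} Use the atomic/dual description. Since $\Lambda_\varphi$ is the smallest space with fundamental function $\varphi$, it suffices to check the estimate on simple functions, by Fatou. For $f=\chi_{E}$ and $g=\chi_{F}$ we have $\|h\|_{\Lambda_C}=\varphi_C(\nu(E)\nu(F))\leq\theta\varphi_A\varphi_B$. For general $f$ and $g$, decompose dyadically into level sets, $f\approx\sum 2^k\chi_{E_k}$, and sum the pieces. The factor $2e^3$ should come from the dyadic decomposition loss in each variable and from passing between $\varphi$ and its least concave majorant.

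\textbf{Main obstacle.} I expect the main obstacle to be part (3). The dyadic summation is not orthogonal, and controlling $\sum_{j,k}2^{j+k}\varphi_C(\nu(E_j)\nu(F_k))$ by a product of two $\Lambda$ norms requires care with the ordering of level sets. I would handle this by using the concave majorant and an Abel summation argument.
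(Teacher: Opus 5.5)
The paper gives no proof of this statement; it is quoted from O'Neil, so your proposal has to stand on its own. Part (2) is the standard And\^o argument and is correct. Part (1), however, has a genuine gap exactly where its content lies.

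Your first route does not work as stated. The weak estimate $\sup_\lambda\lambda\,\varphi_C(d_h(\lambda))\le\theta\|f\|_{M_A}\|g\|_B$ can indeed be obtained from $d_h(\lambda)=\int_Y d_f(\lambda/|g(y)|)\,d\nu_Y$. But upgrading from $M^*_C$ to $M_C$ needs index assumptions that the hypothesis does not supply: for $A=B=C=\mathrm{id}$ and $\theta=1$ one has $M_C=L^1$ but $M^*_C=L^{1,\infty}$.

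In your fallback route the slicing is right. For $\nu(E)\le t$ and $k(y)=\nu_X(E_y)$ you get $\int_E|h|\le\|f\|_{M_A}\int_Y|g(y)|\,k(y)A^{-1}(1/k(y))\,d\nu_Y(y)$ with $\int_Y k\le t$. The bound of this integral by a multiple of $\theta\,t/\varphi_C(t)$, however, is only asserted. H\"older in $L^B$ merely restates the problem as a bound on $\|kA^{-1}(1/k)\|_{L^{B^*}}$.

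The natural way to carry out your splitting is to decompose $k$ into level sets and sum the rectangle estimate $\varphi_C(st)\le\theta\varphi_A(s)\varphi_B(t)$. That argument uses only the fundamental function of the space containing $g$. It would therefore equally prove (1) for $g\in M_B$, which is false. For $A=B=C=u^p$ with $p>1$, take $f(x)=x^{-1/p}$ and $g(y)=y^{-1/p}$ on $(0,1)$; both lie in $M_B$, which here is $L^{p,\infty}$. Yet $d_{f\otimes g}(\lambda)=\lambda^{-p}(1+p\log\lambda)$ for $\lambda>1$, so $f\otimes g\notin L^{p,\infty}$. The modular of $B$ has to enter the argument.

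The missing idea is to apply the hypothesis pointwise against that modular and then use Jensen. Normalise so that $\int_Y B(|g|)\,d\nu_Y\le 1$, and put $K=\int_Y k\,d\nu_Y\le t$. Taking $u=1/k(y)$ and $v=B(|g(y)|)$ gives $|g(y)|A^{-1}(1/k(y))\le\theta\,C^{-1}(B(|g(y)|)/k(y))$. Jensen's inequality for the concave function $C^{-1}$ and the probability measure $k\,d\nu_Y/K$ then yields
\[\int_Y|g|\,k\,A^{-1}(1/k)\,d\nu_Y\le\theta K\,C^{-1}\Bigl(\frac1K\int_Y B(|g|)\,d\nu_Y\Bigr)\le\theta K\,C^{-1}(1/K)\le\theta\,t\,C^{-1}(1/t)=\frac{\theta\,t}{\varphi_C(t)},\]
because $s\mapsto sC^{-1}(1/s)$ is non-decreasing. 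This proves (1), even with constant $\theta$. On spaces with atoms, replace sets $E$ by weights $0\le w\le 1$ with $\int w\le t$, since $\int_0^t\mu(s,h)\,ds=\sup_{\nu(E)\le t}\int_E|h|$ can then fail.

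For (3), by contrast, the obstacle you anticipate is not real. Use nested superlevel sets $E_j=\{|f|>2^j\}$ and $F_k=\{|g|>2^k\}$ rather than disjoint shells. Then $|h|\le\sum_{j,k}2^{j+k}\chi_{E_j\times F_k}$ and $\sum_j 2^j\varphi_A(\nu_X(E_j))\le 2\int_0^\infty\varphi_A(d_f(\lambda))\,d\lambda=2\|f\|_{\Lambda_A}$. The plain triangle inequality together with your rectangle estimate then gives the constant $4\theta$. The continuous layer-cake formula with Minkowski's integral inequality gives $\theta$. Either constant may double when passing to least concave majorants. No orthogonality or Abel summation is needed.
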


An application of the ideas presented in the discussion preceding Lemma \ref{L2.6} to the above theorem now leads to the following corollary.

\begin{corollary} Let $\M$ and $G$ be as before with $g\in L^{1+\infty}(\M,\tau)$ and $f\in L^{1+\infty}(G)$ be given and set 
$h=f\otimes g$. Let $A$, $B$, $C$ be Young functions for which there is some $\theta>0$ satisfying for which $A^{-1}(u)B^{-1}(v)\leq \theta C^{-1}(uv)$ for all $u,v\geq 0$  
Then 
\begin{enumerate} 
\item $f\in M_A(\M,\tau), g\in L^B(G) \Rightarrow h\in M_C(L^\infty(G)\overline{\otimes}\M)$ with $\|h\|_{M_C}\leq 3\theta\|f\|_{M_A}\|g\|_{L_B}$.
\item $f\in L_A(\M,\tau), g\in L^B(G) \Rightarrow h\in L_C(L^\infty(G)\overline{\otimes}\M)$ with $\|h\|_{L_C}\leq \theta\|f\|_{L_A}\|g\|_{L_B}$.
\item $f\in \Lambda_A(\M,\tau), g\in \Lambda_B(G) \Rightarrow h\in \Lambda_C(L^\infty(G)\overline{\otimes}\M)$ with 
$\|h\|_{\Lambda_C}\leq 2e^3\theta\|f\|_{\Lambda_A}\|g\|_{\Lambda_B}$.
\end{enumerate}
\end{corollary}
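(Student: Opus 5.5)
The plan is to reduce each of the three claims to the corresponding part of Theorem \ref{8.15}. The key point is that every norm involved ($M_C$, $L^C$, $\Lambda_C$) depends only on the decreasing rearrangement, equivalently on the distribution function. So it suffices to find a commutative pair $(F,H)$ on $(\mathbb{R}_+,m)\times(G,\nu)$ that has the same distribution as $h=f\otimes g$ and whose factors have the same norms as $f$ and $g$. (As the statement is written, the roles of the letters are mixed: $f$ is the element of $\M$ and $g$ the function on $G$ in the hypotheses. I treat $f\in L^{1+\infty}(\M)$ and $g\in L^{1+\infty}(G)$ accordingly; the argument is symmetric.)

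First I would compute the pointwise structure of $h$. By \S\ref{vNA_MG}, $h$ is the operator-valued function $x\mapsto g(x)f$ on $G$, with $|h|(x)=|g(x)|\,|f|$. I would then use the two-variable rearrangement function $h_1(s,x)=\decra(s,h(x))$ from the discussion preceding Lemma \ref{L2.6}, which is measurable by the proposition there, and note that $\decra(s,g(x)f)=|g(x)|\decra(s,f)$. Hence $h_1=\decra(\cdot,f)\otimes |g|$ is a genuine product function on $\mathbb{R}_+\times G$.

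Next I would use the proposition asserting $d_h(\theta)=d_{\decra(\cdot,h(\cdot))}(\theta)$ for every $\theta$. This identifies the distribution of $h$, with respect to $\int_G\otimes\tau$, with the distribution of $h_1$, with respect to $m\times\nu$. The decreasing rearrangements therefore agree, so $\|h\|_{\mathcal{X}}=\|h_1\|_{\mathcal{X}(\mathbb{R}_+\times G)}$ for each $\mathcal{X}\in\{M_C,L^C,\Lambda_C\}$. The same reasoning gives the factor identities directly from the definitions of noncommutative r.i.\ norms: $\|\decra(\cdot,f)\|_{M_A(\mathbb{R}_+)}=\|f\|_{M_A(\M)}$, and likewise for $L^A$ and $\Lambda_A$, since $\decra(\cdot,f)$ is its own rearrangement. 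If $\mathcal{X}$ is given by a Luxemburg norm, I would check this via $\tau(A(|f|/\lambda))=\int_0^\infty A(\decra(s,f)/\lambda)\,ds$, the same computation used in the proof of Theorem \ref{Mal-nc}.

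Finally I would apply parts (1)--(3) of Theorem \ref{8.15} to $F=\decra(\cdot,f)$ on $(\mathbb{R}_+,m)$ and $H=|g|$ on $(G,\nu)$, and transport the resulting bounds back through the identities above. The step needing most care is the justification that $\decra(s,h(x))=|g(x)|\decra(s,f)$ holds almost everywhere, and that the distribution proposition applies to $h$. The latter requires $h\in\widetilde{L^\infty(G)\overline{\otimes}\M}$. When $f\in L^{1+\infty}(\M)$ and $g\in L^{1\cap\infty}(G)$ this follows from the discussion before Lemma \ref{2.4}. For general $g$, where $h$ may fail to be $\tau$-measurable, I would approximate: replace $g$ by truncations $g_n=g\chi_{K_n}\chi_{\{|g|\le n\}}$, with $K_n$ compact increasing to $G$. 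The bounds for $g_n$ then pass to the limit by the Fatou property of the three norms, since $|h_n|\nearrow|h|$. The rearrangement identity itself follows from the spectral identity $\chi_{(\lambda,\infty)}(|g(x)f|)=\chi_{(\lambda/|g(x)|,\infty)}(|f|)$ already used in Lemma \ref{2.4}.
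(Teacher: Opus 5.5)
Your proposal is correct and takes the same route as the paper. The paper's proof is a one-line remark that the corollary follows by applying the two-variable rearrangement $(s,x)\mapsto\decra(s,h(x))$ from the discussion preceding Lemma \ref{L2.6} to O'Neil's Theorem \ref{8.15}; you supply the details it omits, namely the product form $\decra(s,h(x))=|g(x)|\decra(s,f)$, the distribution identity $d_h=d_{\decra(\cdot,h(\cdot))}$ used to transfer the rearrangement-invariant norms, and the truncation argument for $\tau$-measurability of $h$.
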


\begin{remark} 
Showing that \cite[Theorem 8.15]{ONeil} extends to the present context enables us to provide criteria for showing that for example $f\in L^A(\M,\tau)$ implies that 
$\chi_K\otimes f\in L^C(L^\infty(G)\overline{\otimes}\M)$ for each compact $K\subset G$. To show this we may use the theory of double decreasing rearrangements for von 
Neumann algebras of the form $L^\infty(G)\overline{\otimes}\M$. 
\end{remark}

If \cite[Lemmata 2.4 \& 4.6]{dBL} are replaced by Lemmata \ref{2.4} and \ref{4.6c}, then for the proofs of Theorems 4.1 and 4.3 and Lemma 4.2 of \cite{L-QM} to go through in 
the present context, all that is needed is the version of \cite[Theorem 8.15]{ONeil} mentioned above. Leaving the proofs as an exercise, we therefore merely cite the quantum 
versions of these results.

\begin{theorem}\label{Ttildecont} Let $\M$ be a semifinite von Neumann algebra equipped with an fns trace and admitting a group action of some locally compact group $G$ 
and $T$ a transferable operator from $L^{loc}(G)$ to $C(G)$. Suppose that $B(u)C(u)\leq A(\theta uv)$ for all $u,v\geq 0$. 
Then for any $f\in L^{1+\infty}(\M,\tau)$ the following holds: 
\begin{enumerate}
\item $\widetilde{T}(\chi_{KU^{-1}}\otimes_\alpha f)\in L^A(L^\infty(G)\overline{\otimes}\M,\int_G\otimes\tau)\ \Rightarrow\ 
\varphi_C(|K|)\|T^\#(f)\|_B\leq\theta\|\widetilde{T}(\chi_{KU^{-1}}\otimes_\alpha f)\|_A$;
\item $\widetilde{T}(\chi_{KU^{-1}}\otimes_\alpha f)\in M_A(L^\infty(G)\overline{\otimes}\M,\int_G\otimes\tau)\ \Rightarrow\ 
\varphi_C(|K|)\|T^\#(f)\|_{M_B}\leq\theta\|\widetilde{T}(\chi_{KU^{-1}}\otimes_\alpha f)\|_{M_A}$;
\end{enumerate}
\end{theorem}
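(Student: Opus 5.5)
The plan is to reduce both claims to a comparison between $F:=\widetilde{T}(\chi_{KU^{-1}}\otimes_\alpha f)$ and the simple tensor $h:=\chi_K\otimes T^\#(f)$, and then evaluate the relevant norm of $h$ directly. Write $g=T^\#(f)$.

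\emph{Step 1: rearrangement of $h$.} First compute $\decra(t,h)=\decra(t/|K|,g)$ for all $t\geq 0$. Arguing as in the proof of Lemma \ref{2.4}, one has $\chi_{(\lambda,\infty)}(|h|)(s)=\chi_K(s)\chi_{(\lambda,\infty)}(|g|)$. Hence $d_h(\lambda)=(\int_G\otimes\tau)(\chi_{(\lambda,\infty)}(|h|))=|K|\,d_g(\lambda)$, and taking the generalised inverse gives the formula.

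\emph{Step 2: domination.} Corollary \ref{4.6c} gives $\decra(t,F)\geq \decra(t/|K|,g)=\decra(t,h)$ for all $t$. Since the Luxemburg norm of $L^A$ and the norm of $M_A$ are both monotone functionals of the decreasing rearrangement (noncommutative Luxemburg representation, as in \cite{DDdP}), it follows that $\|h\|_A\leq \|F\|_A$ and $\|h\|_{M_A}\leq\|F\|_{M_A}$. It therefore suffices to prove the two inequalities
\[
\varphi_C(|K|)\|g\|_B\leq \theta\|h\|_A \quad\text{and}\quad \varphi_C(|K|)\|g\|_{M_B}\leq\theta\|h\|_{M_A}.
\]

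\emph{Step 3: fundamental function inequality.} Here I read the hypothesis as $B(u)C(v)\leq A(\theta uv)$. Put $u=B^{-1}(x)$ and $v=C^{-1}(y)$. Then $A(\theta uv)\geq B(u)C(v)$, so $\theta B^{-1}(x)C^{-1}(y)\geq A^{-1}(xy)$. By (\ref{fundlux}) this yields
\[
\varphi_A(|K|r)\geq \theta^{-1}\varphi_B(r)\varphi_C(|K|)\qquad (r>0).
\]
For claim (2) this is enough. Indeed,
\[
\|h\|_{M_A}=\sup_t\frac{\varphi_A(t)}{t}\int_0^t\decra(s/|K|,g)\,ds=\sup_r\frac{\varphi_A(|K|r)}{r}\int_0^r\decra(\sigma,g)\,d\sigma\geq\theta^{-1}\varphi_C(|K|)\|g\|_{M_B},
\]
using the substitution $t=|K|r$, $s=|K|\sigma$.

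\emph{Step 4: the Luxemburg case.} For claim (1), set $\lambda=\|h\|_A$. By Step 1, $|K|\int_0^\infty A(\decra(s,g)/\lambda)\,ds\leq 1$. Take $v=C^{-1}(1/|K|)$, so that $\varphi_C(|K|)=1/v$. Applying the hypothesis pointwise with $u=\decra(s,g)/(\theta\lambda v)$ gives $B(u)\leq A(\decra(s,g)/\lambda)/C(v)$. Integrating,
\[
\tau\bigl(B(|g|/(\theta\lambda v))\bigr)\leq \frac{1}{|K|\,C(v)}.
\]
If $C(v)=1/|K|$, this is at most $1$, and then $\|g\|_B\leq\theta\lambda v=\theta\|h\|_A/\varphi_C(|K|)$, which is the claim.

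\emph{Main obstacle.} The step I expect to be delicate is the identity $C(C^{-1}(y))=y$ in Step 4. It holds when $C$ is continuous and finite at the relevant point. If $C$ jumps to $\infty$ at $b_C$, I would perturb: take $v'<v$ with $C(v')\leq 1/|K|$ and pass to the limit $v'\nearrow v$. Alternatively, one can derive (1) from the Orlicz embedding of Theorem \ref{Mal-nc}(1) with $(X,Y,Z)=(C,B,A)$. That gives $\|h\|_{L^C[L^B]}=\|\chi_K\|_C\|g\|_B=\varphi_C(|K|)\|g\|_B$, but one then has to track the embedding constant and check that it equals $\theta$.
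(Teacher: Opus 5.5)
Your overall route is the one the paper points to (it leaves the proof as an exercise, adapting \cite[Theorem 4.1]{L-QM} via Lemma \ref{2.4} and Corollary \ref{4.6c}). Steps 1 and 2 are correct, and so is the $M_A$ computation in Step 3. The gap is in how you extract information from $B(u)C(v)\leq A(\theta uv)$ at the points $v=C^{-1}(1/|K|)$ and $u=B^{-1}(x)$.

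\textbf{Step 4.} Your argument proves (1) only when $C(C^{-1}(1/|K|))=1/|K|$. If $b_C<\infty$ and $C(b_C)<1/|K|$, then $v=b_C$ and your bound $1/(|K|C(v))$ exceeds $1$. If $C(b_C)=0$, as for an $L^\infty$-type $C$, you are dividing by $0$. Your proposed repair points the wrong way: for $v'<v$ one has $C(v')\leq C(v)<1/|K|$, so the bound stays above $1$ and no limit $v'\nearrow v$ rescues it.

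\textbf{Step 3.} The one-line derivation has the same defect. Concluding $\theta uv\geq A^{-1}(xy)$ from $A(\theta uv)\geq B(u)C(v)$ requires $B(B^{-1}(x))\,C(C^{-1}(y))\geq xy$, which fails in exactly these cases.

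Both are repaired by perturbing \emph{upwards}. A Young function is non-decreasing and left-continuous, so $\{s\colon C(s)\leq y\}=[0,C^{-1}(y)]$. Hence $C(v')>y$ for every $v'>C^{-1}(y)$ (possibly $C(v')=\infty$), and likewise for $B$.

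For Step 3, take $x,y>0$, $u'>B^{-1}(x)$ and $v'>C^{-1}(y)$. Then $A(\theta u'v')\geq B(u')C(v')>xy$, so $\theta u'v'>A^{-1}(xy)$. Letting $u'\searrow B^{-1}(x)$ and $v'\searrow C^{-1}(y)$ gives $A^{-1}(xy)\leq\theta B^{-1}(x)C^{-1}(y)$, which completes your proof of (2).

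For Step 4, take $v'>v=C^{-1}(1/|K|)$. The hypothesis gives $B(u)\leq |K|A(\theta uv')$ for all $u\geq 0$. This is trivial if $B(u)=0$; when $C(v')=\infty$ it says $B(u)=0$ wherever $A(\theta uv')<\infty$. So with $\lambda=\|h\|_A$,
\[
\tau\bigl(B(|g|/(\theta\lambda v'))\bigr)\leq |K|\int_0^\infty A\bigl(\decra(s,g)/\lambda\bigr)\,ds\leq 1 .
\]
Hence $\|g\|_B\leq\theta\lambda v'$, and letting $v'\searrow v$ yields $\varphi_C(|K|)\|g\|_B\leq\theta\|h\|_A$ with no case distinction.

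Two smaller points. First, Corollary \ref{4.6c} is stated for $\chi_{U^{-1}K}$ while the theorem uses $\chi_{KU^{-1}}$. This inconsistency is inherited from the paper and is harmless for abelian $G$, but you should say which set you mean. Second, the fallback through Theorem \ref{Mal-nc} would not deliver the constant $\theta$, since that theorem carries no explicit embedding constant.
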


\begin{lemma} Let $\M$ be as before $G$. Suppose that $\theta\varphi_X(s)\varphi_Y(t)\geq \varphi_Z(st)$ for all $t,s>0$. For any $f\in L^Y(\M,\tau)$ and any compact 
$K\subset G$, we have that $\chi_{KU^{-1}}\otimes_\alpha f\in L^Z(L^\infty(G)\overline{\otimes}\M,\int_G\otimes\tau)$ with 
$\|\chi_{KU^{-1}}\otimes_\alpha f\|_Z\leq \theta\varphi_X(|KU^{-1}|).\|f\|_Y$
\end{lemma}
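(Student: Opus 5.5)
The plan is to strip away the twisting, reduce to a computation with distribution functions of a simple tensor, and then translate that computation into an Orlicz norm via the fundamental function condition. First I will invoke Lemma \ref{2.4} (equivalently the corollary stated at the start of the previous section). With $h=\chi_{KU^{-1}}\in L^{1\cap\infty}(G)$, it gives that $\chi_{KU^{-1}}\otimes_\alpha f$ and $\chi_{KU^{-1}}\otimes f$ are equimeasurable. Hence $\|\chi_{KU^{-1}}\otimes_\alpha f\|_Z=\|\chi_{KU^{-1}}\otimes f\|_Z$, and it suffices to prove the estimate for the untwisted tensor.

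Next I will compute the distribution function of $h=\chi_{KU^{-1}}\otimes f$. Using the pointwise description of spectral projections (Proposition \ref{ptwise-spectral}), I get $\chi_{(\lambda,\infty)}(|h|)(s)=\chi_{KU^{-1}}(s)\chi_{(\lambda,\infty)}(|f|)$. Integrating against $\int_G\otimes\tau$ then gives
$$d_h(\lambda)=|KU^{-1}|\,d_f(\lambda), \qquad\text{so}\qquad \decra(t,h)=\decra(t/|KU^{-1}|,f).$$
Set $c=|KU^{-1}|$. The $\tau$-measurable operator $h$ is therefore just a dilation of $f$ in the rearrangement variable, and the problem becomes purely one-dimensional: estimate the $L^Z(\mathbb{R}_+)$ norm of $t\mapsto \decra(t/c,f)$ in terms of the $L^Y(\mathbb{R}_+)$ norm of $\decra(\cdot,f)$.

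For this step I intend to use Theorem \ref{Mal-nc}(2) with $\Gamma=G$. It gives $\|h\|_Z\lesssim\|h\|_{L^X(G)[L^Y(\M)]}$, and computing directly, $\|h\|_{L^X(G)[L^Y(\M)]}=\|\chi_{KU^{-1}}\|_X\|f\|_Y=\varphi_X(c)\|f\|_Y$, which is exactly the required bound. The difficulty is that Theorem \ref{Mal-nc} is phrased via a Young function inequality $Z(\theta uv)\le X(u)Y(v)$, whereas the hypothesis here concerns fundamental functions. By \eqref{fundlux}, the hypothesis reads $\theta A^{-1}$-type inequalities $X^{-1}(1/s)Y^{-1}(1/t)\le\theta Z^{-1}(1/(st))$, i.e.\ the O'Neil condition of Theorem \ref{8.15}. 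I therefore plan instead to apply the noncommutative O'Neil corollary, part (2) (And\^o's estimate), to $\chi_{KU^{-1}}\in L^X(G)$ and $f\in L^Y(\M)$, which yields $\|h\|_Z\le\theta\|\chi_{KU^{-1}}\|_X\|f\|_Y=\theta\varphi_X(c)\|f\|_Y$.

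The main obstacle is this matching of hypotheses. The O'Neil corollary requires the inverse-function inequality for all $u,v\ge0$, and by \eqref{fundlux} that is precisely $\varphi_Z(st)\le\theta\varphi_X(s)\varphi_Y(t)$ after substituting $u=1/s$, $v=1/t$; I would check this translation carefully, including the endpoints and the right-continuity of inverses. If that fails, the fallback is direct: because $h$ is a simple-type tensor, one can instead bound $\int Z(\decra(t/c,f)/\lambda)\,dt=c\,\tau(Z(|f|/\lambda))$, and choose $\lambda=\theta\varphi_X(c)\|f\|_Y$ using convexity and the fundamental function inequality to compare $cZ(u/(\theta\varphi_X(c)))$ with $Y(u)$.
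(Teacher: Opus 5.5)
Your argument is correct and follows the route the paper indicates (Lemma \ref{2.4} to remove the twist, then the noncommutative O'Neil/And\^o corollary, part (2), with $\|\chi_{KU^{-1}}\|_X=\varphi_X(|KU^{-1}|)$). Your translation of the hypothesis via \eqref{fundlux} (with $u=1/s$, $v=1/t$, and right-continuity of the inverses at $0$) is exactly right, and your fallback is also sound and self-contained, which is worth noting since the paper states that corollary without proof: from $\decra(t,h)=\decra(t/c,f)$ and the scalar inequality $cZ(u/(\theta\varphi_X(c)))\le Y(u)$ (take $b=Y(u)$ in $Y^{-1}(b)\le\theta\varphi_X(c)Z^{-1}(b/c)$), you get $(\int_G\otimes\tau)(Z(|h|/(\theta\varphi_X(c)\|f\|_Y)))\le\tau(Y(|f|/\|f\|_Y))\le 1$ directly.
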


\begin{theorem}\label{Thashcont} Let $\M$ and $T$ be as before. Suppose that $B(u)C(v)\leq A(\theta uv)$ for all $u,v\geq 0$, and that 
$\theta_2\varphi_X(s)\varphi_Y(t)\geq \varphi_Z(st)$ for all $t,s>0$. If $\varphi_X\leq \theta_3\varphi_C$, then the following holds:
\begin{enumerate}
\item If $\widetilde{T}$ boundedly maps $L^Z(L^\infty(G)\overline{\otimes}\M,\int_G\otimes\tau)$ into $L^A(L^\infty(G)\overline{\otimes}\M,\int_G\otimes\tau)$, then $T^\#$ 
boundedly maps $L^Y(\M,\tau)$ into $L^B(\M,\tau)$ with $\|T^\#\|\leq \theta_1\theta_2\theta_3\|\widetilde{T}\|$.
\item If $\widetilde{T}$ boundedly maps $L^Z(L^\infty(G)\overline{\otimes}\M,\int_G\otimes\tau)$ into $M_A(L^\infty(G)\overline{\otimes}\M,\int_G\otimes\tau)$, then $T^\#$ 
boundedly maps $L^Y(\M,\tau)$ into $M_B(\M,\tau)$ with $\|T^\#\|\leq \theta_1\theta_2\theta_3\|\widetilde{T}\|$.
\end{enumerate}
\end{theorem}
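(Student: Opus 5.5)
The plan is to chain the three preceding results: the Lemma on $\chi_{KU^{-1}}\otimes_\alpha f$, the boundedness of $\widetilde{T}$, and Theorem \ref{Ttildecont}. Here $\theta_1$ is the constant $\theta$ in $B(u)C(v)\leq A(\theta uv)$. Fix a compact neighbourhood $K$ of $e\in G$ (any one will do, say with $|K|>0$), and let $U$ be the neighbourhood supplied by semilocality. By Proposition \ref{Thashlim} and Definition \ref{Thashdef}, enlarging $K$ if necessary, $T^\#(f)$ may be computed from $\chi_{KU^{-1}}\otimes_\alpha f$. Lemma \ref{4.6a} and Corollary \ref{4.6c} then relate $T^\#(f)$ to $\widetilde{T}(\chi_{KU^{-1}}\otimes_\alpha f)$.

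For (1), let $f\in L^Y(\M,\tau)$. The preceding Lemma, applied with $\theta_2$, gives $\chi_{KU^{-1}}\otimes_\alpha f\in L^Z(L^\infty(G)\overline{\otimes}\M)$ with
$$\|\chi_{KU^{-1}}\otimes_\alpha f\|_Z\leq \theta_2\varphi_X(|KU^{-1}|)\|f\|_Y.$$
The hypothesis on $\widetilde{T}$ then places $\widetilde{T}(\chi_{KU^{-1}}\otimes_\alpha f)$ in $L^A$ with norm at most $\|\widetilde{T}\|\theta_2\varphi_X(|KU^{-1}|)\|f\|_Y$. Theorem \ref{Ttildecont}(1), applied with $\theta_1$, yields
$$\varphi_C(|K|)\|T^\#(f)\|_B\leq \theta_1\theta_2\|\widetilde{T}\|\varphi_X(|KU^{-1}|)\|f\|_Y.$$
It remains to divide by $\varphi_C(|K|)$ and use $\varphi_X\leq\theta_3\varphi_C$ to bound the ratio of fundamental functions.

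Part (2) is handled identically, using Theorem \ref{Ttildecont}(2) and the $M_A$, $M_B$ norms in place of $L^A$, $L^B$.

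The main obstacle is the mismatch between $|KU^{-1}|$ and $|K|$. The bound $\varphi_X\leq\theta_3\varphi_C$ only controls $\varphi_X(|KU^{-1}|)/\varphi_C(|KU^{-1}|)$, not $\varphi_X(|KU^{-1}|)/\varphi_C(|K|)$. I would handle this as in \cite{dBL,L-QM}, by letting $K$ run through a Følner-type family for which $|KU^{-1}|/|K|\to 1$. Then quasi-concavity of $\varphi_C$, namely that $\varphi_C$ is nondecreasing and $\varphi_C(t)/t$ is nonincreasing, gives
$$\varphi_C(|KU^{-1}|)\leq \frac{|KU^{-1}|}{|K|}\varphi_C(|K|),$$
and the ratio tends to $1$ in the limit. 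If such a family is unavailable for the group at hand, I would absorb the fixed factor $|KU^{-1}|/|K|$ into the constant. This gives boundedness with a slightly worse constant, and it recovers the stated estimate whenever that factor can be made arbitrarily close to $1$.
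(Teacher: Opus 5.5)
Your argument is the one the paper intends. The paper leaves the proof as an exercise modelled on \cite[Theorem 4.3]{L-QM}, and that is exactly your chain: the preceding Lemma (constant $\theta_2$), then the boundedness of $\widetilde{T}$, then Theorem \ref{Ttildecont} (constant $\theta_1$).

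The obstacle you flag is genuine, and nothing in the paper removes it. The hypothesis $\varphi_X\leq\theta_3\varphi_C$, combined with quasi-concavity of $\varphi_C$, only yields $\|T^\#\|\leq\theta_1\theta_2\theta_3\,\tfrac{|KU^{-1}|}{|K|}\,\|\widetilde{T}\|$. The stated constant therefore needs a F{\o}lner-type hypothesis on $G$. That hypothesis is automatic for the abelian groups of \cite{dBL}, but the statement omits it. Your formulation, namely boundedness in general and the sharp constant under such a hypothesis, is the right one.

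One refinement: run the F{\o}lner step with the set $U^{-1}K$ used in Proposition \ref{dBL_4.5} and Corollary \ref{4.6c}, not with $KU^{-1}$. Right translation rescales left Haar measure by the modular function. Consequently, on a connected amenable but non-unimodular group such as the $ax+b$ group, $|KU^{-1}|/|K|$ is bounded below by a constant $>1$ depending only on $U$, whatever $K$ you choose. With $U^{-1}K$, left F{\o}lner sets of an amenable group do make the ratio tend to $1$.
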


\part{Ergodic convergence theorems}
\section{Noncommutative maximal operators}\label{maxop}

For this final part of the paper we need a clear concept of Hardy-Littlewood style maximal operators conditioned to the non commutative setting. Since for a sequence $(f_n)$ 
in some monotone Riesz-Fischer space $L^{\bm{\rho}}(\M,\tau)$ we cannot in the absence of monotonicity of the sequence simplistically make sense of $\sup_n|f_n|$ in the 
noncommutative setting, developing a sensible theory of maximal operators in this context is far from trivial. We shall briefly sample the known theory on this point before 
offering a few suggestions of our own on this matter.

We first look at the spaces used to prove strong type maximal inequalities in the noncommutative context. Let $\N$ be a von Neumann algebra equipped with a normal semifinite 
faithful trace $\tau_{\N}$. For $0 < p \leq \infty$, the space $L^p(\N; \ell^\infty)$ consists of all sequences of operators $x = (x_n)_{n>0}$ for which there exist operators 
$a, b \in L^{2p}(\N)$ and contractions $(u_n)_{n>0}$ in $\N$ such that $$x_n = au_nb\mbox{ for all }n.$$We set $$\|x\|_{L^{p}(\N;\ell^\infty)} = \inf \|a\|_{2p}\|b\|_{2p}$$ 
where the infimum runs over all decompositions as above. Then $L^{p}(\N;\ell^\infty)$ is a Banach space for $p \geq 1$ and a quasi-Banach for $p < 1$. When $x$ consists of 
positive operators, we have a simpler description namely $$\|x\|_{L^p(\N;\ell^\infty)} = \inf\{\|a\|_p\colon 0\leq x_n\leq a\mbox{ for all } n\geq 0\}.$$This explains why 
$\|x\|_{L^p(\N;\ell^\infty)}$ plays the role of the norm of the maximal function. It is proved in \cite{JQ} that this infimum is achieved but it may depend on $p$ if $x$ 
belongs to $L^p(\N;\ell^\infty)$ for multiple values of $p$.

For weak type inequalities there is another space that is used. The space $\Lambda_{p,\infty}(\N; \ell^\infty)$ consists of all sequences of operators $x = (x_n)_{n>0}$ for 
which the quantity $$\|x\|_{\Lambda_{p,\infty}(\N; \ell^\infty)} = \sup_{\lambda>0}\inf\{\tau(1 - e)^{1/p}\colon \|ex_ne\|\leq \lambda\mbox{ for all }n\geq 0\}$$is finite 
where the infimum runs over all self-adjoint projections in $\N$. Thus, if $x \Lambda_{p,\infty}(\N; \ell^\infty)$ with $\|x\|_{\Lambda_{p,\infty}(\N; \ell^\infty)} < C$ for 
all $\lambda > 0$, then there is a projection $e \in \N$ with $$\tau(1 - e) \leq \left(\frac{C}{\lambda}\right)^p, \quad\|ex_ne\|\mbox{ for all }n\geq 0.$$The space is a 
quasi-Banach space. In section 4 of their paper Cadilhac and Ricard observe that this space may variously be described by as 
$$\Lambda_{p,\infty}(\N; \ell^\infty) = \{(x_n) \in L^p(\N)^{\mathbb{N}}\colon\|(x_n)\|_{\Lambda_{p,\infty}} = \sup_{t>0}t^{1/p}\decra((x_n))(t) < \infty\}$$where here 
$\decra((x_n))(t)$ is defined as $\decra((x_n))(t) = \inf_{\tau(1-e)\leq t}\sup_{n\geq 0} \|ex_ne\|$. (See \cite[Definition 4.1]{CR} and the discussion fgollowing it.) Taking 
into account that the fundamental function of $L^p$ is $\varphi_p(t)=t^{1/p}$, this space is therefore clearly related to the space $$M^*_{\varphi_A}(\widetilde{\N}),\tau)= 
\{x\in \widetilde{\N}\colon\|x\|_{M^*_{\varphi_A}} = \sup_{t>0}\varphi_A(t)\decra(t,x) < \infty\}$$. 

\medskip

In the classical theory the space which behaves nicely with regard to mixed norm spaces is 
$$M_{\varphi_A}(\Omega) = \{f\in M_0(\Omega)\colon\|f\|_{M_{\varphi_A}} = \sup_{t>0}\frac{\varphi_A(t)}{t}\int_0^t f^*(s)\,ds < \infty\}$$ not $M^*_{\varphi_A}(\N)$. The 
quantum analog of this spaces is of course just
$$M_{\varphi_A}(\N,\tau) = \{f\in \widetilde{\N}\colon\|f\|_{M_{\varphi_A}} = \sup_{t>0}\frac{\varphi_A(t)}{t}\int_0^t \decra(s,f)\,ds < \infty\}.$$
What we need to adequately describe weak type is a quantum space $M_{\varphi_A}(\N;\ell^\infty)$ which is constructed using the same general philosophy above, but mimicking 
the structure of the space $M_{\varphi_A}(\Omega)$. It is not clear if the approach of Cadilhac and Ricard will suffice to achieve this end, but their idea of constructing 
these spaces by using some sort of decreasing rearrangement of elements of $\tM^{\mathbb{N}}$ is a useful step forward. Using this idea we choose to rather define decreasing 
rearragements for elements $(x_n)$ of $\tM^{\mathbb{N}}$ by the prescription $\decra(t,(x_n))=\sup_n\decra(t,x_n)$ for each $t\geq 0$. Using these decreasing rearrangements 
we can then define sequential analogs of any of the noncommutative monotone Riesz-Fischer spaces discussed in \cite[\S 7.2]{GLbook}. Specifically for any such monotone 
Riesz-Fischer norm $\bm{\rho}$ we can then define the space $L^{\bm{\rho}}(\M,\ell^\infty)$ to be $L^{\bm{\rho}}(\M,\ell^\infty) = 
\{(x_n) \in\tM^{\mathbb{N}}:\decra(\cdot,(x_n))\in L^{\bm{\rho}}(\mathbb{R}_+)\}$. The function $t\mapsto \decra(t,(x_n))$ inherits many of the basic properties of decreasing 
rearrangements from those for elements of $\tM$. It is however not clear to us if $\|(x_n)\|_{\bm{\rho}} = \bm{\rho}(\decra(\cdot,(x_n)))$ will still be a norm. This is not a 
major obstacle, since the space will still be a well-behaved quasi-normed space even if this aspect fails. This can be seen by noting that the following proposition may be 
derived from \cite[Proposition 5.12]{GLbook}. (Properties (i) and (iii) require a bit of work; the rest are straightforward. We provide salient details for (i) and (iii).)

\begin{proposition}\label{P5.FK2.5} Let $(f_n)$, $(a_n)$ and $(b_n)$ be sequences of $\tau$-measurable operators.
\begin{enumerate}
\item[(i)] The map $\decra(\cdot,(f_n)):\mathbb{R}_+\to [0,\infty]:t\mapsto\decra(t,(f_n))$ is non-increasing and right-continuous. Moreover  
$\lim_{t\searrow 0}\decra(t,(f_n)) = \sup_n\|f_n\|_\infty.$
\item[(ii)] For any $t>0$ and any $\alpha\in \mathbb{C}$, $\decra(t,(f_n)) = \decra(t,(f_n^*)) = \decra(t,(|f_n|))$, and $\decra(t,\alpha(f_n)) = |\alpha|\decra(t,(f_n)).$
\item[(iii)] If $a_n\geq b_n\geq 0$ for each $n$, then  $\decra(t,(a_n))\geq\decra(t,(b_n))$ for any $t>0.$ Moreover if $(a_n^\alpha)\subseteq(\tM_+)^{\mathbb{N}}$ is a net 
increasing to $(a_n)$ in $\tM^{\mathbb{N}}$, then $\sup_\alpha\decra(t,(a^\alpha_n))=\decra(t,(a_n)).$
\item[(iv)] For any $s,t>0$, $\decra((t+s),(a_n+b_n)) \leq \decra(t,(a_n)) + \decra(s,(b_n)).$
\item[(v)] For any $s,t>0$, $\decra(t+s,(a_nb_n))\leq \decra(t,(a_n))\decra(s,(b_n)).$ We have, in particular, 
$\decra(t,(a_nf_nb_n))\leq (\sup_n\|a_n\|)(\sup_n\|b_n\|)\decra(t,(f_n)).$
\item[(vi)] Let $\Phi:[0,\infty)\to [0,\infty]$ be a non-decreasing function which is continuous on $[0,b_\Phi]$ where $b_\Phi=\sup\{t\in[0,\infty)\colon\Phi(t)<\infty\}.$ 
If $(\Phi(|f_n|))$ is again in $\tM^\mathbb{N}$, then $\Phi(\decra(t,(f_n)) = \decra(t,(\Phi(|f_n|)))$ for any $t \geq 0.$
\end{enumerate}
\end{proposition}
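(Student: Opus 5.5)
The plan is to reduce every item to the corresponding single-operator property in \cite[Proposition 5.12]{GLbook}, using only the defining formula $\decra(t,(f_n))=\sup_n\decra(t,f_n)$. Items (ii), (iv), (v) and (vi) are pointwise statements in $t$. Each one is obtained by applying the single-operator inequality or identity to each $n$ and then taking $\sup_n$.

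For (ii), take suprema of $\decra(t,f_n)=\decra(t,f_n^*)=\decra(t,|f_n|)$ and $\decra(t,\alpha f_n)=|\alpha|\decra(t,f_n)$.

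For (iv), write $\decra(t+s,a_n+b_n)\leq\decra(t,a_n)+\decra(s,b_n)\leq \decra(t,(a_n))+\decra(s,(b_n))$ and take the supremum over $n$ on the left. Item (v) is handled in the same way. Its particular case comes from $\decra(t,a_nf_nb_n)\leq\|a_n\|\|b_n\|\decra(t,f_n)$.

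For (vi), the single-operator identity $\Phi(\decra(t,f_n))=\decra(t,\Phi(|f_n|))$ reduces the claim to showing $\sup_n\Phi(\decra(t,f_n))=\Phi(\sup_n\decra(t,f_n))$. Since $\Phi$ is non-decreasing, the supremum passes through $\Phi$ provided $\Phi$ is left continuous at the point $\sup_n\decra(t,f_n)$. This holds by continuity on $[0,b_\Phi]$. If the supremum exceeds $b_\Phi$, then some $\decra(t,f_n)>b_\Phi$ and both sides are $\infty$. The equality case at $b_\Phi$ is covered by continuity on the closed interval.

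For (iii), monotonicity is immediate from $\decra(t,a_n)\geq\decra(t,b_n)$. For the net statement, the single-operator normality gives $\sup_\alpha\decra(t,a_n^\alpha)=\decra(t,a_n)$ for each $n$. Hence
$$\sup_\alpha\decra(t,(a^\alpha_n))=\sup_\alpha\sup_n\decra(t,a^\alpha_n)=\sup_n\sup_\alpha\decra(t,a_n^\alpha)=\decra(t,(a_n)),$$
since iterated suprema commute. The only point needing care is to read "increasing to $(a_n)$ in $\tM^{\mathbb{N}}$" coordinatewise, so that the single-operator result applies to each coordinate.

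Item (i) I expect to be the main obstacle. Monotonicity is clear, since a supremum of non-increasing functions is non-increasing. The limit is also easy: $\lim_{t\searrow0}\sup_n\decra(t,f_n)=\sup_n\sup_t\decra(t,f_n)=\sup_n\|f_n\|_\infty$, because both sides are suprema over $(n,t)$ and $\decra(\cdot,f_n)$ is non-increasing with $\lim_{t\searrow 0}\decra(t,f_n)=\|f_n\|_\infty$.

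Right continuity is the delicate part, because a supremum of right-continuous non-increasing functions is in general only lower semicontinuous. Non-increasing gives $\decra(t,(f_n))\geq\lim_{s\searrow t}\decra(s,(f_n))$. Lower semicontinuity of the supremum gives $\liminf_{s\to t}\decra(s,(f_n))\geq\decra(t,(f_n))$. Together these force equality from the right. To justify lower semicontinuity, fix $n$ with $\decra(t,f_n)>\decra(t,(f_n))-\epsilon$. Right continuity of $\decra(\cdot,f_n)$ then gives $\decra(s,(f_n))\geq\decra(s,f_n)>\decra(t,(f_n))-2\epsilon$ for $s$ slightly larger than $t$. This argument works when $\decra(t,(f_n))<\infty$; the infinite case is handled with arbitrarily large thresholds in place of $\decra(t,(f_n))-\epsilon$.
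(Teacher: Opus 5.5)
Your proposal is correct and follows essentially the same route as the paper. The paper also obtains (ii), (iv), (v) and (vi) directly from \cite[Proposition 5.12]{GLbook} applied coordinatewise, and proves only (i) and (iii) in detail: for right-continuity in (i) it uses your argument ($\decra(s,(f_n))\geq\decra(s,f_m)$ for each $m$, right-continuity of $\decra(\cdot,f_m)$, and monotonicity for the reverse inequality), and for the net statement in (iii) it interchanges the suprema over $\alpha$ and $n$ exactly as you do.
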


\begin{proof} \emph{Claim (i):} The claims regarding right-continuity deserve comment. Given $t>t_0$, the fact that for any fixed $m$ we have that 
$\decra(t,(f_n))\geq \decra(t,f_m)$, ensures that $\limsup_{t\searrow t_0}\decra(t,(f_n))\geq \lim_{t\searrow t_0}\decra(t,f_m)= \decra(t_0,f_m)$, and hence that 
$\limsup_{t\searrow t_0}\decra(t,(f_n))\geq \decra(t_0,(f_n))$. Conversely since $t\to \decra(t,(f_n))$ is non-increasing, we have that 
$\decra(t_0,(f_n))\geq\limsup_{t\searrow t_0}\decra(t,(f_n))$. Hence $\limsup_{t\searrow t_0}\decra(t,(f_n))= \decra(t_0,(f_n))$. A similar proof shows that 
$\liminf_{t\searrow t_0}\decra(t,(f_n))= \decra((f_n),t_0)$ whence $\lim_{t\searrow t_0}\decra(t,(f_n))= \decra(t_0,(f_n))$.

\emph{Claim (ii):} The fact that for any fixed $m$ and $\alpha$ we have that 
$\decra(t,(f_n^\alpha))\geq \decra(t,f_m^\alpha)$, ensures that $\sup_{\alpha}\decra(t,(f_n^\alpha))\geq \sup{\alpha}\decra(t,(f_m^\alpha))= \decra(t,f_m)$, and hence that 
$\sup_{\alpha}\decra(t,(f_n^\alpha))\geq \decra(t,(f_n))$. Conversely since for any fixed $m$ and $\alpha$ we have that $\decra(t,f_m)\geq\decra(t,f_m^\alpha)$, it is 
similarly clear that $\decra(t,(f_n))\geq\sup_\alpha\decra(t,(f_n^\alpha))$. Hence that claim holds.
\end{proof}

On the basis of the above proposition, we may therefore define sequential analogs of the Banach function spaces $L^{\bm{\rho}}(\M,\tau)$ by setting 
$L^{\bm{\rho}}(\M,\ell^\infty)=\{(x_n)\in \tM^{\mathbb{N}}: \decra(\cdot,(x_n))\in L^{\bm{\rho}}(\mathbb{R}_+)\}$. For any sequence $(T_n)$ of operators on from say 
$L^{\bm{\rho}_0}(\M,\tau)$ to $L^{\bm{\rho}_1}(\M,\tau)$ we may introduce the operator $\mathcal{T}_{max}$ given by $\mathcal{T}_{max}:L^{\bm{\rho}_0}(\M,\tau) \to 
L^{\bm{\rho}_1}(\M,\tau)^{\mathbb{N}}:x\mapsto (T_n(x))$. If $\mathcal{T}_{max}$ actually maps into $L^{\bm{\rho}_1}(\M,\ell^\infty)$, we say it is type 
$(L^{\bm{\rho}_0},L^{\bm{\rho}_1})$.

\subsection{Maximal operators in the special case where $T_n:f\mapsto k_n*f$ for all $f$} 

In the special case where we are dealing with sequences of the form $T^\#_n(x)=\int_Gk_n(t)\alpha_{t^{-1}}(x)\,dt$ where $T_n:f\mapsto k_n*f$ and $k_n\geq 0$ with 
$\|k_n\|_1\leq 1$, there is an easier alternative. Here we can set $k_{sup}=\sup_n k_n$ and write $T_{sup}$ for $f\mapsto k_{sup}*f$. In the case where for some increasing 
sequence of compact neighbourhoods $V_n$ we have that $k_n=\frac{1}{|V_n|}\chi_{V_n}$, the formula for $k_{sup}$ is particularly simple; specifically 
$$k_{sup}=\frac{1}{|V_1|}\chi_{V_1} +\sum_{k=1}^\infty\frac{1}{|V_{k+1}|}\chi_{V_{k+1}\backslash V_k}.$$
Since $T_n^\#$ (and similarly $T_{sup}$) is defined by the formula 
$x\to \int_Gk_n(t)\alpha_{t^{-1}}(x)\,dt$ it will clearly be positivity-preserving on any rearrangement-invariant Banach function space $L^{\bm{\rho}}$ on which it is 
defined. Since $k_n\leq k_{sup}$, that in turn ensures that if $x\geq 0$,  then $T^\#_n(x)=\int_Gk_n(t)\alpha_{t^{-1}}(x)\,dt\leq 
\int_Gk_{sup}(t)\alpha_{t^{-1}}(x)\,dt=T^\#_{sup}(x)$, whence $\|T^\#_n(x)\|_{\bm{\rho}} \leq \|T^\#_{sup}(x)\|_{\bm{\rho}}$. The operators $T^\#_n$ and $T^\#_{sup}$ are of 
course also adjoint preserving. So if $T^\#_{sup}$ is bounded on $L^{\bm{\rho}}$, then we will for any $x\in L^{\bm{\rho}}$ with $x=x^*$ and any $n$ have that 
\begin{eqnarray*}
\|T^\#_n(x)\|_{\bm{\rho}} &\leq& \frac{1}{2}(\|T^\#_n(|x|+x)\|_{\bm{\rho}}+\|T^\#_n(|x|-x)\|_{\bm{\rho}})\\
&\leq& \frac{1}{2}(\|T^\#_{sup}(|x|+x)\|_{\bm{\rho}}+\|T^\#_{sup}(|x|-x)\|_{\bm{\rho}})\\
&\leq& 2\|T^\#_{sup}\|.\|x\|_{\bm{\rho}}.
\end{eqnarray*}
Since for general $x\in L^{\bm{\rho}}$ we have that 
$$\|T^\#_n(x)\|_{\bm{\rho}} \leq \|T^\#_n(\mathrm{Re}(x))\|_{\bm{\rho}} + \|T^\#_n(\mathrm{Im}(x))\|_{\bm{\rho}}$$ 
$$= \|\mathrm{Re}(T^\#_n(x))\|_{\bm{\rho}} + \|\mathrm{Im}(T^\#_n(x))\|_{\bm{\rho}} \leq \|T^\#_n(x)\|_{\bm{\rho}}.$$
it follows that we will then in general have that $\|T^\#_n(x)\|_{\bm{\rho}} \leq 4\|T^\#_{sup}\|_{\bm{\rho}}$.

\section{A Banach principle suited to the present context}\label{S8}

The final task we have is to extract ergodic convergence results from the theory we have developed. We shall in this regard follow the strategy of \cite[5.1 \& 5.3]{dBL}. 
Throughout this subsection one shall assume the operators $T_n:L^{loc}(G)\to C(G)$ ($n\in \mathbb{N}$) to be convolution operators $f\to k_n*f$ induced by non-negative 
functions $k_n$ with $\|k_n\|_\infty\leq 1$ and $\int_Gk_n\,dt=1$. Given a sequence $(S_n)$ of operators from say $L^A(\M,\tau)$ to $M_B(\M,\tau)=M_{\varphi_B}(\M,\tau)$, the 
associated maximal operator $\mathcal{S}$ on $L^A(\M,\tau)$ is defined by $\mathcal{S}:x\mapsto (S_n(x))$. If this operator induces a bounded map 
into $M_{\varphi_B}(\M;\ell^\infty)$, we say that $\mathcal{S}$ is of weak type $(A,B)$. 

To achieve the goal of proving that there is some sort of convergence of $(T_n(f))$ on ``nice'' spaces, our strategy is the following three step programme.
\begin{enumerate} 
\item Given the weak/strong type of the operator $\mathcal{T}$, find the weak/strong type of $T^{\#}$.
\item In the domain of $T^{\#}$ computed in step (1), identify a dense subset $D$ for which some form of convergence of $(T^{\#}_nf)$ can be verified for all $f\in D$.
\item Use an appropriate version of Banach's Principle to extend the convergence of step (2) to the whole domain of $T^{\#}$.\end{enumerate}

We first establisha type of Banach principle appropriate to the context in view.

\begin{proposition}\label{P:osc-alt} Let $X$ and $Y$ be Young functions, and let $(T^\#_n)$ be a sequence of positive operators from say 
$L^X(\M,\tau)$ to $M_Y(\M,\tau)$ induced by convolution operators and let $T^\#_{sup}$ (defined earlier) also be bounded from $L_X(\M,\tau)$ to $M_Y(\M,\tau)$. (Here we are 
looking at weak type. For strong type the range space of the $T_n$'s would be $L^Y(\M,\tau)$.) If

\begin{enumerate}\item $\mathcal{T}$ is of weak-type $(X,Y)$
\item  and there is a dense subset $D\subseteq L^X$ such that for all $f\in D$,  $(T^\#_n(f))$ is Cauchy in the $\sigma(M_Y(\M,\tau),\Lambda_{Y^*}(\M,\tau))$-topology,
\end{enumerate} then is Cauchy in the $\sigma(M_Y(\M,\tau),\Lambda_{Y^*}(\M,\tau))$-topology for all $f\in L^X(\M,\tau)$.\end{proposition}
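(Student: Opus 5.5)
The plan is the standard $\varepsilon/3$ argument of the Banach principle. We need only a uniform bound on the $T^\#_n$, together with the duality between $M_Y$ and $\Lambda_{Y^*}$. Fix $f\in L^X(\M,\tau)$ and $g\in\Lambda_{Y^*}(\M,\tau)$. We must show that $(\tau(T^\#_n(f)g))_n$ is Cauchy in $\mathbb{C}$. The first step is a uniform operator bound. By the computation in the subsection on maximal operators for convolution kernels, positivity and $k_n\leq k_{sup}$ give $\|T^\#_n(x)\|_{M_Y}\leq 4\|T^\#_{sup}\|\,\|x\|_X$ for every $n$ and every $x\in L^X(\M,\tau)$. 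That computation was written for an arbitrary rearrangement invariant norm $\bm{\rho}$; here we rerun it with $L^X$ as domain and $M_Y$ as range. It uses only positivity of $T^\#_n$, adjoint preservation, the fact that $0\leq x\leq y$ implies $\|x\|_{M_Y}\leq\|y\|_{M_Y}$, and $\||x|\pm x\|_X\leq 2\|x\|_X$. Alternatively, hypothesis (1) gives the same uniform bound directly, since $\decra(t,T^\#_n(x))\leq\decra(t,(T^\#_m(x)))$ for each $n$. So $\sup_n\|T^\#_n\|_{L^X\to M_Y}=:C<\infty$.

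The second step is the duality estimate. We use the K\"othe duality between $M_Y=M_{\varphi_Y}$ and $\Lambda_{Y^*}=\Lambda_{\varphi_{Y^*}}$, whose fundamental functions satisfy $\varphi_Y(t)\varphi_{Y^*}(t)\approx t$. Together with the noncommutative H\"older inequality $|\tau(ab)|\leq\int_0^\infty\decra(s,a)\decra(s,b)\,ds$ (cf. \cite[Theorem 3.9.10]{DdPS}, as used earlier), this yields a constant $c$ with $|\tau(hg)|\leq c\|h\|_{M_Y}\|g\|_{\Lambda_{Y^*}}$ for $h\in M_Y(\M,\tau)$. To verify it, integrate by parts in $\int_0^\infty \decra(s,h)\decra(s,g)\,ds$, writing the integral against $d\varphi_{Y^*}$ of $\decra(\cdot,g)$ and bounding $\int_0^t\decra(s,h)\,ds\leq \|h\|_{M_Y}\,t/\varphi_Y(t)\approx\|h\|_{M_Y}\varphi_{Y^*}(t)$. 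This is the step needing the most care: we may first pass to an equivalent concave fundamental function, and the boundary terms must be controlled. Since only the existence of some constant matters, we will accept the known classical duality $\Lambda_{\varphi}'=M_{\varphi'}$ (up to equivalence) transported by rearrangements.

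The third step is the $\varepsilon/3$ argument itself. Given $\varepsilon>0$, choose $h\in D$ with $\|f-h\|_X<\varepsilon/(3cC\|g\|_{\Lambda_{Y^*}}+1)$. By hypothesis (2), pick $N$ such that $|\tau((T^\#_n(h)-T^\#_m(h))g)|<\varepsilon/3$ for $n,m\geq N$. Then
\begin{align*}
|\tau((T^\#_n(f)-T^\#_m(f))g)| &\leq |\tau(T^\#_n(f-h)g)|+|\tau((T^\#_n(h)-T^\#_m(h))g)|+|\tau(T^\#_m(f-h)g)|\\
&\leq 2cC\|f-h\|_X\|g\|_{\Lambda_{Y^*}}+\varepsilon/3<\varepsilon,
\end{align*}
which proves the Cauchy property. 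The main obstacle is thus the second step, the noncommutative pairing bound between $M_Y$ and $\Lambda_{Y^*}$. The rest is routine once the uniform bound of the first step is in hand; that bound already follows from the boundedness of $T^\#_{sup}$ (and is consistent with hypothesis (1)).
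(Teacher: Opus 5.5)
Your proposal is correct and follows essentially the same route as the paper: approximate $f$ by an element of $D$, control the error with a uniform $L^X\to M_Y$ bound on the $T^\#_n$ derived from positivity and $T^\#_{sup}$, and estimate the pairing using the K\"othe duality between $M_Y$ and $\Lambda_{Y^*}$. Your handling of the uniform bound is slightly cleaner than the paper's. You obtain $\sup_n\|T^\#_n\|\le 4\|T^\#_{sup}\|$ through the positive/negative and real/imaginary decomposition. The paper instead appeals to a pointwise comparison $\decra(t,T^\#_n(y))\le\decra(t,T^\#_{sup}(y))$ for arbitrary $y\in L^X$, which is only justified for $y\ge 0$.
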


\begin{proof} Here we make use of the fact that the K\"othe dual of $M(Y)$ is $\Lambda(Y^*)$. See for example the paper https://arxiv.org/abs/1802.01728 on this point. This 
also follows from \cite[Corollary 2.5.14]{BS} and the fact that since $M(\varphi)$ is the largest rearrangement Banach function space with fundamental function $\varphi$, its 
K\"othe dual $M(\varphi)'$ will then be the smallest rearrangement Banach function space with fundamental function $\varphi'$.  

The first thing we need to prove is that $\decra(t,T^\#_n(y))\leq\decra(t,T^\#_{sup}(y))$ for any $t\geq 0$ and any $y\in L^X$, and that it follows from this that 
$\|T^\#_n(y)\|_{M_Y}\leq \|T^\#_{sup}(y)\|_{M_Y}$ for any $y\in L^X$. We leave this as an exercise. Let $u\in D$, $x\in L^X\subset L^{1+\infty}(\M,\tau)$ and 
$v\in \Lambda_{Y^*}(\M,\tau)$ be given. We then have that 
\begin{eqnarray*}
\lim\sup_{n,m}\tau((T^\#_n-T^\#_m)(x),v) &\leq& \lim\sup_{n,m}\tau((T^\#_n-T^\#_m)(x-u)v))+\lim\sup_{n,m}\tau((T^\#_n-T^\#_m)(u)v)\\
&=& \lim\sup_{n,m}\tau((T^\#_n-T^\#_m)(x-u)v)\\
&\leq& \lim\sup_{n,m}\|(T^\#_n-T^\#_m)(x-u)\|_{M_Y}\|v\|_{\Lambda_{Y^*}}\\
&\leq& 8\|T^\#_{sup}(x-u)\|_{M_Y}\|v\|_{\Lambda_{Y^*}}.
\end{eqnarray*}
Here we used the fact that $u\in D$. The claim will therefore follow if we can show that for appropriate choices of $u\in D$, $\|T^\#_{sup}(x-u)\|_{1+\infty}$ can be made 
arbitrarily small. Let $\epsilon>$ be given and select $u\in D$ such that $\|x-u\|_X<\epsilon$. With $\beta$ denoting the norm of $T^\#_{sup}$ on the positive cone of 
$M_Y(\M,\tau)$, we have that $\|T^\#_{sup}(x-u)\|_{M_Y(\M,\tau)}\leq 4\beta\|x-u\|_X \leq 4\beta\epsilon$ which proves the claim.
\end{proof}

\begin{corollary}\label{P:osc-alt2} Let $X$ and $Y$ be Young functions, and let $(T^\#_n)$ be a sequence of positive operators from say $L^X(\M,\tau)$ to $M_Y(\M,\tau)$ and 
define the maximal operator $T^\#_{sup}$ on $L_X(\M,\tau)$ as above. (Here we are looking at weak type. For strong type the range space of the $T^\#_n$'s would be 
$L^Y(\M,\tau)$.) If
\begin{enumerate}\item $T^\#_{sup}$ is of weak-type $(X,Y)$
\item  and there is a dense subset $D\subseteq L^X$ such that for all $f\in D$,  $(T^\#_n(f))$ is Cauchy in the 
$\sigma(L^{1+\infty}(\M,\tau),L^{1\cap\infty}(\M,\tau))$-topology,
\end{enumerate} then $(T^\#_n(f))$ is Cauchy in the $\sigma(L^{1+\infty}(\M,\tau),L^{1\cap\infty}(\M,\tau))$-topology for all $f\in L^X(\M,\tau)$.\end{corollary}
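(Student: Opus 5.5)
The proof will follow the pattern of Proposition \ref{P:osc-alt}: an $\epsilon/3$ argument in which the maximal operator $T^\#_{sup}$ controls the tail. The one real change is the dual pairing. Here we test against $v\in L^{1\cap\infty}(\M,\tau)$ instead of $v\in\Lambda_{Y^*}(\M,\tau)$. Fix $x\in L^X(\M,\tau)$ and $v\in L^{1\cap\infty}(\M,\tau)$, and pick $u\in D$ with $\|x-u\|_X<\epsilon$. Write
$$\tau((T^\#_n-T^\#_m)(x)v)=\tau((T^\#_n-T^\#_m)(x-u)v)+\tau((T^\#_n-T^\#_m)(u)v).$$
By hypothesis (2) the second term tends to $0$ as $n,m\to\infty$. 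So the whole argument reduces to a uniform bound on the first term.

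For the first term, we first note that $M_Y(\M,\tau)$ is a rearrangement invariant space, so it sits continuously inside $L^{1+\infty}(\M,\tau)$. Its K\"othe dual $\Lambda_{Y^*}(\M,\tau)$ likewise contains $L^{1\cap\infty}(\M,\tau)$ continuously, because $L^{1\cap\infty}$ is the smallest such space. This gives a constant $c_Y$ with $\|v\|_{\Lambda_{Y^*}}\leq c_Y\|v\|_{1\cap\infty}$. By the H\"older inequality for K\"othe duals,
$$|\tau(w v)|\leq \|w\|_{M_Y}\|v\|_{\Lambda_{Y^*}}\leq c_Y\|w\|_{M_Y}\|v\|_{1\cap\infty}\qquad (w\in M_Y(\M,\tau)).$$
Next, exactly as in \S\ref{maxop}, we use the Re/Im and positive/negative part splitting together with $0\leq T^\#_n(y)\leq T^\#_{sup}(y)$ for $y\geq0$. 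This yields
$$\|T^\#_n(x-u)\|_{M_Y}\leq 4\|T^\#_{sup}\|\,\|x-u\|_X.$$
Here hypothesis (1) guarantees that $\|T^\#_{sup}\|$, the norm from $L^X$ to $M_Y$, is finite. Combining the two estimates,
$$\limsup_{n,m}|\tau((T^\#_n-T^\#_m)(x)v)|\leq 8c_Y\|T^\#_{sup}\|\,\|v\|_{1\cap\infty}\,\epsilon.$$
Since $\epsilon$ is arbitrary, $(\tau(T^\#_n(x)v))_n$ is Cauchy for every $v$. That is precisely Cauchyness in the $\sigma(L^{1+\infty},L^{1\cap\infty})$-topology.

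The main point to check is that the pairing $\tau(wv)$ with $w\in L^{1+\infty}$ and $v\in L^{1\cap\infty}$ is well defined and is dominated as above. In other words, we need the continuous embedding $L^{1\cap\infty}\hookrightarrow\Lambda_{Y^*}$ in the noncommutative setting. We will get it from the minimality of $L^{1\cap\infty}$ among noncommutative rearrangement invariant spaces, as noted in the Preliminaries.

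Should the positivity comparison for $T^\#_{sup}$ be awkward, there is a fallback. Assuming $\mathcal{T}$ maps into $M_Y(\M;\ell^\infty)$, we have $\decra(t,T^\#_n(y))\leq\decra(t,(T^\#_k(y)))$ for each $t$. This gives the same uniform bound directly, since the $M_Y$ norm only involves $\decra$.
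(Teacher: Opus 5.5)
Your argument is correct and is essentially the paper's proof: the paper also reruns the $\epsilon/3$ argument of Proposition \ref{P:osc-alt} with $v\in L^{1\cap\infty}(\M,\tau)$, controlling the tail by $\|T^\#_n(x-u)\|_{M_Y}\leq 4\|T^\#_{sup}\|\,\|x-u\|_X$. The only difference is where the embedding sits. The paper embeds the range space, $M_Y(\M,\tau)\hookrightarrow L^{1+\infty}(\M,\tau)$, and pairs $L^{1+\infty}$ with $L^{1\cap\infty}$; you embed the test space, $L^{1\cap\infty}(\M,\tau)\hookrightarrow\Lambda_{Y^*}(\M,\tau)$, and keep the $M_Y$--$\Lambda_{Y^*}$ H\"older pairing, which is just the dual form of the same estimate.
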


\begin{proof} The proof is much the same as before with the difference that one selects $v$ in $L^{1\cap\infty}$ not $\Lambda_{Y^*}(\M,\tau)$. The final step follows on 
noting that since $M_Y(\M,\tau)$ an exact interpolation space, it embeds into $L^{1+\infty}$. If $C$ is the norm of this embedding on the positive cone, one then has that 
$\|T^\#_{sup}(x-u)\|_{L^{1+\infty}(\M;\ell^\infty)}\leq C\|T^\#_{sup}(x-u)\|_{M_Y(\M;\ell^\infty)}\leq 4C\beta\|x-u\|_X \leq 4C\beta\epsilon$.
\end{proof}

\begin{definition}\label{D:D_X} Let $E_0, E_1$ be subsets of $L^{1+\infty}(\M,\tau)$ and $\mathcal{L}\subseteq L^1(G)$. Define $$D_{E_0}(E_1,\mathcal{L})= 
\{\alpha_f(x): f\in\mathcal{L},~x\in E_0\cap E_1\},$$ which is a subset of $E_0$. (Recall that $\alpha_f(x) = \int_G\alpha_{s}(x)f(s)\,ds$.) In particular, if $\mathcal{F}_0$ 
consists of those integrable functions on $G$ with support of finite measure, we shall simply write $D_{E_0}$ for $D_{E_0}(\M,\mathcal{F}_0)$.\end{definition}

We now verify the validity of step 2 in our proposed scheme.

\begin{proposition}\label{theo2}  Let $(\M,\tau,G,\alpha)$ be a quantum dynamical system and $(T_n)$ a sequence of convolution operators given by $f\mapsto k_n*f$.  Write
$k_n^*$ for the function $t\mapsto \overline{k_n(t^{-1})}\Delta(t^{-1})$. (This is the dual of $k_n$ in $L^1$.) Suppose that the sequence $\big(\int_G k_n(t)~dt\big)$
converges and that $(k_n^**\phi)$ converges weakly in $L^1$ for all $\phi\in L^1(G)$ with support of finite measure.

Then given a noncommutative monotone Riesz-Fischer space $L^{\bm{\rho}}(\M,\tau)$, the sequence $(T^{\#}_nf)$ converges in the 
$\sigma(L^{\bm{\rho}}(\M,\tau), L^{\bm{\rho}'}(\M,\tau))$ topology for every $f\in D_{L^{\bm{\rho}}}$.\end{proposition}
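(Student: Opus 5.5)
The plan is to compute $T^\#_n$ explicitly on elements of $D_{L^{\bm{\rho}}}$ and reduce the statement to the assumed weak $L^1$ convergence of $(k_n^**\phi)$ together with the convergence of $\big(\int_G k_n\,dt\big)$. As noted in the subsection on maximal operators, for convolution operators we have $T^\#_n(x)=\int_G k_n(t)\alpha_{t^{-1}}(x)\,dt$. I would confirm this first from Definition \ref{Thashdef}, by checking it on simple tensors and extending by (loc)-continuity.

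Now fix $f=\alpha_\phi(x)=\int_G\alpha_s(x)\phi(s)\,ds$ with $x\in L^{\bm{\rho}}\cap\M$ and $\phi$ integrable with support of finite measure. Then
$$T^\#_n(f)=\int_G\int_G k_n(t)\phi(s)\alpha_{t^{-1}s}(x)\,ds\,dt .$$
Substitute $u=t^{-1}s$, so $s=tu$, and use left invariance in $s$. This gives $T^\#_n(f)=\int_G\psi_n(u)\alpha_u(x)\,du=\alpha_{\psi_n}(x)$, where
$$\psi_n(u)=\int_G k_n(t)\phi(tu)\,dt.$$
Next, $\psi_n$ should be rewritten in terms of $k_n^**\phi$, exactly as in the computation in the proof of Proposition \ref{T-posop}: substitute $v=t^{-1}$ and use $d(v^{-1})=\Delta(v^{-1})\,dv$. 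I expect $\psi_n$ to be $k_n^**\phi$ up to an involution or modular factor, i.e.\ $\psi_n=(k_n^**\phi)$ or a reflected version of it. Since reflection combined with $\Delta$ is an isometry of $L^1(G)$ that preserves weak convergence, the hypothesis then gives that $(\psi_n)$ converges weakly in $L^1(G)$. Pinning down the exact form of this identity, with the correct placement of $\Delta$, is a routine but error-prone step.

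Then pair with a test element. Let $v\in L^{\bm{\rho}'}(\M,\tau)$. By Fubini, justified by $\|\alpha_u(x)\|_{\bm{\rho}}=\|x\|_{\bm{\rho}}$ and the Hölder inequality for Köthe duals,
$$\tau(T^\#_n(f)v)=\int_G\psi_n(u)\,\tau(\alpha_u(x)v)\,du .$$
The function $F(u)=\tau(\alpha_u(x)v)$ is bounded by $\|x\|_{\bm{\rho}}\|v\|_{\bm{\rho}'}$. It is measurable because it is continuous: $x\in\M$, and $u\mapsto\alpha_u(x)$ is $w^*$-continuous. Continuity requires $v$ to be predual-type; the general $v$ is handled below. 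Since $F\in L^\infty(G)=L^1(G)^*$, weak convergence of $(\psi_n)$ yields convergence of $\tau(T^\#_n(f)v)$.

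The main obstacle is the measurability and continuity of $F$ for general $v\in L^{\bm{\rho}'}$ when $v\notin L^1$. I would approximate $v$ in the $\sigma$-sense by elements $v_m=e_mve_m\in L^1\cap L^{\bm{\rho}'}$ with $\tau(e_m)<\infty$, using normality, and pass to the limit by monotone or dominated convergence, since $|F|$ is uniformly bounded. Alternatively I would use $x\in L^{\bm{\rho}}\cap\M\subset L^{1+\infty}$ and the (loc)-continuity of $u\mapsto\alpha_u(x)$.

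The hypothesis on $\int_G k_n$ enters only to guarantee a uniform bound $\sup_n\|\psi_n\|_1<\infty$. That bound is already implied by weak convergence, so this hypothesis is used only as a consistency check.
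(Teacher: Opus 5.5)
Your proposal is essentially the paper's proof. The paper also writes $f=\alpha_\phi(x)$, substitutes $s=tu$ and interchanges the integrals to get $T^\#_n(f)=\alpha_{\psi_n}(x)$ with $\psi_n=k_n^**\phi$ exactly; no reflection is needed, since $k_n\geq 0$ and the substitution $t\mapsto t^{-1}$ turns $\int_G k_n(t)\phi(tu)\,dt$ into $(k_n^**\phi)(u)$. It then pairs with $v\in L^{\bm{\rho}'}(\M,\tau)$, using only that $u\mapsto\tau(\alpha_u(x)v)$ is bounded, so weak $L^1$-convergence $\psi_n\to g$ yields $T^\#_n(f)\to\alpha_g(x)$; the hypothesis on $\int_G k_n$ is indeed never used.

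The paper does not address your concern about measurability of $u\mapsto\tau(\alpha_u(x)v)$ at all. For $x,v\geq 0$ it is settled most cleanly by the increasing approximants $v^{1/2}e_mv^{1/2}\in L^1(\M,\tau)$, with $\tau(e_m)<\infty$ and $e_m\nearrow\I$, together with normality of $\tau$, rather than by $e_mve_m$. You should also name the limit $\alpha_g(x)$ explicitly instead of stopping at convergence of the scalars $\tau(T^\#_n(f)v)$.
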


\begin{proof} Firstly note that for any $x\in L^{1+\infty}(\M,\tau)$ we have that $T^{\#}_n(x)=\int_Gk_n(s)\alpha_{s^{-1}}(x)~ds$. (Properly we here have a limit of integrals 
over compact sets increasing to $G$. This follows from the discussion at the start of section \ref{deftran} and the limiting way in which $T^{\#}$ is defined.) Now if 
$x\in D_{L^{\bm{\rho}}}$, then by assumption there must exist a $y\in L^{\infty}(\M,\tau)\cap L^{\bm{\rho}}(\M,\tau)$ and $\psi\in L^1(G)$ with support of finite measure, 
such that $x=\alpha_{\psi}(y)$. Writing $\widehat{k_n}$ for the function $t\mapsto k_n(t^{-1})$, a similar argument to the one used in Proposition \ref{T-posop} but with 
$v\in L^{\bm{\rho}'}(\M,\tau)$ shows that we have that 
\begin{eqnarray*}
T^{\#}_n(x)&=&\int_Gk_n(t)\alpha_{t^{-1}}(x)~dt\\
&=& \int_G k_n(t)\int_G \alpha_{t^{-1}s}(y)\psi(s)~ds~dt\\
&=& \int_G k_n(t)\int_G \alpha_{u}(y)\psi(tu)~du~dt\\
&=& \int_G \alpha_{u}(y)\int_G k_n(t)\psi(tu)~dt~du\\
&=& \int_G \alpha_{u}(y)\int_G k_n(t)\psi(tu)\Delta(t)~d(t^{-1})~du\\
&=& \int_G \alpha_{u}(y)\int_G k_n^*(t^{-1})\psi(tu)~d(t^{-1})~du\\
&=& \int_G \alpha_{u}(y)\int_G k_n^*(v)\psi(v^{-1}u)~dv~du\\
&=& \int_G \alpha_{u}(y)(k_n^**\psi)(u)~du\\
&=& \alpha_{k_n^**\psi}(y)
\end{eqnarray*}
Suppose that $(k_n^**\phi)$ converges weakly in $L^1$ to $g\in L^1(G)$. Let $v\in L^{\bm{\rho}'}(\M,\tau)$ be given. Since $y\in L^{\bm{\rho}}$, we clearly have that 
$|\tau(\alpha_t(y)v)|\leq \|\alpha_t(y)\|_{L^{\bm{\rho}}}\|v\|_{L^{\bm{\rho}'}}= \|y\|_{L^{\bm{\rho}}}\|v\|_{L^{\bm{\rho}'}}$ for any $v\in L^{\bm{\rho}'}(\M,\tau)$. That is 
$t\to \tau(\alpha_g(y),v)$ belongs to $L^\infty(G)$. But then 
$$\tau(T^{\#}_n(x)v)= \int_G \tau(\alpha_{u}(y)v)\int_G k_n^*(v)\psi(v^{-1}u)~dv~du\to \int_G \tau(\alpha_{u}(y)v)g(u)~du=\tau(\alpha_{g}(y)v).$$ That is $(T^{\#}_n(x))$ 
converges to $\alpha_g(y)$ in the $\sigma(L^{\bm{\rho}}(\M,\tau), L^{\bm{\rho}'}(\M,\tau))$ topology.
\end{proof}

\begin{corollary}\label{theo2-b}  Let $(\M,\tau,G,\alpha)$ be a quantum dynamical system and $(T_n)$ a sequence of convolution operators given by $f\mapsto k_n*f$.  Write 
$k_n^*$ for the function $t\mapsto \overline{k_n(t^{-1})}\Delta(t^{-1})$. (This is the dual of $k_n$ in $L^1$.) Suppose that the sequence $\big(\int_G k_n(t)~dt\big)$ 
converges and that $(k_n^**\phi)$ converges weakly in $L^1$ for all $\phi\in L^1(G)$ with support of finite measure.

Given a noncommutative monotone Riesz-Fischer space $L^{\bm{\rho}}(\M,\tau)$, the sequence $(T^{\#}_nf)$ then converges in the 
$\sigma(L^{1+\infty}(\M,\tau), L^{1\cap\infty}(\M,\tau))$ topology for every $f\in D_{L^{\bm{\rho}}}$.\end{corollary}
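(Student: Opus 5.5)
The plan is to deduce Corollary \ref{theo2-b} from Proposition \ref{theo2}, or more precisely from the computation in its proof. Everything rests on the fact that $L^{1\cap\infty}(\M,\tau)$ embeds continuously into the K\"othe dual $L^{\bm{\rho}'}(\M,\tau)$ of any noncommutative monotone Riesz-Fischer space. The reason is that $L^{\bm{\rho}'}$ is itself a rearrangement invariant space. Since $L^{1\cap\infty}$ is the smallest rearrangement invariant space, there is a constant $C$ with $\|v\|_{\bm{\rho}'}\leq C\|v\|_{1\cap\infty}$ for all $v\in L^{1\cap\infty}(\M,\tau)$. For $\mathbb{R}_+$ this is \cite[Theorem 2.6.6]{BS}, and it carries over to $\M$ via decreasing rearrangements, as in the discussion preceding \cite[\S 7.2]{GLbook}. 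Dually, $L^{\bm{\rho}}(\M,\tau)\subseteq L^{1+\infty}(\M,\tau)$ continuously.

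Next fix $f\in D_{L^{\bm{\rho}}}$, so $f=\alpha_\psi(y)$ with $y\in \M\cap L^{\bm{\rho}}$ and $\psi\in L^1(G)$ of finite-measure support. The computation in the proof of Proposition \ref{theo2} gives $T^\#_n(f)=\alpha_{k_n^**\psi}(y)$. Let $g$ be the weak $L^1$-limit of $(k_n^**\psi)$. For $v\in L^{1\cap\infty}(\M,\tau)\subseteq L^{\bm{\rho}'}(\M,\tau)$, the function $u\mapsto\tau(\alpha_u(y)v)$ is bounded by $\|y\|_{\bm{\rho}}\|v\|_{\bm{\rho}'}\leq C\|y\|_{\bm{\rho}}\|v\|_{1\cap\infty}$, because each $\alpha_u$ preserves rearrangements. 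It is also measurable, indeed continuous, by the point-(\emph{loc})/norm continuity of $u\mapsto\alpha_u(y)$ established earlier. Hence it lies in $L^\infty(G)$. Weak $L^1$ convergence then gives
$$\tau(T^\#_n(f)v)=\int_G\tau(\alpha_u(y)v)(k_n^**\psi)(u)\,du\to\int_G\tau(\alpha_u(y)v)g(u)\,du=\tau(\alpha_g(y)v).$$
This is exactly convergence of $(T^\#_nf)$ to $\alpha_g(y)$ in $\sigma(L^{1+\infty},L^{1\cap\infty})$. The pairing makes sense because $\alpha_g(y)\in L^{\bm{\rho}}\subseteq L^{1+\infty}$, and $L^{1\cap\infty}$ is the K\"othe dual of $L^{1+\infty}$.

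The main point needing care is the interchange of $\tau$ and the Bochner-type integral, $\tau(\alpha_\phi(y)v)=\int_G\tau(\alpha_u(y)v)\phi(u)\,du$, for $v\in L^{1\cap\infty}$. I would justify it by splitting $y\in\M$, with $v\in L^1$, where $\alpha_\phi(y)$ is a $\sigma$-weak integral and $v$ is a normal functional, so the identity holds by definition. The limiting definition of $T^\#$ over compact sets is handled by monotone or dominated convergence, using $\|k_n\|_1\leq 1$.
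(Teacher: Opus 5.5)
Your proposal is correct and takes the same route as the paper. The paper's proof simply reruns the computation of Proposition \ref{theo2} with the test element $v$ taken in $L^{1\cap\infty}(\M,\tau)$ instead of $L^{\bm{\rho}'}(\M,\tau)$, exactly as you do. The embedding $L^{1\cap\infty}\subseteq L^{\bm{\rho}'}$ you invoke is valid but not actually needed: since $y\in\M$ and $v\in L^1$, you already have $|\tau(\alpha_u(y)v)|\leq\|y\|_\infty\|v\|_1$.
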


\begin{proof} On choosing $v$ to be in $L^{1\cap\infty}(\M,\tau)$ rather than $L^{\bm{\rho}'}(\M,\tau)$, much the same proof will suffice to also prove this claim.
\end{proof}

We finally verify the applicability of the strategy to a large class of Orlicz spaces.

\begin{proposition}\label{theoBirk} In the setup of Proposition \ref{theo2}, suppose that $L^X(\M,\tau)$ is an Orlicz space for which the Young function $X$ satisfies 
$\Delta_2$ globally and that the maximal operator $\mathcal{T}^\#$ has weak type $(X,Y)$. Then $(T^{\#}_nf)$ will for every $f\in L^X(\M,\tau)$ converge in the 
$\sigma(M_Y(\M,\tau),\Lambda_{Y^*}(\M,\tau)$-topology.
\end{proposition}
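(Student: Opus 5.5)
The plan is to combine the Banach principle of Proposition \ref{P:osc-alt} with the convergence on the dense class supplied by Proposition \ref{theo2}. Two things have to be checked. First, we need a dense subset $D\subseteq L^X(\M,\tau)$ on which $(T^\#_n f)$ is Cauchy in the $\sigma(M_Y(\M,\tau),\Lambda_{Y^*}(\M,\tau))$-topology. Second, the convergence obtained on $D$ must be of that type. Once both are in hand, Proposition \ref{P:osc-alt} gives that $(T^\#_n f)$ is $\sigma(M_Y,\Lambda_{Y^*})$-Cauchy for every $f\in L^X(\M,\tau)$. Since the weak type hypothesis places everything in $M_Y$, the final step is to upgrade Cauchy to convergent.

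\textbf{Density of $D_{L^X}$.} Since $X$ satisfies $\Delta_2$ globally, $L^X(\M,\tau)$ is a separable-type space with order continuous norm. Hence $L^X\cap\M$ is norm dense in $L^X$, as is the span of elements $pap$ with $\tau(p)<\infty$.

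Fix $y\in L^X\cap L^\infty(\M,\tau)$. Let $(V_j)$ be a neighbourhood base of $e$ consisting of compact sets, and put $\psi_j=|V_j|^{-1}\chi_{V_j}\in\mathcal{F}_0$. Then
\[
\|\alpha_{\psi_j}(y)-y\|_X\leq |V_j|^{-1}\int_{V_j}\|\alpha_s(y)-y\|_X\,ds .
\]
So it suffices to show that $s\mapsto\alpha_s(y)$ is norm continuous into $L^X$ at $e$.

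This continuity is the step I expect to be the main obstacle. I would try the following route.
\begin{itemize}
\item For $y\in L^1\cap\M$, the map $s\mapsto\alpha_s(y)$ is $L^1$-norm continuous by the lemma in subsection \ref{vNA-gac}, and $\|\alpha_s(y)-y\|_\infty\leq 2\|y\|_\infty$.
\item Interpolating between $L^1$ and $L^\infty$, $\mu(t,\alpha_s(y)-y)$ is uniformly bounded and $\int_0^\infty\mu(t,\alpha_s(y)-y)\,dt\to0$. Hence $\mu(\cdot,\alpha_s(y)-y)\to0$ pointwise and in $L^1$, dominated by $2\|y\|_\infty\chi_{[0,N]}$ plus a small $L^1$ tail.
\item Using $\Delta_2$ and order continuity of the Luxemburg norm on the decreasing rearrangements, this yields $\|\alpha_s(y)-y\|_X\to0$.
\end{itemize}
Elements of $L^1\cap\M$ are dense in $L^X$ under $\Delta_2$, so $D_{L^X}=\{\alpha_\psi(y)\}$ is dense in $L^X$.

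\textbf{Convergence on $D$ and conclusion.} For $f=\alpha_\psi(y)\in D_{L^X}$, Proposition \ref{theo2} is invoked with $L^{\bm{\rho}}=L^Y$, or better with $L^{\bm{\rho}}=M_Y$, which is a monotone Riesz-Fischer space with K\"othe dual $\Lambda_{Y^*}$. As computed in its proof, $T^\#_nf=\alpha_{k_n^**\psi}(y)$, and this converges to $\alpha_g(y)$ in $\sigma(M_Y,\Lambda_{Y^*})$, provided $y\in M_Y$.

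To guarantee this, I would restrict $y$ further to $L^1\cap\M$. This set lies in every rearrangement invariant space, in particular in $M_Y$ and in $L^X$. The density argument above already uses only such $y$, so nothing is lost.

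Then hypothesis (2) of Proposition \ref{P:osc-alt} holds. Hypothesis (1) is the assumed weak type $(X,Y)$. Boundedness of $T^\#_{sup}$ follows from the weak type via $\mu(t,T^\#_n y)\leq\mu(t,(T^\#_m y))$. Hence $(T^\#_nf)$ is $\sigma(M_Y,\Lambda_{Y^*})$-Cauchy for all $f\in L^X$.

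Finally, $M_Y=\Lambda_{Y^*}'$ is a K\"othe dual space, hence sequentially $\sigma(M_Y,\Lambda_{Y^*})$-complete. A Cauchy sequence is norm bounded by the weak type estimate, and it defines a bounded functional on $\Lambda_{Y^*}$. This functional is represented by an element of $M_Y$, by the Fatou property and K\"othe reflexivity as in \cite{DDdP-Kothe}, and that element is the limit.
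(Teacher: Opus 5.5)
Your skeleton is the paper's: show the averages $\alpha_\psi(y)$ over shrinking neighbourhoods of $e$ are dense, get convergence on them from Proposition \ref{theo2}, and extend with Proposition \ref{P:osc-alt}. Your density argument works. The paper gets the $L^X$-convergence from the same two facts ($L^1$-continuity plus the $L^\infty$ bound) via the one-line convexity estimate $X(u)\leq \frac{X(u_0)}{u_0}u$ on $[0,u_0]$. Applying Proposition \ref{theo2} in $M_Y$ with $y\in L^1\cap\M$ is more careful than the paper, whose use of $D_{L^X}$ only yields $\sigma(L^X,(L^X)')$-convergence on the dense set.

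However, two steps you add fail as justified. First, boundedness of $T^\#_{sup}$ does \emph{not} follow from the weak type of $\mathcal{T}^\#$. Your inequality $\mu(t,T^\#_ny)\leq\mu(t,(T^\#_my)_m)$ bounds each $T^\#_n$ by the maximal operator. But for $y\geq 0$ one has $T^\#_ny\leq T^\#_{sup}y$, so boundedness of $T^\#_{sup}$ implies the weak type of $\mathcal{T}^\#$, not conversely. Concretely, for the trivial action $T^\#_n=\mathrm{Id}$ for all $n$, so $\mathcal{T}^\#$ has weak type $(X,Y)$ as soon as $L^X\hookrightarrow M_Y$. Yet $T^\#_{sup}=(\int_Gk_{sup})\,\mathrm{Id}$ is infinite whenever $k_{sup}\notin L^1(G)$. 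This happens e.g.\ for $k_n=(1-\frac1n)\chi_{[0,1]}+\frac1n\chi_{[n,n+1]}$ on $\mathbb{R}$, which meets all the hypotheses of Proposition \ref{theo2}. The fix is that the $\epsilon/3$ argument in the proof of Proposition \ref{P:osc-alt} only uses $\sup_n\|T^\#_n\|_{L^X\to M_Y}<\infty$, which your inequality does give. So rerun that argument rather than invoking the proposition through its $T^\#_{sup}$ hypothesis; the paper's proof passes over the same point.

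Second, the passage from Cauchy to convergent does not go through as written. A $\sigma(M_Y,\Lambda_{Y^*})$-Cauchy sequence defines a bounded functional on $\Lambda_{Y^*}(\M,\tau)$. But such a functional is implemented by an element of $M_Y=(\Lambda_{Y^*})'$ only if it is normal, since $(\Lambda_{Y^*})^*=(\Lambda_{Y^*})'$ requires order continuity of the norm of $\Lambda_{Y^*}$. That fails already in the paper's own example $Y=Y_{1+\infty}$: there $M_Y\simeq L^{1+\infty}$ and $\Lambda_{Y^*}\simeq L^{1\cap\infty}$, which carries singular functionals. The Fatou property and K\"othe reflexivity give $M_Y''=M_Y$ and lower semicontinuity of the norm, but say nothing about normality of a pointwise limit of normal functionals. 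The missing ingredient is a Vitali--Hahn--Saks type theorem. For instance, apply the weak sequential completeness of the predual $\M_*$ to the normal functionals $x\mapsto\tau\bigl(T^\#_n(f)\,v^{1/2}xv^{1/2}\bigr)$ with $v\in\Lambda_{Y^*}(\M,\tau)_+$, and use compressions by projections of finite trace to identify the limit. The paper's proof stops at the Cauchy conclusion of Proposition \ref{P:osc-alt}, so this step goes beyond it, but your stated justification does not establish it.
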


\begin{proof} By hypothesis, $T^\#$ is of weak type $(X,Y)$. To prove the theorem we merely need to show that under the given hypothesis the set $D_{L^X}$ is dense in $L^X$. 
Once this is done we may execute steps (2) and (3) of our three step programme by applying Proposition \ref{P:osc-alt} to the conclusion of  Proposition \ref{theo2}.

It therefore remains to show that $D_X$ is dense in $L^X(\M,\tau)$ when $X\in \Delta_2$. To prove this we first show that all the simple functions with support of finite 
trace belong to the norm closure of $D_X$, after which we note that these simple functions are dense in $X$. So let $p\in \M$ be a projection with $\tau(p)<\infty$. We need 
to show $p$ is the closure of $D_{L^X}$. By \cite[Theorem III.3.2.2]{Bla} the action $t\to\alpha_t(f)$ is point-norm continuous for all $f\in L^1(\M,\tau)$. Hence for $f=p$, 
we will have that $\alpha_t(p)\to p$ in $L^1$-norm as $t\to e$. Next pick a decreasing sequence of compact neighbourhoods $V_m$ in $G$ with intersection $e$ and with 
$\sup_{t\in V_m}\|\alpha_t(p)-p\|_1\leq\frac{1}{m}$. Now let $\psi_m(t)= \frac{1}{|V_m|}\chi_{V_m}$ ($m\in \mathbb{N}$). By definition we then have that 
$(\alpha_{\psi_m}(p))\subset D_X$. If we can show that this sequence convergences to $p$ in the $L^X$-norm, we will have that $p\in D_{L^X}$ as required. 

Firstly note that by \cite[Theorem IV.7.17]{Tak} we will have that  
\begin{eqnarray*}
\|\alpha_{\psi_m}(p)-p\|_1 &=& \left\|\int_G\alpha_t(p)\psi_m(t)\,dt - p\right\|_1\\
&=&\left\|\int_G( \alpha_t(p) - p)\psi_m(t)\,dt\right\|_1\\
&\leq& \int_G\|\alpha_t(p) - p\|_1\psi_m(t)\,dt\\
&\leq& \frac{1}{m}\int_G\psi_m(t)\,dt\\
&=& \frac{1}{m}.
\end{eqnarray*}
The above simultaneously shows that $(\alpha_{\psi_m}(p))\subset L^1(\M,\tau)$ and that $\alpha_{\psi_m}(p)\to p$ in $L^1$. Notice that for any $m$ we will again by 
\cite[Theorem IV.7.17]{Tak} have that 
\begin{eqnarray*}
\|\alpha_{\psi_m}(p)\|_\infty &=& \left\|\int_G\alpha_t(p)\psi_m(t)\,dt\right\|_\infty\\
&=& \frac{1}{|V_m|}\left\|\int_{V_m}\alpha_t(p)\,dt\right\|_\infty\\
&\leq& \frac{1}{|V_m|}\int_{V_m}\|\alpha_t(p)\|_\infty\,dt\\
&=& \|p\|_\infty=1.
\end{eqnarray*}
Thus $(\alpha_{\psi_m}(p))\subset L^{1\cap\infty}(\M,\tau)$. Therefore if also $\|\alpha_{\psi_m}(p)-p\|_\infty\to 0$ as $m\to \infty$, then $\alpha_{\psi_m}(p)\to p$ in 
$L^{1\cap\infty}(\M,\tau)$. Since $L^{1\cap\infty}$ continuously embeds into $L^X$, we will then have that $\alpha_{\psi_m}(p)\to p$ in $L^X$ as required. So suppose that 
$\|\alpha_{\psi_m}(p)-p\|_\infty\not\to 0$. By passing to a subsequence if necessary we may assume that there is some $\epsilon_0>0$ such that 
$\|\alpha_{\psi_m}(p)-p\|_\infty\geq \epsilon_0$ for each $m$. Given $t_0>0$, the convexity of $X$ ensures that $X(t)\leq \frac{X(t_0)}{t_0}t$ for every $0\leq t\leq t_0$. 
For any positive operator $x\in \M$ the Borel functional calculus now ensures that $X(x)\leq \frac{X(\|x\|_\infty)}{\|x\|_\infty}x$. If we apply these facts to the 
$|\alpha_{\psi_m}(p)-p|$s, it follows that $X(|\alpha_{\psi_m}(p)-p|)\leq \frac{X(\|\alpha_{\psi_m}(p)-p\|_\infty)}{\|\alpha_{\psi_m}(p)-p\|_infty}
|\alpha_{\psi_m}(p)-p|\leq \frac{X(2)}{\epsilon_0}|\alpha_{\psi_m}(p)-p|$. But then $\tau(X(|\alpha_{\psi_m}(p)-p|))\leq \frac{X(2)}{\epsilon_0}\|\alpha_{\psi_m}(p)-p\|_1$ 
for all $m$ ensuring that $$\tau(X(|\alpha_{\psi_m}(p)-p|))\to 0\mbox{ as }m\to\infty.$$It follows from for example \cite[Theorem 6.40 \& Corollary 6.42]{GLbook} that 
$\tau(X(|\alpha_{\psi_m}(p)-p|))=\int_0^\infty X(\decra(t,\alpha_{\psi_m}(p)-p))\,dt$ for all $m$. In other words the sequence $(\decra(t,\alpha_{\psi_m}(p)-p))$ of 
$[0,\infty]$-valued functions converge to zero with respect to the convex modulars $\bm{\rho}_X=\int_0^\infty X(\cdot)\,dt$ associated with the Orlicz space 
$L^X(\mathbb{R}_+)$. It is known that modular convergence is equivalent to norm convergence when $X\in \Delta_2$ (see for example \cite[Theorem 3.4.12]{raore}) and so this 
proves that $\|\decra(t,\alpha_{\psi_m}(p)-p)\|_X=\|\alpha_{\psi_m}(p)-p\|_X\to 0$ as $m\to \infty$. This contradicts the assumption that 
$\|\alpha_{\psi_m}(p)-p\|_\infty\not\to 0$, proving that this assumption must be false. We therefore have that $\|\alpha_{\psi_m}(p)-p\|_\infty\to 0$ must hold, which is 
enough to prove that the simple functions with support of finite trace are dense in $L^X$. 
\end{proof} 

\section{Conclusion} 

With the theory developed in this paper at our disposal we now have an effective means of proving ergodic convergence results on noncommutative Orlicz spaces. Assuming $T$ to 
be of weak type $(A,B)$, Theorems \ref{Ttildecont} and \ref{Thashcont} provide us with a mechanism for computing the allowed weak types of $T^\#$ from that of $T$. The manner 
in which that is done is to try and write down a set of Young/fundamental function inequalities that give us the required access to these theorems. So the effectiveness with 
which we can compute the weak type of $T^\#$ is in some sense dependent on our skill in writing down such inequalities. This then matches the status quo in the classical 
setting. By way of example we revisit the example presented in \cite{L-QM}, suitably adapted to the present context. We firstly remind the reader that the Young function for 
the space $L^{1+\infty}(\mathbb{R}_+)$ is given by  
$$Y_{1+\infty}(t)=\left\{\begin{array}{ll} 0 &\quad\mbox{if }0\leq t\leq 1\\ t-1 &\quad\mbox{otherwise}\end{array}\right..$$

The example presented in \cite{L-QM} focused on the space $L\log(L+1)$ generated by the Young function $A_{\log}(t)=t\log(1+t)$ whose significance for quantum physics was 
pointed out in \cite{ML-AHP}. We specifically show how convergence results may be obtained by slightly modifying the function $t\to t\log(t+1)$ to obtain a more regular Young 
function. We specifically modify the behaviour of $A_{\log}$ near 0 to try and ensure more regular behaviour. Instead of $A_{\log}$ we consider the Young function 
$A_{\log+\epsilon}$ given by $$A_{\log+\epsilon}(t)=t\log(1+\epsilon +t)\mbox{ for all }t\geq 0.$$We clearly have that $A_{\log+\epsilon}\geq A_{\log}$. Though more regular 
than $A_{\log}$, this class of Young functions nevertheless do approximate $A_{\log}$ very closely. However as was shown in \cite{L-QM} they have the advantage that they not 
only satisfy $\Delta_2$ globally, but also satisfy the so-called $\Delta'$ condition in the sense that 
$$A_{\log+\epsilon}((\tfrac{\log(1+\epsilon)}{2})st)\leq A_{\log+\epsilon}(t).A_{\log+\epsilon}(s)\mbox{ for all }s,t\geq 0$$ 
and $$(\tfrac{\log(1+\epsilon)}{2})\varphi_{A_{\log+\epsilon}}(st)\leq \varphi_{A_{\log+\epsilon}}(t).\varphi_{A_{\log+\epsilon}}(s) \mbox{ for all }s,t\geq 0.$$Besides the 
above one also has access to the inequality 
$$(\tfrac{\log(1+\epsilon)}{2})\varphi_{\log+\epsilon}(uv)\leq \varphi_{\log+\epsilon}(u).\varphi_{\log+\epsilon}(v)\mbox{ for all }u,v\geq 0$$
Armed with these inequalities one may now conclude from \ref{Thashcont} that any transferable operator $T$ of the type $f\to k*f$ which is of weak(strong) type 
$(Y_{1+\infty},A_{\log+\epsilon})$, yields an operator $T^\#$ of weak(strong) type $(A_{\log+\epsilon},Y_{1+\infty})$. Ergodic convergence results will now follow by simply 
invoking the results recorded in section \ref{S8}.

\end{document}